\newtheorem{thm}{Theorem}[section]
\newtheorem{cor}[thm]{Corollary}
\newtheorem{lem}[thm]{Lemma}
\newtheorem{prop}[thm]{Proposition}
\newtheorem*{thma}{Theorem A}
\newtheorem*{thmc}{Theorem C}
\newtheorem*{prop*}{Proposition B}
\theoremstyle{definition}
\newtheorem{defn}[thm]{Definition}
\newtheorem{rem}[thm]{Remark}
\newtheorem{exa}[thm]{Example}
\newtheorem{nota}[thm]{Notation}
\begin{document}

\title{Kähler-Einstein metrics on group compactifications}

\author{Thibaut Delcroix}

\date{}

\maketitle

\begin{abstract}
We obtain a necessary and sufficient condition of existence of a Kähler-Einstein metric on a 
$G\times G$-equivariant Fano compactification of a complex connected reductive group $G$
in terms of the associated polytope. This condition is not equivalent to the vanishing of the 
Futaki invariant. The proof relies on the continuity method and its translation into a real 
Monge-Ampère equation, using the invariance under the action of a maximal compact subgroup 
$K\times K$.
\end{abstract}

\section*{Introduction}

The question of the existence of a Kähler-Einstein metric on a toric Fano manifold, 
that is, a $n$-dimensional Fano manifold $X$ acted upon by $(\mathbb{C}^*)^n$ 
with an open dense orbit isomorphic to $(\mathbb{C}^*)^n$, 
was fully settled by Wang and Zhu in \cite{WZ04}. They showed that 
there exists a Kähler-Einstein metric on $X$ if and only if the Futaki invariant of $X$ 
vanishes. This invariant was known \cite{Mab87} to be linked with the barycenter of the polytope 
associated to $X$, 
so that the condition is equivalent to the fact that this barycenter is the origin.

We consider the existence of Kähler-Einstein metrics on a natural generalization of 
toric Fano manifolds: the bi-equivariant Fano compactifications of complex 
connected reductive groups. More precisely, given such a group $G$, we consider the 
Fano manifolds $X$ which admit a $G\times G$-action with an open dense orbit isomorphic
to $G$ as a $G\times G$-homogeneous space under left and right translations. We will call 
these, for simplicity, \emph{group compactifications}. 
To such a manifold is, as in the toric case, associated a polytope $P^+$, that encodes 
the information about the boundary $X\setminus G$ and the anticanonical line bundle. 

Let $G$ be a complex connected reductive group, choose $T$ a maximal torus in $G$. 
Let $M$ denote the group of characters of $T$, and let $\Phi\subset M$ be the root 
system of $(G,T)$. Choose a system of positive roots $\Phi^+$, denote by $2\rho$ 
the sum of positive roots, and by $\Xi$ the relative interior 
of the cone generated by $\Phi^+$. 
The polytope $P^+$ associated to $X$ is in the positive Weyl chamber in $M\otimes \mathbb{R}$.
Let $\mathrm{bar}_{DH}(P^+)$ denote the barycenter of $P^+$ with respect to the measure 
$\prod_{\alpha\in \Phi^+}\left<\alpha,p\right>^2dp$ where $dp$ is the Lebesgue measure 
on $M\otimes \mathbb{R}$ normalized by the lattice $M$.
Our main result is the following characterization of Kähler-Einstein Fano group 
compactifications in terms of the polytope $P^+$. 

\begin{thma}
\label{thma}
Let $X$ be a Fano compactification of $G$, then $X$ admits a Kähler-Einstein metric 
if and only if $\mathrm{bar}_{DH}(P^+)\in 2\rho+\Xi.$ 
\end{thma}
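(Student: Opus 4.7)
The plan is to follow, and generalize, the strategy of Wang-Zhu. Using the $K\times K$-invariance and the Cartan (KAK) decomposition of $G$, a $K\times K$-invariant Kähler potential on $X$ is determined by a Weyl-invariant function $u$ on the real Cartan $\mathfrak{a}$, and the complex Monge-Ampère operator on $X$ reduces, on the open orbit, to a real Monge-Ampère operator on the positive Weyl chamber $\mathfrak{a}^+$, weighted by the Jacobian $J(x) = \prod_{\alpha \in \Phi^+} \sinh^2 \langle \alpha, x\rangle$. The $2\rho$ shift in the statement should emerge as the exponential growth rate of $J$ at infinity (the $\sinh^2$ factors behave like $\tfrac{1}{4} e^{2\langle \alpha,\cdot\rangle}$), and the Duistermaat-Heckman measure $\prod_{\alpha\in\Phi^+} \langle \alpha, p\rangle^2 dp$ on $P^+$ should appear as the pushforward of $J(x)dx$ under the moment map $\nabla u$.

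Having set this up, I would run the continuity method $\mathrm{Ric}(\omega_t) = t\omega_t + (1-t)\omega_0$ in the space of $K\times K$-invariant potentials, $t\in [0,1)$. Openness is standard. Closedness at $t=1$ reduces, via the $K\times K$-invariant reduction and standard higher-order estimates, to a uniform $C^0$ estimate on the normalized potential. Passing to the Legendre transform $u^*$ on $P^+$, this $C^0$ estimate is equivalent to the properness of a Ding-type functional on the space of normalized convex functions on $P^+$. One shows that this functional splits into a convex/entropy piece plus a linear piece whose defining vector is $2\rho - \mathrm{bar}_{DH}(P^+)$; the hypothesis $\mathrm{bar}_{DH}(P^+) \in 2\rho + \Xi$ precisely guarantees that the linear piece dominates in every unbounded direction compatible with the geometry of $P^+$, yielding properness and hence the estimate.

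For necessity, the existence of a Kähler-Einstein metric forces the Ding functional to be bounded below on $K\times K$-invariant potentials; testing this against one-parameter families of translations in directions $v\in \overline{\Xi^\vee}$ and reading off the leading asymptotics gives $\langle \mathrm{bar}_{DH}(P^+) - 2\rho, v\rangle \geq 0$ for all such $v$, with strictness obtained from a perturbation argument. The main obstacle is the $C^0$ estimate. The genuinely new difficulty compared to the toric setting is the presence of the Weyl chamber walls inside $\mathfrak{a}$, on which the Jacobian $J$ vanishes to second order along each wall root: one must verify that $u_t$ extends smoothly across the walls (which should follow from $W$-invariance of the true potential on $\mathfrak{a}$) and that all estimates are uniform up to them. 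Controlling this boundary behaviour, and matching it correctly against the $\prod \langle \alpha, p\rangle^2$ degeneration of the Duistermaat-Heckman density on the corresponding faces of $P^+$, is what I expect to absorb the bulk of the technical work.
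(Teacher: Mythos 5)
You've got the right setup---$KAK$ reduction, the real Monge--Amp\`ere equation on $\mathfrak{a}^+$ weighted by $J=\prod\sinh^2\langle\alpha,\cdot\rangle$, the $2\rho$ shift from the exponential asymptotics of $J$, and the continuity method---but your route to the $C^0$ estimate is genuinely different from the paper's. You propose to Legendre-dualize to $P^+$ and prove coercivity of a Ding-type functional on normalized convex functions; the paper instead follows Wang--Zhu directly on the domain side: it introduces the strictly convex function $\nu_t = tu_t + (1-t)u_{\mathrm{ref}} - \ln J$ on $\mathfrak{a}^+$, proves a uniform bound on its minimum value $m_t$ via a comparison principle and John's ellipsoid theorem, and shows that under the barycenter hypothesis the minimizer $x_t$ stays bounded (the failure of which, in a limiting direction $\xi_\infty=\lim x_{t_i}/|x_{t_i}|$, contradicts the barycenter condition). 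For necessity, rather than testing a functional against translations, the paper proves a vanishing-integral identity $\int_{\mathfrak{a}^+}(\partial\nu/\partial\xi)\,e^{-\nu}\,dx=0$ by the divergence theorem (Proposition~\ref{zeroInt}) and combines it with the \emph{strict} inequality $\partial j/\partial\xi < -4\langle\rho,\xi\rangle$ on the open chamber (Lemma~\ref{j4rho}, strictness coming from $\coth>1$); this is exactly where strict membership in $2\rho+\Xi$ arises, and it should replace your hand-waved ``perturbation argument.'' Your route is viable (it anticipates later variational treatments of these manifolds), but note two caveats: since $T\subset G$ acts by automorphisms, the Ding functional is at best proper modulo this torus action, so the equivalence you invoke between properness and the $C^0$ estimate needs the Darvas--Rubinstein-type framework rather than being automatic; and the ``linear piece'' you describe is not a single linear functional that must vanish but a family of inequalities, one per ray in $\overline{\mathfrak{a}^+}$, which is exactly why the Futaki invariant can vanish here without a K\"ahler--Einstein metric existing. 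Finally, the paper's minimizer-tracking approach also delivers Theorem~C on the greatest Ricci lower bound essentially for free; and while your instinct about the Weyl-wall degeneration is correct (Lemma~\ref{translate1} and Proposition~\ref{technical} handle it), in the paper the bulk of the technical work lies in the volume estimates for $|m_t|$ and the limit computations of Propositions~\ref{limj} and~\ref{limu}, not the wall control.
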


Let us remark here that $P^+$ is related to the moment polytope of $X$ from a symplectic 
point of view, and that  $\mathrm{bar}_{DH}(P^+)$ is the barycenter with respect to the 
Duistermaat-Heckman measure on this moment polytope. 

The condition obtained is computable given an explicit polytope, and we provide in this 
article both a new example of Kähler-Einstein metric and an example of group 
compactification without any Kähler-Einstein metric. 
We refer to the author's PhD thesis \cite{DelTh}, or new preprint \cite{DelKSSV},
for more examples. 
The example that does not satisfy the condition is especially interesting because its Futaki invariant 
vanishes. 
In light of the proof of the Yau-Tian-Donaldson conjecture in the Fano case 
(\cite{CDS15a,CDS15b,CDS15c} and \cite{Tia15}), 
 and the recent work of Datar and Székelyhidi \cite{DS15}, 
it means that there is a non-trivial $G\times G$-equivariant destabilizing test configuration. 

In fact, in \cite{DelKSSV} we obtain through essentially algebraic methods 
a combinatorial criterion for equivariant K-stability of a Fano spherical 
variety. Combined with \cite{DS15} it provides a combinatorial criterion 
for the existence of Kähler-Einstein metrics on spherical manifolds. 
Group compactifications are examples of spherical varieties and the 
criterion from \cite{DelKSSV} specializes to the criterion from Theorem~A 
in this case. Thus the main result of \cite{DelKSSV} provides a broad 
generalization of Theorem~A. 
On the other hand, the criterion obtained here served as the guiding 
intuition for \cite{DelKSSV}. Furthermore, the combination of the two 
articles provide an independent, and low tech, in the sense that no 
Gromov-Hausdorff convergence result is needed, proof of the Yau-Tian-Donaldson 
conjecture for Fano group compactifications. 

It should be noted that in \cite{DelKSSV} a criterion is also obtained 
for the existence of Kähler-Ricci solitons on spherical manifolds, hence
on group compactifications. The analytic methods from 
the present article could easily be extended to the case of Kähler-Ricci 
solitons to provide another proof. For simplicity, we do not provide 
details for this case. 

It was mentioned to the author by G\'abor Székelyhidi that the example 
with no Kähler-Einstein is also interesting in the study of the 
Kähler-Ricci flow on Fano manifolds. Indeed, it follows from the work of 
Chen, Sun and Wang in \cite{CSW} that the Kähler-Ricci flow on a Fano 
manifold $X$ with no Kähler-Einstein metrics should converge to some 
Kähler-Ricci soliton on the central fiber of a special test configuration 
for $X$. It is reasonable to expect, in the situation where $X$ admits an 
action of a compact group and the initial metric is invariant, that the 
corresponding test configuration is equivariant. In our example, the 
possible central fibers are known (see \cite{AK05,AB04II}), and are all 
singular. No example of such a behavior for the Kähler-Ricci flow is known, 
so it would be interesting to study this in detail. 

Let us mention now a consequence of our result in the variational setting. 
Darvas and Rubinstein proved in \cite{DR} the equivalence between the 
existence of Kähler-Einstein metrics and various notions of properness of 
functionals on spaces of Kähler potentials, allowing to take into account 
automorphisms of the manifold. Our result provides a combinatorial 
criterion for these notions of properness. More precisely, though we refer 
to \cite{DR} for details, let $\omega$ 
be a reference Kähler form in $2\pi c_1(X)$, and let $\mathcal{H}$ denote the space 
of Kähler potentials, \emph{i.e.} functions $\psi$ on $X$ such that 
$\omega + i\partial \overline{\partial} \psi$ is still a Kähler form.
Assume $X$ is Kähler-Einstein and $\mathbb{G}=\mathrm{Aut}^0(X)$ is the 
neutral component of the automorphism group of $X$. Let $\mathbb{K}$ be 
a maximal compact subgroup of $\mathbb{G}$, assume that $\omega$ is 
$\mathbb{K}$-invariant and let $\mathcal{H}^{\mathbb{K}}$ 
be the space of $\mathbb{K}$-invariant Kähler potentials. 
Then Darvas and Rubinstein prove 
that the Mabuchi functional is proper on $\mathcal{H}$ modulo the action of 
$\mathbb{G}$, and that it is proper on $\mathcal{H}^{\mathbb{K}}$ provided 
the center of $\mathbb{G}$ is finite. 
In the case of group compactifications, the automorphism group may very 
well have a center that is not finite, as it is the case for many toric 
manifolds. It is interesting to note that the methods from \cite{DR} 
should allow to prove a statement combining the two statements recalled 
above, valid with no hypotheses on the automorphism group, namely that 
the Mabuchi functional is proper on $\mathcal{H}^{\mathbb{K}}$ modulo the 
action of the normalizer $N_{\mathbb{G}}(\mathbb{K})$ of $\mathbb{K}$ 
in $\mathbb{G}$.

Let us now describe our methods. The guiding ideas are to restrict to the open dense orbit $G$, 
and consider only $K\times K$-invariant metrics. The last part is possible because of 
Matsushima's theorem \cite{Mat57} 
asserting that the isometry group of a Kähler-Einstein metric is a maximal compact subgroup 
of the connected group of automorphisms of $X$. 
We can then combine the two by using the classical $KAK$ decomposition of reductive groups.
This decomposition identifies the quotient of $G$ by $K\times K$ with the closed positive Weyl 
chamber. Another point of view is that it translates $K\times K$-invariant geometry on $G$ to 
$W$-invariant geometry on $M\otimes \mathbb{R}$, where $W$ is the Weyl group of $\Phi$.

We used this fact in \cite{DelLCT}
to associate to any positively curved hermitian metric on the anticanonical line bundle $-K_X$
a $W$-invariant convex function, whose asymptotic behavior is controlled by the polytope $P^+$.
We use this here to translate the Kähler-Einstein equation into a real Monge-Ampère equation
on a real vector space. A key point in this process is to compute the complex Hessian of a 
$K\times K$-invariant function $\varphi$ on $G$ in terms of the function it 
determines on the positive Weyl chamber. 
This computation then provides an expression of the complex Monge-Ampère 
$\mathrm{MA}_{\mathbb{C}}(\varphi)$, which is the determinant of the complex 
Hessian. 

\begin{prop*}
Let $\varphi$ be a $K\times K$-invariant smooth function $G\longrightarrow \mathbb{R}$, 
and let $u$ denote the corresponding $W$-invariant function on $M\otimes \mathbb{R}$. 
Then for any $x$ in the interior of the positive Weyl chamber, 
\[
\mathrm{MA}_{\mathbb{C}}(\varphi)(\exp x) = \mathrm{MA}_{\mathbb{R}}(u)(x)
\prod_{\alpha\in \Phi^+}\frac{\left<\alpha,\nabla u(x) \right>^2}{\sinh^2\left<\alpha,x\right>},
\]
where $\mathrm{MA}_{\mathbb{R}}(u)$ denotes the determinant of the real Hessian of $u$.
\end{prop*}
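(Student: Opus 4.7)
The plan is to compute the complex Hessian of $\varphi$ at $p=\exp x_0$ by choosing holomorphic coordinates adapted to the root decomposition of $\mathfrak{g}$. Identifying $T_p^{1,0}G$ with $\mathfrak{g}$ via left translation, I pick a $\mathbb{C}$-basis consisting of a basis $(H_j)_{j=1}^r$ of $\mathfrak{t}$ with $H_j\in\mathfrak{a}$ together with the root vectors $(E_\alpha)_{\alpha\in\Phi}$; holomorphic coordinates near $p$ are then given by $(z_\beta)\mapsto p\exp(\sum z_\beta X_\beta)$, and I want to compute the determinant of the matrix $\bigl(\partial_{z_\beta}\partial_{\bar z_\gamma}\varphi(p)\bigr)$.

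My first move would be to prove that this matrix is block-diagonal, with one $r\times r$ torus block and one $1\times 1$ block per root. Since $\varphi$ is $K\times K$-invariant and $p\in T$ commutes with $T_K$, $\varphi$ is invariant under $T_K$-conjugation; this conjugation acts on each $z_\beta$ by a character of $T_K$ (trivial on the Cartan, $\alpha$ on $E_\alpha$), and a weight argument forces every off-block Taylor coefficient to vanish.

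For the torus block, the choice $H_j\in\mathfrak{a}$ means $\Im z_j$ corresponds to the direction $iH_j\in\mathfrak{t}_K\subset\mathfrak{k}$; $K$-invariance of $\varphi$ makes it independent of these imaginary parts, so $\partial_{z_j}\partial_{\bar z_k}\varphi$ reduces to a multiple of $\partial_{x_j}\partial_{x_k}u$, recovering $\mathrm{MA}_{\mathbb{R}}(u)(x_0)$ after normalization. For the block at a root $\alpha$, I would decompose $x_0=sH_\alpha+x_\perp$ with $x_\perp\in\ker\alpha$, so that $\exp x_\perp$ commutes with the $\mathfrak{sl}_2$-subgroup $G_\alpha\subset G$; this reduces the $KAK$ decomposition of $\exp(x_0)\exp(zE_\alpha)$ to that of the $2\times 2$ matrix $\exp(sH_\alpha)\exp(zE_\alpha)$ inside $G_\alpha$. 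Diagonalizing $M^*M$ and Taylor-expanding to second order in $z$, the $A$-component is $x_0+\delta s\cdot H_\alpha$ with $\delta s$ proportional to $|z|^2/\sinh\langle\alpha,x_0\rangle$; substituting into $u$ gives the block entry equal to $\langle\alpha,\nabla u(x_0)\rangle/\sinh\langle\alpha,x_0\rangle$ times an exponential $e^{\langle\alpha,x_0\rangle}$. Multiplying the $E_\alpha$ and $E_{-\alpha}$ contributions cancels the exponential and produces the squared factor appearing in the statement for each positive root.

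The main technical obstacle is the $SL_2$-reduction: the eigenvalue Taylor expansion is explicit but bookkeeping of normalizations (coroot versus root, Wirtinger conventions, complex basis scaling) must be tracked carefully to land on exactly the factor $\langle\alpha,\nabla u\rangle^2/\sinh^2\langle\alpha,x\rangle$. The $T_K$-block-diagonality of the first step is crucial here, because without it one would face tangled Baker--Campbell--Hausdorff corrections at second order that would make a direct computation unwieldy.
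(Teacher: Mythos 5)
Your proposal is correct in outline and follows a genuinely different route from the paper, which is worth comparing.

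The paper chooses a complex basis of $\mathfrak{g}$ that is simultaneously a real basis of $\mathfrak{k}$ (namely $e_\alpha - e_{-\alpha}$, $i(e_\alpha + e_{-\alpha})$ for $\alpha\in\Phi^+$ together with a basis of $\mathfrak{s}$), uses right translation $\exp(\sum z_j l_j)\exp(a)$ for the coordinates, and computes \emph{every} block of the complex Hessian by an explicit second-order Baker--Campbell--Hausdorff calculation combined with a Bielawski-style decomposition $\mathfrak{g}=\mathfrak{k}\oplus\mathfrak{a}\oplus\mathrm{Ad}(\exp a)\mathfrak{k}$ (Lemma~\ref{decomposition}, Lemma~\ref{UseBCH}). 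In particular, the vanishing of the off-diagonal blocks (Cartan-root and cross-root) is obtained by hand, case by case. You instead use the root-vector basis $(H_j, E_\alpha)_{\alpha\in\Phi}$ and observe that conjugation by $S=T\cap K$ fixes $\exp(x_0)$ and acts on coordinates linearly by the characters $(0,\alpha)$; since $\varphi$ is $S$-conjugation invariant, the $(1,1)$-part of its Taylor expansion at $\exp(x_0)$ must have $S$-weight zero, which kills $\partial_{z_\beta}\partial_{\bar z_\gamma}\varphi$ whenever $\beta\neq\gamma$. This one-line weight argument replaces the paper's three off-diagonal lemmas and is the main conceptual simplification in your approach. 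The torus block is then, as you note, $\tfrac14\,\mathrm{Hess}_{\mathbb{R}}(u)$ by $K$-invariance, and each root entry is obtained by commuting $\exp(x_\perp)$ past $G_\alpha\cong\mathrm{SL}_2$ (valid since $x_\perp\in\ker\alpha$ centralizes $\mathfrak{g}_{\pm\alpha}$ and $H_\alpha\in\mathfrak{a}$, which should be checked explicitly as $\theta H_\alpha=-H_\alpha$), after which the $KAK$-component in $\exp(\mathfrak{a})$ is read off from the singular values of a $2\times 2$ matrix. The second-order expansion of the eigenvalues of $M^*M$ indeed yields $\delta s\propto e^{\pm\langle\alpha,x_0\rangle}|z|^2/\sinh\langle\alpha,x_0\rangle$, and the exponentials cancel across the $E_\alpha$ and $E_{-\alpha}$ entries exactly as you predict, reproducing $\langle\alpha,\nabla u\rangle^2/\sinh^2\langle\alpha,x\rangle$. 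The normalization issues you flag (coroot $H_\alpha=\alpha^\vee$ versus the paper's convention $[e_\alpha,e_{-\alpha}]=\alpha$ under the chosen inner product, left- versus right-translation coordinates, Wirtinger factors) only affect the overall constant, which is legitimate since the Haar measure in the definition of $\mathrm{MA}_{\mathbb C}$ is itself only defined up to scale — the paper's precise constant $4^{-(r+p)}$ appears only in Corollary~\ref{MAC}. So both proofs land in the same place; yours trades the paper's uniform BCH machinery (which handles all blocks on the same footing) for a cheap structural vanishing argument plus a self-contained $\mathrm{SL}_2$ computation.
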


We use this result to obtain an expression of the potential, with respect to a Haar measure on $G$,
of the restriction to $G$ of a volume form on $X$ determined by a hermitian metric on $-K_X$, in 
terms of the convex potential of the hermitian metric.
This allows to translate the restriction of the Kähler-Einstein equation to $G$ as a real Monge-Ampère 
equation in a convex function $u$ reading:
\[
\mathrm{MA}_{\mathbb{R}}(u)\prod_{\alpha\in \Phi^+}\left<\alpha,\nabla u \right>^2
= e^{-u}J,
\]
where $J$ is the function defined by 
$J(x)=\prod_{\alpha\in \Phi^+}\sinh^2\left<\alpha,x\right>$.

Another consequence of our computation of the complex Hessian is an analytic derivation of a 
formula due to Brion \cite{Bri89} and Kazarnovskii \cite{Kaz87}
for the degree of an ample line bundle on a group compactification, however up to 
a multiplicative constant, which does not matter for our barycenters considerations.

Let us digress here to mention that the translation of the Kähler-Einstein equation into a real 
Monge-Ampère equation was already a key step in the toric case, but also in the case of the 
horospherical manifolds studied by Podest\`a and Spiro \cite{PS10}.
These manifolds, along with the group compactifications belong to the large family of spherical 
manifolds, which was highlighted by Donaldson in his survey \cite{Don08}
as an interesting family on which to study the existence of Kähler-Einstein metrics. 
We believe that methods similar to ours can be used for even more spherical manifolds.
On another direction, Donaldson suggests not only to study the existence of Kähler-Einstein 
metrics, but more generally of canonical metrics such as constant scalar curvature. 
We expect that our computation of the full complex Hessian can be used in relation with 
this problem. Note that Abreu obtained a formula for the scalar curvature of torus 
invariant Kähler metrics on toric manifolds \cite{Abr98} which led to important 
advances in the study of canonical metrics on toric manifolds.

The main difference between the case of toric or horospherical manifolds and the case of group 
compactifications here is the term $J$, which is independent on the function $u$. Indeed, the toric 
Monge-Ampère equation is just $\mathrm{MA}_{\mathbb{R}}(u)=e^{-u}$ and the horospherical 
case only adds well controlled gradient terms. 
We thus have to rework in depth the proof of Wang and Zhu to deal with the term $J$ and the 
walls of the Weyl chamber. This is how we obtained our condition, to replace the information given 
by the Futaki invariant in the toric case.

The common part with Wang and Zhu is the use of the classical results on the continuity method, 
and the study of a well chosen proper and strictly convex function $\nu_t$ defined in terms of the solutions
of the continuity method equation. In our case this function involves the term $J$. 
The key is then to interpret the absence of a priori estimates along the continuity method as the fact 
that the point at which the minimum of the function $\nu_t$ is attained is unbounded
as $t$ increases along the continuity method, then to interpret this 
as a condition on the polytope. 

The precise analysis of this behavior further allows to determine the maximal time of existence of 
solutions along the continuity method. This is called the greatest Ricci lower bound as Székelyhidi \cite{Sze11}
proved that it is also the supremum of all $t<1$ such that there exists a Kähler form in $c_1(X)$ with 
$\mathrm{Ric}(\omega)\geq t\omega$ where $\mathrm{Ric}(\omega)$ is the Ricci form of $\omega$.

\begin{thmc}
\label{thmc}
Assume there are no Kähler-Einstein metrics on the group compactification $X$ and let 
$R(X)$ be the greatest Ricci lower bound of $X$. Then 
\[
R(X)=\mathrm{sup}\left\{t<1 ~;~ 
\frac{1}{1-t}\left(2\rho -t\cdot \mathrm{bar}_{DH}(P^+)\right) \in -\Xi + P^+ \right\}.
\]
\end{thmc}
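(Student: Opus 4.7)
\noindent The plan is to adapt the proof of Theorem~A to the Aubin continuity method at each parameter $t$. By Székelyhidi's theorem, $R(X)$ is the supremum of $t<1$ for which the equation $\mathrm{Ric}(\omega_t) = t\omega_t + (1-t)\omega_0$ admits a $K\times K$-invariant solution $\omega_t \in c_1(X)$; by averaging, we may take the reference $\omega_0$ itself to be $K\times K$-invariant. Carrying over the dictionary from Theorem~A, this translates into the family of real Monge-Ampère equations
\[
\mathrm{MA}_{\mathbb{R}}(u_t)\prod_{\alpha\in\Phi^+}\langle\alpha,\nabla u_t\rangle^2 = e^{-tu_t+(1-t)f_0}\,J
\]
for $W$-invariant convex functions $u_t$ on $M\otimes\mathbb{R}$ with gradient image $P^+$, where $f_0$ is the convex potential associated to $\omega_0$. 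Openness of the continuity method and standard higher-order estimates then reduce existence at $t$ to a uniform $C^0$ bound on $u_t$, normalized by $\inf u_t = 0$.

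\noindent I would then rerun the proof of Theorem~A at each parameter $t$. The proper, strictly convex $W$-invariant test function $\nu_t$ used there (with the $t$-dependent right-hand side) has a unique minimum attained at a point $m_t$ in the closed positive Weyl chamber. A variational identity, obtained by integrating the Monge-Ampère equation against the Duistermaat-Heckman density $\prod\langle\alpha,p\rangle^2\,dp$ and applying Stokes's theorem on $P^+$, relates $m_t$ linearly to $2\rho$ and $\mathrm{bar}_{DH}(P^+)$, and a uniform $C^0$ bound for $u_t$ is equivalent to a uniform bound on $|m_t|$. Writing $y_t := \tfrac{1}{1-t}\bigl(2\rho - t\,\mathrm{bar}_{DH}(P^+)\bigr)$, the goal reduces to showing that $|m_t|$ is uniformly bounded precisely when $y_t\in -\Xi + P^+$.

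\noindent The crux is the blow-up analysis. Suppose the continuity method breaks at some $t^*<1$: along a subsequence $t_k\nearrow t^*$ one has $|m_{t_k}|\to\infty$, and after extraction $m_{t_k}/|m_{t_k}|\to\xi\in\overline{\Xi}$. Using the asymptotic $\log J(x)\sim 2\langle 2\rho,x\rangle$ deep in the positive Weyl chamber, one extracts, from the normalized $u_{t_k}$, a limiting convex function whose gradient image lies on the face of $P^+$ exposed by $\xi$; balancing leading terms in the Monge-Ampère equation along the ray $s\xi$ then forces $y_{t^*}$ onto the boundary of $-\Xi+P^+$, more precisely onto the corresponding face of its closure. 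Monotonicity of $y_t$ in $t$ yields the announced formula. The main obstacle is this last step: the interaction of the walls of $\Xi$ with the faces of $P^+$, mediated by the weight $J$, must be tracked precisely---in contrast with the toric case of Wang and Zhu, where only the overall barycenter of the polytope enters and no wall contributions appear. The summand $-\Xi$ reflects the vanishing of $J$ along the walls of the Weyl chamber, while the shift by $2\rho$ reflects the leading exponential growth of $J$ deep in it; making these cancellations sharp---so that the condition $y_{t^*}\notin -\Xi+P^+$ is not only necessary but precise---requires the full complex-Hessian computation of Proposition~B together with careful convex-analytic bookkeeping on $P^+$.
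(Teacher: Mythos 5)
Your outline gestures at the right architecture—blow-up analysis along the continuity method, a limit direction $\xi_\infty$ of the escaping minimum points, and a limit equation forcing $y_{t_\infty}$ onto a supporting hyperplane—but two steps that you present as routine are in fact where the entire difficulty lives, and one of them is misdescribed in a way that would derail the argument.

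The ``variational identity'' you invoke does not exist in the form you state. There is no identity, Stokes on $P^+$ or otherwise, that relates the minimum point of $\nu_t$ \emph{linearly} to $2\rho$ and $\mathrm{bar}_{DH}(P^+)$. What the proof actually rests on is the vanishing $\int_{\mathfrak{a}^+}\frac{\partial\nu_t}{\partial\xi}\,e^{-\nu_t}\,dx=0$ for each $\xi\in\overline{\mathfrak{a}^+}$ (Proposition~\ref{zeroInt}), obtained by the divergence theorem \emph{on the Weyl chamber} $\mathfrak{a}^+$, with boundary contributions killed because $e^{-\nu}=e^{-u}J$ vanishes on the Weyl walls and decays exponentially at infinity. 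Splitting $\nu_t=tu_t+(1-t)u_{\mathrm{ref}}+j$ and changing variables $p=\nabla u_t$ converts only the $u_t$-term into $\langle\xi,\mathrm{bar}_{DH}(2P^+)\rangle V$; the $j$-term obeys only the strict inequality $\partial j/\partial\xi<-4\langle\rho,\xi\rangle$, giving the upper bound $R(X)\leq\sup\{\ldots\}$ of Proposition~\ref{prop_upper}. The minimum point $x_t$ never appears in this identity—it enters only later through the choice $\xi=\xi_t=x_t/|x_t|$ and the concentration of $e^{-\nu_t}\,dx$ near $x_t$.

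The genuine crux, which you leave as a black box (``balancing leading terms''), is the limit equation $t_\infty\langle\mathrm{bar}_{DH}(2P^+)-4\rho,\xi_\infty\rangle=(t_\infty-1)\bigl(v(\xi_\infty)-\langle4\rho,\xi_\infty\rangle\bigr)$ of Proposition~\ref{limiteqn}. Proving it requires showing $\int\frac{\partial j}{\partial\xi_t}e^{-\nu_t}\to-\langle4\rho,\xi_\infty\rangle V$ and $\int\frac{\partial u_{\mathrm{ref}}}{\partial\xi_t}e^{-\nu_t}\to v(\xi_\infty)V$. The first demands a quantitative concentration statement: the measure $e^{-\nu_t}\,dx$ puts all but $\epsilon$ of its mass on a ball $B(x_t,\delta)$ that stays inside a translate $b\rho+\mathfrak{a}^+$ uniformly in $t$ (Lemmas~\ref{Ball} and~\ref{Domain}), \emph{and} a convexity estimate showing $\frac{\partial j}{\partial\xi_t}(x)+\langle4\rho,\xi_t\rangle=O(1/|x_t|)$ uniformly on that ball—this is where the asymptotic $\coth\to1$ deep in the chamber becomes quantitative, and it is several pages, not a one-line asymptotic of $\log J$. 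The second limit uses the same concentration together with the Lipschitz comparison of $u_{\mathrm{ref}}$ with the support function $v$. Finally, a small but revealing slip: after extraction the limit direction $\xi$ lies in $\overline{\mathfrak{a}^+}$, the closed Weyl chamber, not in $\overline{\Xi}$—the closure of the cone spanned by positive roots is the \emph{dual} cone, and keeping these two straight is exactly the bookkeeping you correctly flag as the novelty over Wang--Zhu.
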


The particular case of toric manifolds was obtained by Li \cite{Li11}. 
Indeed, in the case of a Fano toric manifold $X$, the polytope $P^+$ is the usual 
moment polytope, the root system $\Phi$ is trivial, so that $2\rho=0$, $\Xi$ 
is restricted to the origin $\{0\}$ and the Duistermaat-Heckman barycenter 
$\mathrm{bar}_{DH}(P^+)$ is just the barycenter of $P^+$ with respect to the 
Lebesgue measure. The condition in Theorem~A thus translates as Wang and Zhu's 
condition, that is, the condition that the barycenter is the origin. 
To recover Chi Li's result, let us introduce additional notations in the 
case of groups, as illustrated in Figure~\ref{GRLB}: let $A$ be
$\mathrm{bar}_{DH}(P^+)$, let $B$ be $2\rho$ and let $C$ denote the intersection 
of the boundary of $-\Xi + P^+$ with the half line starting from $A$ in the 
direction of $B$ if it exists. 
It follows from Theorem~C that the intersection exists if and only if $R(X)<1$ and 
is then the point $B + s \overrightarrow{AB}$ with $s=R(X)/(1-R(X))$.
We may then check that $|BC|/|AC|=s/(1+s)=R(X)$.
This is precisely the expression given in \cite{Li11} in the toric case, 
up to a change in notations.

\begin{figure}
\centering
\begin{tikzpicture}[scale=1]
\draw[semithick] (1.25-0.07,2.9-0.07) -- (1.25+0.07,2.9+0.07);
\draw[semithick] (1.25+0.07,2.9-0.07) -- (1.25-0.07,2.9+0.07);
\draw (1.25+0.15,2.9-0.15) node {$A$};

\draw[thick] (-2,7/2) -- (1/2,7/2) -- (2,3) -- (4,1);
\draw[dashed] (0,7/2) -- (0,0) -- (5/2,5/2);
 
\draw[semithick] (1-0.1,3) -- (1+0.1,3);
\draw[semithick] (1,3-0.1) -- (1,3+0.1);
\draw (1-0.22,3-0.15) node {$B$};

\draw[semithick] (-2,4.2) -- (1.25,2.9);

\draw (4.2,0.7) node {$\partial (-\Xi + P^+)$};
\draw (-0.4,0.2) node {$P^+$};

\draw (-1/4,7/2+0.2) node {$C$};
\end{tikzpicture}
\caption{
\label{GRLB}
Greatest Ricci lower bound}
\end{figure}
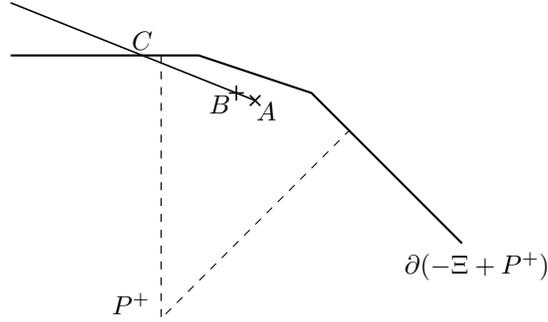

The structure of the article is as follows. 
Section~\ref{sec_CMA} is devoted to the computation of the Hessian of 
a $K\times K$-invariant function on a reductive group $G$. Proposition~B 
is obtained as a consequence in Corollary~\ref{MAC}.
Section~\ref{sec_group_comp} is a short introduction to group compactifications, 
allowing to introduce the associated polytope and relevant examples. 
The tool of convex potentials, developed in \cite{DelLCT}, is recalled here 
and applied to obtain a formula for the degree of an ample linearized line 
bundle on a group compactification. 
Section~\ref{sec_strategy} and Section~\ref{sec_m_t} provide the general strategy 
of the proofs of Theorem~A and Theorem~B, and preliminary results. 
Finally, Section~\ref{sec_obstruction} contains the proof of the necessity 
of our condition and of an upper bound on $R(X)$, and Section~\ref{sec_absence}
contains the proof that the condition is sufficient and that the upper 
bound on $R(X)$ is in fact equal to $R(X)$. 
Theorem~A is a consequence of Proposition~\ref{prop_obstruction} and 
Theorem~\ref{thm_sufficient}, and Theorem~B is a consequence of 
Proposition~\ref{prop_upper} and Section~\ref{sec_lower}.

The present article along with \cite{DelLCT} contain the main results of the author's
PhD thesis \cite{DelTh}. The author would like to thank his advisor Philippe Eyssidieux, 
and also Michel Brion for helpful discussions and the explanations on the automorphism
group of the non Kähler-Einstein example.

\section*{Notations}

Let us introduce here some notations on groups that will be used throughout 
the article. We use as reference for Lie groups the books \cite{Kna02,Hel78}, 
and \cite{Bor91} for algebraic groups.

Let $G$ be a connected complex linear reductive group.
Let $T$ be a maximal torus in $G$.
We choose a maximal compact subgroup $K$ in $G$ such that $S:=K\cap T$ is a maximal compact torus in $T$.
We denote by $\mathfrak{g}$, $\mathfrak{k}$, $\mathfrak{t}$, $\mathfrak{s}$ the respective Lie algebras
of $G$, $K$, $T$, $S$.

Let $\Phi$ denote the root system of $(G,T)$ and $W$ the corresponding Weyl group. 
We choose a system of positive roots $\Phi^+$.
We use the standard notation $\rho$ for the half sum of the positive roots.

Since $G$ is reductive, we have the \emph{Cartan decomposition} 
$\mathfrak{g}=\mathfrak{k} \oplus i\mathfrak{k}$, and 
denote by $\theta$ the corresponding Cartan involution on $\mathfrak{g}$. 
Define $\mathfrak{a} := \mathfrak{t} \cap i \mathfrak{k}$.
We identify $\mathfrak{a}$ with $N\otimes \mathbb{R}$ where $N$ is the group of one parameter 
subgroups of $T$. Denote by $\mathfrak{a}^+$ the positive \emph{open} Weyl chamber defined 
by $\Phi^+$.

Recall the \emph{$KAK$ decomposition} for reductive groups \cite[Theorem 7.39]{Kna02}: 
every element $g\in G$ can be written $g=k_1\exp(a)k_2$    
with $a\in \overline{\mathfrak{a}^+}$ uniquely determined by $g$, and $k_1,k_2 \in K$.

We can decompose $\mathfrak{a}$ in a toric part and a semisimple part: 
\[
\mathfrak{a} = \mathfrak{a}_t \oplus \mathfrak{a}_{ss},
\]
with $\mathfrak{a}_t= \mathfrak{z}(\mathfrak{g})\cap i\mathfrak{k}$ and 
$\mathfrak{a}_{ss}=\mathfrak{a}\cap [\mathfrak{g},\mathfrak{g}]$.
The Killing form of $\mathfrak{g}$, restricted to $\mathfrak{a}_{ss}$, 
gives a scalar product, and we choose a 
scalar product $\left<\cdot, \cdot\right>$ on $\mathfrak{a}$ that coincides with 
the Killing form on $\mathfrak{a}_{ss}$
and leaves $\mathfrak{a}_{ss}$ and $\mathfrak{a}_t$ orthogonal.
We use this scalar product to identify $\mathfrak{a}$ and its dual, which is also 
identified with $M\otimes \mathbb{R}$,
where $M$ is the group of characters of $T$.

\section{Complex Monge-Ampère on $G$}
\label{sec_CMA}

The aim of this section is to compute, for any $K\times K$-invariant function $\psi$ on $G$, the function 
$\mathrm{MA}_{\mathbb{C}}(\psi)$ such that 
\[
\left(\frac{i}{2\pi}\partial \overline{\partial} \psi\right)^n=\mathrm{MA}_{\mathbb{C}}(\psi)dg,
\]
where $dg$ is a Haar volume form on $G$ and $n$ is the complex dimension of $G$. 
As the notation suggests, this function is the complex Monge-Ampère
of $\psi$ in some local coordinates, \emph{i.e.} the determinant of the complex Hessian of $\psi$ 
in these coordinates. We express this Hessian and  Monge-Ampère in terms of the restriction of 
$\psi$ to $\exp(\mathfrak{a})$, using the $K\times K$-invariance and the $KAK$-decomposition.

\subsection{Complex Hessian matrix on $G$}
\label{CHess}

We first choose a special basis of $\mathfrak{g}\simeq T_eG$.
Recall the root decomposition:
\[
\mathfrak{g}=\mathfrak{t}\oplus \bigoplus_{\alpha \in \Phi} \mathfrak{g}_{\alpha}
\]
where, if $\mathrm{ad}$ denotes the adjoint representation of $\mathfrak{g}$,
\[
\mathfrak{g}_{\alpha}:= \left\{x\in \mathfrak{g} ~;~ 
\mathrm{ad}(h)(x)=\left<\alpha,h\right>x \quad \forall h \in \mathfrak{t}\right\}.
\]
We can choose a set $\{e_{\alpha}\}_{\alpha\in \Phi}$ of elements of $\mathfrak{g}$ such 
that $e_{-\alpha}=-\theta(e_{\alpha})$, $[e_{\alpha}, e_{-\alpha}]=\alpha$ and $e_{\alpha}$
generates the complex line $\mathfrak{g}_{\alpha}$ \cite[Chapter VI, Lemma 3.1]{Hel78}.

We would like to choose a complex basis of $\mathfrak{g}$ which is also a real basis of $\mathfrak{k}$.
Define, for $\alpha\in \Phi^+$,
\[
\mathfrak{k}_{\alpha}:= \left\{x\in \mathfrak{k} ~;~ 
\mathrm{ad}(h)^2(x)=\left<\alpha,h\right>^2x \quad \forall h \in \mathfrak{s}\right\}.
\]
Then each $\mathfrak{k}_{\alpha}$ is of real dimension two and we have 
$\mathfrak{g}_{\alpha}\oplus \mathfrak{g}_{-\alpha} =
\mathfrak{k}_{\alpha}\oplus i \mathfrak{k}_{\alpha}$.
We obtain a real basis of $\mathfrak{k}_{\alpha}$ by considering 
$e_{\alpha}+\theta(e_{\alpha})=e_{\alpha}-e_{-\alpha}$ and 
$ie_{\alpha}+\theta(ie_{\alpha})=ie_{\alpha}+ie_{-\alpha}$.
We complete this with a real basis of $\mathfrak{s}$ to obtain a real basis of 
$\mathfrak{k}= \mathfrak{s}\oplus \bigoplus_{\alpha\in \Phi^+}\mathfrak{k}_{\alpha}$, 
and thus a complex basis of $\mathfrak{g}$. 
We denote this complex basis of $\mathfrak{g}$ by $(l_j)_{j=1}^n$.

Since the exponential map is a biholomorphism from a neighborhood of 
$0 \in \mathfrak{g}$ to a neighborhood of the neutral element $e$ in $G$,
we get holomorphic coordinates on $G$ near $e$.
Then, using a translation on the right by $g \in G$, 
this defines holomorphic coordinates on a neighborhood of $g$.
More precisely, 
the map corresponding to the local coordinates is the map 
$\mathbb{C}^n \rightarrow G$ defined by 
\[
(z_1,\ldots, z_n) \mapsto \exp(z_1l_1+\cdots z_nl_n)g.
\]

We will compute the complex Hessian with respect to these coordinates.
If $\psi$ is a function on $G$ we denote
by $\mathrm{Hess}_{\mathbb{C}}(\psi)(g)$ the complex Hessian and
 by $\mathrm{MA}_{\mathbb{C}}(\psi)(g)$
the determinant of the complex Hessian of $\psi$ at $g$, called the complex Monge-Ampère, 
in both cases with respect to the coordinates given above. 

If $f$ is a function on $\mathfrak{a}$, then we denote by 
$\mathrm{MA}_{\mathbb{R}}(f)(x)$ the determinant of its real Hessian at $x$, and
by $\nabla f$ the gradient of $f$ with respect to the scalar product $\left<\cdot,\cdot \right>$ 
on $\mathfrak{a}$.

By the $KAK$ decomposition, a $K\times K$-invariant function $\psi$ on $G$ is completely 
determined by the function $f$ defined on $\mathfrak{a}$ by $f(x)=\psi(\exp x)$.

Let us begin by a remark that shows that even though we use a specific set of coordinates to compute the 
complex Hessian, the complex Monge-Ampère thus obtained is the potential 
of $\frac{i}{2\pi}\partial \overline{\partial} \psi$ with respect to a Haar 
measure, up to a multiplicative constant.
\begin{rem}
\label{ChoiceofHaar}
Let $dg$ be a Haar volume form on $G$ (thus a $G\times G$-invariant volume form, 
because a reductive group is unimodular \cite[Section VIII.2]{Kna02}). 
Then there exists a constant $C>0$ such that for any $K\times K$-invariant 
function $\psi$ on $G$, we have, 
\[
\left(\frac{i}{2\pi}\partial \overline{\partial} \psi\right)^n =
C \mathrm{MA}_{\mathbb{C}}(\psi) dg.
\]
Indeed, in local coordinates $z_j$, we can write
\[
\frac{i}{2\pi}\partial \overline{\partial} \psi =
\sum_{j,k} \frac{i}{2\pi} \frac{\partial^2 \psi}{\partial z_j \partial \overline{z_k}}dz_j\wedge d\overline{z_k},
\]
so in the coordinates above, we have locally 
\[
\left(\frac{i}{2\pi}\partial \overline{\partial} \psi\right)^n =
\frac{1}{(2\pi)^n} \mathrm{MA}_{\mathbb{C}}(\psi) 
(i^n dz_1\wedge d\overline{z_1}\wedge\ldots \wedge dz_n\wedge d\overline{z_n}).
\]
But by the construction of the coordinates, the global form defined locally as 
$i^n dz_1\wedge d\overline{z_1}\wedge\ldots \wedge dz_n\wedge d\overline{z_n}$
is a Haar volume form.
To conclude, remark that all Haar volume forms are positive scalar multiples of one another.
\end{rem}

\begin{thm}
Let $\psi$ be a $K\times K$ invariant function on $G$, and $f$ the associated function 
on $\mathfrak{a}$. Then in the coordinates above and for $a\in \mathfrak{a}^+$, 
the complex Hessian matrix of $\psi$ is diagonal by blocks, equal to:
\[
\mathrm{Hess}_{\mathbb{C}}(\psi)(\exp(a)) = 
\begin{pmatrix}
\frac{1}{4}\mathrm{Hess}_{\mathbb{R}}(f)(a)& 0 &  & & 0 \\
 0 & M_{\alpha_1}(a) & & & 0 \\
 0 & 0 & \ddots & & \vdots \\
\vdots & \vdots & & \ddots & 0\\
 0 & 0 &  & & M_{\alpha_p}(a)\\ 
\end{pmatrix}
\]
where the $(\alpha_i)_{i\in\{1,\ldots,p\}}$ describe the positive roots of $\Phi$ and 
$M_{\alpha}$ is defined by:
\[
M_{\alpha}(a) = \frac{1}{2}\left<\alpha , \nabla f(a)\right>
\begin{pmatrix}
\coth\left<\alpha , a\right> & i \\
-i & \coth\left<\alpha ,a\right> \\
\end{pmatrix}.
\]
\end{thm}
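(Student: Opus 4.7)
My plan is to exploit $K\times K$-invariance of $\psi$ together with the adjoint action of the compact torus $S = T\cap K$ on $\mathfrak{g}$ to obtain the claimed block-diagonal structure, and then compute each block. The heart of the proof is the $\mathfrak{k}_\alpha$-block; the rest is a structural reduction.

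The key observation is that the function $\Psi(X) := \psi(\exp(X)\exp(a))$ on $\mathfrak{g}$ is invariant under $\mathrm{Ad}(S)$: since $\mathfrak{s}$ and $\mathfrak{a}$ both lie in the abelian $\mathfrak{t}$, $\exp(s)$ commutes with $\exp(a)$ for $s\in\mathfrak{s}$, and bi-$K$-invariance gives $\psi(\exp(\mathrm{Ad}(\exp s)X)\exp(a)) = \psi(\exp(s)\exp(X)\exp(a)\exp(-s)) = \Psi(X)$. The $\mathrm{Ad}(S)$-action on $\mathfrak{g}$ has weight zero on $\mathfrak{t}$ and non-trivial character $\chi_\alpha(s) = e^{\alpha(s)}$ on each root space $\mathfrak{g}_\alpha$; any $S$-invariant Hermitian form has vanishing entries between weight spaces of different characters, so the complex Hessian of $\Psi$ at $0$ is diagonal in $\mathfrak{g} = \mathfrak{t}\oplus\bigoplus_{\alpha\in\Phi}\mathfrak{g}_\alpha$, with real diagonal entries $\lambda_\alpha$ on each one-dimensional $\mathfrak{g}_\alpha$. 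Translating this diagonality from the $(e_\alpha, e_{-\alpha})$ basis into the paper's basis $X_\alpha = e_\alpha - e_{-\alpha}$, $Y_\alpha = i(e_\alpha + e_{-\alpha})$ automatically forces each $\mathfrak{k}_\alpha$-block to take the matrix form of $M_\alpha$ in the theorem, with equal diagonal entries $\lambda_\alpha + \lambda_{-\alpha}$ and off-diagonals $\mp i(\lambda_\alpha - \lambda_{-\alpha})$; the $\pm i$ structure is forced by symmetry, and it suffices to identify the two real numbers $\lambda_\alpha \pm \lambda_{-\alpha}$ for each positive root.

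The $\mathfrak{s}$-block is immediate: for $l_j\in\mathfrak{s}$, the complex coordinate $z_j = x_j+iy_j$ parametrizes $x_j l_j+y_j(il_j)\in\mathfrak{s}\oplus\mathfrak{a}$, both commuting with $a$, so $\exp(x_j l_j + y_j il_j)\exp(a) = \exp(x_j l_j)\exp(a + y_j il_j)$; left-$K$-invariance removes the $x_j$-dependence, leaving $\psi = f(a+\sum_j y_j il_j)$ and hence $\partial^2_{z_j\bar z_k}\psi = \tfrac14\mathrm{Hess}_\mathbb{R}(f)(a)_{jk}$ for the first block. For the $\mathfrak{k}_\alpha$-block the two numbers $\lambda_\alpha \pm \lambda_{-\alpha}$ reduce to computations in the rank-one subgroup $G_\alpha\subset G$ generated by $\mathfrak{g}_{\pm\alpha}$ and $T$, essentially $SL_2(\mathbb{C})$. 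The diagonal sum $\lambda_\alpha + \lambda_{-\alpha} = \partial^2_{z\bar z}\psi(\exp(zX_\alpha)\exp(a))|_0$ vanishes on the real axis (since $\exp(zX_\alpha)\in K$) and reduces to $\tfrac14\partial^2_{yy}\psi(\exp(y\cdot iX_\alpha)\exp(a))|_0$; an explicit $SL_2(\mathbb{C})$ matrix computation of $g^*g$ for $g = \exp(y\cdot iX_\alpha)\exp(a)$ yields a hyperbolic identity of the form $\cosh\langle\alpha, a'(y)\rangle = \cosh\langle\alpha, a\rangle\cosh(y)$ (after normalization), whose Taylor expansion combined with the chain rule on $f(a'(y))$ produces the diagonal entry $\tfrac12\langle\alpha, \nabla f(a)\rangle\coth\langle\alpha,a\rangle$ of $M_\alpha$. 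The off-diagonal difference $\lambda_\alpha - \lambda_{-\alpha} = -\tfrac12\langle\alpha,\nabla f(a)\rangle$ arises from a two-variable computation whose Baker--Campbell--Hausdorff expansion of $\exp(-\bar zX_\alpha - \bar wY_\alpha)\exp(zX_\alpha + wY_\alpha)$ contributes a cross-term proportional to $\mathrm{Im}(z\bar w)\alpha$ via the bracket $[X_\alpha, Y_\alpha] = 2i\alpha$.

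The main obstacle is this explicit rank-one computation: both the $\coth\langle\alpha,a\rangle$ diagonal factor and the magnitude of the $\pm i$ off-diagonals emerge only after careful tracking of normalization conventions (the paper's $[e_\alpha, e_{-\alpha}] = \alpha$ identification and the chosen inner product on $\mathfrak{a}$) together with BCH bookkeeping. The torus symmetry has however reduced the problem to computing just two real numbers per root, which makes the task manageable once the structural invariance has been fully exploited.
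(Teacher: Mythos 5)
Your proposal is correct in substance and takes a genuinely different route from the paper. The paper uses a single uniform device throughout: the explicit decomposition $l=A+B+C$ of Lemma~\ref{decomposition}, combined with the Baker--Campbell--Hausdorff formula, gives the infinitesimal $KAK$ factorization $\exp(z_1l_1+z_2l_2)\exp(a)=k_1\exp(a+B_1+B_2+O)k_2$ of Lemma~\ref{UseBCH}, so that every Hessian entry becomes $\partial^2_{z_1\bar z_2}f(a+B_1+B_2)\big|_0$; the proof then reads off the $\mathfrak{a}$-component $B_1+B_2$ case by case in the five possible positions of $(l_1,l_2)$. You instead derive the block structure, and even the specific shape of each $\mathfrak{k}_\alpha$-block, conceptually from $\mathrm{Ad}(S)$-invariance of $X\mapsto\psi(\exp(X)\exp(a))$: the complex Hessian at $0$ is a Hermitian form invariant under the diagonal unitary group $\mathrm{Ad}(\exp\mathfrak{s})$, hence diagonal with real eigenvalues $\lambda_\alpha$ in the root basis (distinct roots, and $0$, give distinct characters of $S$), and re-expressing it in the real basis $(e_\alpha-e_{-\alpha},\,i(e_\alpha+e_{-\alpha}))$ automatically forces the $M_\alpha$ shape with diagonal $\lambda_\alpha+\lambda_{-\alpha}$ and off-diagonal $\pm i(\lambda_\alpha-\lambda_{-\alpha})$. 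This single observation replaces the paper's three vanishing cases and explains \emph{why} they vanish. For the two surviving scalars per positive root your rank-one reduction is a different calculation from the paper's: you extract the transversal curve $a'(y)$ from the eigenvalues of $g^*g$ in $G_\alpha\cong\mathrm{SL}_2(\mathbb{C})$, giving $\cosh\langle\alpha,a'(y)\rangle=\cosh\langle\alpha,a\rangle\cosh y$ and hence the $\coth$ factor after two differentiations, whereas the paper computes the $\mathfrak{a}$-component $B_2$ of the BCH correction directly; both amount to Taylor-expanding the same curve. Your off-diagonal computation, via the order-two bracket $[e_\alpha-e_{-\alpha},\,i(e_\alpha+e_{-\alpha})]=2i\alpha$, is closest in spirit to the paper's own. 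The trade-off is clear: the paper's proof is elementary and uniform but repetitive across five cases, while yours cuts the computation roughly in half and makes the block structure and the Hermitian shape of $M_\alpha$ manifest, at the cost of the change-of-basis bookkeeping and the rank-one $g^*g$ analysis. The exact constants (the $\tfrac12$ in $M_\alpha$, the sign of $\lambda_\alpha-\lambda_{-\alpha}$, the argument of $\cosh$) hinge on the normalizations $[e_\alpha,e_{-\alpha}]=\alpha$ and the resulting value of $\langle\alpha,\alpha\rangle$ under the chosen scalar product; you correctly flag this as the place where the real care is required, and carried out consistently with the paper's conventions the argument does yield $M_\alpha$ exactly.
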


As a corollary, we obtain Proposition~B:

\begin{cor}
\label{MAC}
Let $\psi$ be a $K\times K$ invariant function on $G$, and $f$ the associated function 
on $\mathfrak{a}$. Then in the coordinates above and at $a\in \mathfrak{a}^+$,
if $r$ denotes the rank of $G$ and $p$ the number of positive roots, we have
\[
\mathrm{MA}_{\mathbb{C}}(\psi)(\exp(a))= \frac{1}{4^{r+p}}
\mathrm{MA}_{\mathbb{R}}(f)(a)\frac{1}{J(a)}\prod_{\alpha \in \Phi^+} \left<\alpha,\nabla f(a)\right>^2 
\]
where we denote by $J$ the function defined on $\mathfrak{a}$ by 
\[
J(a):= \prod_{\alpha \in \Phi^+} \sinh^2\left<\alpha ,a\right>.
\]
\end{cor}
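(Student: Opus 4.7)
The plan is to deduce the corollary directly from the preceding theorem by taking the determinant of the block-diagonal complex Hessian matrix. Since determinants multiply over diagonal blocks, I would split the computation into the $r\times r$ block coming from $\mathfrak{a}$ and the $p$ two-by-two blocks $M_{\alpha}$ indexed by positive roots, and handle each factor separately.

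First, for the $\mathfrak{a}$-block $\frac{1}{4}\mathrm{Hess}_{\mathbb{R}}(f)(a)$, the determinant is immediate: pulling out the scalar gives the factor $4^{-r}$ and leaves $\mathrm{MA}_{\mathbb{R}}(f)(a)$. Next, for each positive root $\alpha$, I would compute
\[
\det M_{\alpha}(a)=\frac{1}{4}\langle\alpha,\nabla f(a)\rangle^{2}\bigl(\coth^{2}\langle\alpha,a\rangle-1\bigr),
\]
and then invoke the standard hyperbolic identity $\coth^{2}x-1=1/\sinh^{2}x$ to rewrite this as
\[
\det M_{\alpha}(a)=\frac{\langle\alpha,\nabla f(a)\rangle^{2}}{4\sinh^{2}\langle\alpha,a\rangle}.
\]

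Multiplying the $\mathfrak{a}$-block determinant by the product of the $p$ root-block determinants yields
\[
\mathrm{MA}_{\mathbb{C}}(\psi)(\exp a)=\frac{1}{4^{r}}\mathrm{MA}_{\mathbb{R}}(f)(a)\cdot\prod_{\alpha\in\Phi^{+}}\frac{\langle\alpha,\nabla f(a)\rangle^{2}}{4\sinh^{2}\langle\alpha,a\rangle},
\]
and collecting the $4$'s into a single factor $4^{-(r+p)}$, together with the definition of $J$, gives exactly the formula claimed.

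Since the genuine work lies in the theorem stating the block-diagonal form of $\mathrm{Hess}_{\mathbb{C}}(\psi)$, this corollary is essentially a one-line determinant computation; there is no real obstacle, only the need to recall the identity relating $\coth$ and $\sinh$. The only care required is to keep track of the scalar prefactors $\frac{1}{4}$ in front of the real Hessian block and $\frac{1}{2}$ in front of each $M_{\alpha}$, which is why the final power of $4$ is $r+p$ rather than some other exponent.
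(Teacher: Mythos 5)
Your proof is correct and follows essentially the same route as the paper: take the determinant of the block-diagonal complex Hessian from the preceding theorem, compute $\det M_\alpha$ using $\coth^2 x - 1 = 1/\sinh^2 x$, and collect the powers of $\tfrac14$. The paper simply states the $4^{-r}$ contribution from the $\mathfrak{a}$-block without comment and focuses on $\det M_\alpha$; you spell out both factors explicitly, but the argument is the same.
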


\begin{proof}
Since $\mathrm{MA}_{\mathbb{R}}(f)(a)=\mathrm{det}(\mathrm{Hess}_{\mathbb{R}}(f)(a))$, 
we just have to compute the determinant of $M_{\alpha}$.
This is 
\begin{align*}
\mathrm{det}(M_{\alpha}) 
& =\left(\frac{1}{2}\left<\alpha ,\nabla f(a)\right>\right)^2(\coth^2\left<\alpha ,a\right>-1)\\
& = \frac{1}{4}\left<\alpha ,\nabla f(a)\right>^2
	\frac{\cosh^2\left<\alpha ,a\right>-\sinh^2\left<\alpha ,a\right>}{\sinh^2 \left<\alpha ,a\right>}\\
& = \frac{1}{4}\left<\alpha ,\nabla f(a)\right>^2\frac{1}{\sinh^2\left<\alpha ,a\right>}
\qedhere
\end{align*}
\end{proof}

\begin{exa}
Consider the case $G=\mathrm{PSL}_2(\mathbb{C})$. Then $\mathfrak{a}^+ \simeq \mathbb{R}^*_+$, 
and there is only one positive root that we can identify with the identity on $\mathbb{R}$.
Then 
\[
\mathrm{Hess}_{\mathbb{C}}(\psi)(\exp(a)) = \frac{1}{2}
\begin{pmatrix}
f''(a)/2 & 0& 0\\
0& f'(a)\coth(a) & if'(a) \\
0& -if'(a) & f'(a)\coth(a)\\
\end{pmatrix}
\]
and the complex Monge-Ampère reads: 
\[
\mathrm{MA}_{\mathbb{C}}(\psi)(\exp(a)) = \frac{1}{16}f''(a)(f'(a))^2\frac{1}{\sinh^2(a)}.
\]
\end{exa}

The rest of the section is devoted to the proof of the theorem.
The technique of the proof is based on the work of Bielawski \cite{Bie04}. 
In particular, the idea to use the decomposition in Lemma~\ref{decomposition} and the 
Baker-Campbell-Hausdorff formula appears in this article.
We begin by introducing these two tools.

\subsection{The Baker-Campbell-Hausdorff formula}

As a formal series in the variables  $x$ and $y$, the logarithm of 
$\exp(x)\exp(y)$ is well defined. We denote this by $\mathrm{BCH}(x,y)$.
The Baker-Campbell-Hausdorff formula is the following.

\begin{prop}{\cite[Theorem X.3.1]{Hoc65}}
\label{BCH}
There exists a neighborhood $U$ of $0$ in $\mathfrak{g}$ such that for 
all $x$ and $y$ in $U$, $\mathrm{BCH}(x,y)$ is convergent and defines 
an element of $\mathfrak{g}$, and we have 
\[
\exp(x)\exp(y)=\exp(\mathrm{BCH}(x,y)).
\]
\end{prop}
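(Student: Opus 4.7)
The plan is to use the fact that $\exp$ is a local diffeomorphism near $0$ to define $Z(x,y):=\log(\exp(x)\exp(y))$ as an honest analytic $\mathfrak{g}$-valued function on a small neighborhood of $(0,0)$, and then identify its Taylor expansion with the formal series $\mathrm{BCH}(x,y)$. First I would note that $d\exp_0=\mathrm{id}_{\mathfrak{g}}$, so by the inverse function theorem there is a neighborhood $V$ of $e\in G$ on which an analytic inverse $\log:V\to \mathfrak{g}$ of $\exp$ is defined. Choosing $U$ small enough that $\exp(U)\cdot\exp(U)\subset V$, the map $Z(x,y)=\log(\exp(x)\exp(y))$ is analytic on $U\times U$ and satisfies $\exp(x)\exp(y)=\exp(Z(x,y))$ by construction.

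The next step is to show $Z$ coincides, on a possibly smaller neighborhood, with the formal Lie series $\mathrm{BCH}(x,y)$. I would do this by introducing the one-parameter family $Z(t):=\log(\exp(x)\exp(ty))$ with $Z(0)=x$ and deriving a differential equation by differentiating the identity $\exp(Z(t))=\exp(x)\exp(ty)$ with respect to $t$. Using the classical formula
\[
d\exp_{Z}(V)=dL_{\exp Z}\circ \frac{1-e^{-\mathrm{ad}\,Z}}{\mathrm{ad}\,Z}(V),
\]
one obtains
\[
\frac{1-e^{-\mathrm{ad}\,Z(t)}}{\mathrm{ad}\,Z(t)}\bigl(Z'(t)\bigr)=y,
\]
and hence, after inverting the analytic function $\frac{1-e^{-s}}{s}$ (which is invertible near $s=0$), an ODE of the form $Z'(t)=g(\mathrm{ad}\,Z(t))(y)$ with $g$ analytic and $Z(0)=x$.

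Solving this ODE iteratively by expanding $Z(t)$ as a power series in $t$ with coefficients that are Lie polynomials in $x$ and $y$ produces, by uniqueness of the analytic solution, an explicit series all of whose terms are nested commutators of $x$ and $y$. Evaluating at $t=1$ gives precisely Dynkin's explicit form of $\mathrm{BCH}(x,y)$. Thus on $U\times U$ the analytic function $Z(x,y)$ equals the formal series, proving both its convergence to an element of $\mathfrak{g}$ and the identity $\exp(x)\exp(y)=\exp(\mathrm{BCH}(x,y))$.

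The main obstacle is the derivation and inversion of the $d\exp$ formula above: one needs to justify the series $\frac{1-e^{-\mathrm{ad}\,Z}}{\mathrm{ad}\,Z}$ and its inverse as convergent operators on $\mathfrak{g}$ for $Z$ in a neighborhood of $0$. This is handled by fixing a norm $\|\cdot\|$ on $\mathfrak{g}$ with $\|[x,y]\|\le C\|x\|\|y\|$, so that $\|\mathrm{ad}\,Z\|\le C\|Z\|$ and entire functions of $\mathrm{ad}\,Z$ are controlled by the corresponding scalar series. The same estimate gives the a priori convergence bound on the Dynkin series, of the order $\sum_n C'^n(\|x\|+\|y\|)^n/n$, ensuring convergence on any neighborhood $U$ with sufficiently small radius. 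All remaining verifications—that $Z$ takes values in $\mathfrak{g}$ (automatic by construction from $\log$), and that the two expressions agree (by ODE uniqueness)—are then routine.
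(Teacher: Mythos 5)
Your argument is correct, but note that the paper does not prove this statement at all: it is quoted verbatim as a classical result, with a citation to Hochschild \cite[Theorem X.3.1]{Hoc65}, and the only thing the paper subsequently uses is the low-order expansion $\mathrm{BCH}(x,y)=x+y+\tfrac12[x,y]+O$. What you have written is a standard self-contained proof: define $Z(x,y)=\log(\exp x\exp y)$ via the local inverse of $\exp$ (legitimate since $d\exp_0=\mathrm{id}$), derive the ODE $\bigl(\tfrac{1-e^{-\mathrm{ad}\,Z(t)}}{\mathrm{ad}\,Z(t)}\bigr)(Z'(t))=y$ for $Z(t)=\log(\exp x\exp(ty))$ from the differential-of-$\exp$ formula, invert the analytic function $\tfrac{1-e^{-s}}{s}$ near $s=0$, and solve iteratively to see that the Taylor coefficients of $Z$ are universal Lie polynomials agreeing with the formal series, with convergence controlled by a submultiplicative norm on $\mathfrak{g}$. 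This correctly delivers both assertions of the proposition, including the point (needed for ``defines an element of $\mathfrak{g}$'') that the homogeneous components are Lie elements, which your ODE iteration produces automatically. One small over-claim: saying the iteration yields ``precisely Dynkin's explicit form'' requires an additional combinatorial computation that you do not need; for the statement at hand it suffices that the analytic solution's coefficients coincide with the formal $\mathrm{BCH}$ coefficients by universality and uniqueness of the analytic solution, so you could drop the reference to Dynkin without loss. In short, your route proves from scratch what the paper simply imports, at the cost of the $d\exp$ formula and the norm estimates; the paper's choice buys brevity, yours buys self-containedness.
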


Furthermore we know explicitly the terms of $\mathrm{BCH}(x,y)$. We will only use
the following:
\[
\mathrm{BCH}(x,y)=x+y+\frac{1}{2}[x,y]+O
\]
where $O$ denotes terms of order higher than 2 in $x$ and $y$.

\subsection{A decomposition in $\mathfrak{g}$}

Let $a\in \mathfrak{a}^+$. Let $\mathrm{Exp}(\mathrm{ad}(a))$ be the linear application 
$\mathfrak{g} \rightarrow \mathfrak{g}$ defined by 
\[
\mathrm{Exp}(\mathrm{ad}(a))(x)=\sum_{n=0}^{\infty}\frac{\mathrm{ad}(a)^n(x)}{n!}.
\]
Recall that $G$ acts on $\mathfrak{g}$ through the adjoint action $\mathrm{Ad}$, and 
that we have the general relation 
\[
\mathrm{Exp}(\mathrm{ad}(a))(x)=\mathrm{Ad}(\exp a)(x)
\]
for $x\in \mathfrak{g}$.

\begin{lem}
\label{decomposition}
Let $l\in  \mathfrak{g}$ and $a\in \mathfrak{a}^+$. Then 
\begin{enumerate}
\item there exists $A \in \mathfrak{k}$, $B\in \mathfrak{a}$ 
and $C\in \mathrm{Ad}(\exp a)\left(\bigoplus_{\alpha\in \Phi^+}\mathfrak{k}_{\alpha}\right)$ such that 
\[
l=A+B+C;
\]
\item if $l\in \bigoplus_{\alpha \in \Phi} \mathfrak{g}_{\alpha}$ then $B=0$;
\item if $l\in \mathfrak{k}_{\alpha}$, and $l'$ denotes $\mathrm{ad}(a)(il)/\left<\alpha ,a\right>$, 
then $l'\in \mathfrak{k}_{\alpha}$ and the decomposition above for the element 
$il \in \mathfrak{g}_{\alpha} \oplus \mathfrak{g}_{-\alpha}$ reads
\[
il= -(\cosh\left<\alpha , a\right>) l'+\frac{1}{\sinh(\left<\alpha , a\right>)}\mathrm{Ad}(\exp a)(l');
\]
\item if $l=e_{\alpha}+\theta(e_{\alpha})$ then $l'=ie_{\alpha}-i\theta(e_{\alpha})$;
\item if $l=ie_{\alpha}-i\theta(e_{\alpha})$ then $l'=-e_{\alpha}-\theta(e_{\alpha})$.
\end{enumerate}
\end{lem}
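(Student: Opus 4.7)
The plan is to exploit the $\mathrm{ad}(\mathfrak{t})$-eigenspace structure of $\mathfrak{g}$ throughout. First I would fix the ambient direct sum
\[
\mathfrak{g} = \mathfrak{s}\oplus \mathfrak{a}\oplus \bigoplus_{\alpha\in\Phi^+}\bigl(\mathfrak{g}_\alpha\oplus\mathfrak{g}_{-\alpha}\bigr),
\]
noting that $\mathfrak{s}\subset \mathfrak{k}$, that $\mathfrak{a}$ is the natural home for the $B$-component, and that each factor $\mathfrak{g}_\alpha\oplus\mathfrak{g}_{-\alpha}$ coincides as a real four-dimensional space with $\mathfrak{k}_\alpha\oplus i\mathfrak{k}_\alpha$. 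Part~(1) then reduces to showing that inside this four-dimensional space the two real two-dimensional subspaces $\mathfrak{k}_\alpha$ and $\mathrm{Ad}(\exp a)(\mathfrak{k}_\alpha)$ are transverse for every $a\in\mathfrak{a}^+$. Part~(2) is automatic once this is done, since any element of $\bigoplus_\alpha\mathfrak{g}_\alpha$ has no $\mathfrak{t}$-component, a fortiori no $\mathfrak{a}$-component.

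For the transversality I would work in the real basis $\{e_\alpha-e_{-\alpha},\,i(e_\alpha+e_{-\alpha})\}$ of $\mathfrak{k}_\alpha$ and use $\mathrm{Ad}(\exp a)(e_{\pm\alpha})=e^{\pm\langle\alpha,a\rangle}e_{\pm\alpha}$, which follows from $\mathrm{ad}(a)(e_{\pm\alpha})=\pm\langle\alpha,a\rangle e_{\pm\alpha}$. A direct computation shows that $\mathrm{Ad}(\exp a)$ maps $\mathfrak{k}_\alpha$ to a plane whose projection to $i\mathfrak{k}_\alpha$ parallel to $\mathfrak{k}_\alpha$ is multiplication by $\sinh\langle\alpha,a\rangle$. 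Since $\alpha\in\Phi^+$ and $a\in\mathfrak{a}^+$ force $\sinh\langle\alpha,a\rangle\neq 0$, the two planes span everything, and any element of $\mathfrak{g}_\alpha\oplus\mathfrak{g}_{-\alpha}$ has a unique decomposition as a sum of a $\mathfrak{k}_\alpha$-part and a $\mathrm{Ad}(\exp a)(\mathfrak{k}_\alpha)$-part.

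For parts~(4)--(5) the calculation is immediate: starting from $l=e_\alpha+\theta(e_\alpha)=e_\alpha-e_{-\alpha}$, one gets $\mathrm{ad}(a)(il)=i\langle\alpha,a\rangle(e_\alpha+e_{-\alpha})$, so $l'=i(e_\alpha+e_{-\alpha})=ie_\alpha-i\theta(e_\alpha)\in\mathfrak{k}_\alpha$; the symmetric argument handles~(5). For part~(3), I would write $il$ and $\mathrm{Ad}(\exp a)(l')$ in the coordinates $(e_\alpha,e_{-\alpha})$ and solve the resulting $2\times 2$ linear system for the coefficients that express $il$ in the decomposition $\mathfrak{k}_\alpha\oplus \mathrm{Ad}(\exp a)(\mathfrak{k}_\alpha)$; the hyperbolic identity $e^{\pm\langle\alpha,a\rangle}=\cosh\langle\alpha,a\rangle\pm\sinh\langle\alpha,a\rangle$ then produces the announced coefficients.

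The only substantive step is thus the transversality in~(1): every other assertion is either a direct-sum bookkeeping (part~(2)) or a $2\times 2$ linear-algebra computation (parts~(3)--(5)). The transversality itself amounts to observing that a single determinant, essentially $\sinh^2\langle\alpha,a\rangle$, is non-zero on the open Weyl chamber, so this is a mild rather than a severe obstacle.
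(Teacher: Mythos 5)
Your argument is correct and, at bottom, relies on the same computation as the paper: the action of $\mathrm{Ad}(\exp a)$ on $e_{\pm\alpha}$ by $e^{\pm\langle\alpha,a\rangle}$, rewritten in hyperbolic functions. Where you differ is the organization. The paper works ``bottom up'': it first computes $l'$ and the decomposition explicitly on the two basis vectors of $\mathfrak{k}_\alpha$ (your parts (4)--(5) and the formula in (3)), then extends by linearity to $i\bigoplus\mathfrak{k}_\alpha$, then to $\bigoplus_\alpha\mathfrak{g}_\alpha$, and finally handles $\mathfrak{t}=\mathfrak{s}\oplus\mathfrak{a}$ separately, so that existence in part (1) is obtained only at the end by explicit construction. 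You instead prove existence and uniqueness of the decomposition first, reducing to the transversality of $\mathfrak{k}_\alpha$ and $\mathrm{Ad}(\exp a)(\mathfrak{k}_\alpha)$ inside the real $4$-dimensional block $\mathfrak{g}_\alpha\oplus\mathfrak{g}_{-\alpha}$, with the nondegeneracy of $\sinh\langle\alpha,a\rangle$ as the single hypothesis that carries the load; you then recover the explicit coefficients by solving the resulting $2\times2$ system. This cleanly separates the existence question from the formula, and makes visible exactly where $a\in\mathfrak{a}^+$ is used, at the small cost of then having to redo the same $2\times2$ linear algebra that the paper does anyway. Both routes are fine; yours is arguably the more transparent framing, though not shorter. (One small caveat unrelated to your reasoning: if you actually carry out the $2\times2$ solve, the coefficient of $l'$ comes out as $-\coth\langle\alpha,a\rangle$, not $-\cosh\langle\alpha,a\rangle$ as printed in the statement of the lemma; the paper itself uses $\coth$ consistently in the applications, so the lemma statement appears to contain a typo that your derivation would detect.)
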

In the statement, the result is more and more precise as we know more precisely the element 
considered. In the proof we will begin by the very precise case of the elements of the basis and work 
our way up by linearity.

\begin{proof}
Let $a\in \mathfrak{a}^+$.
We begin by the two last points.
By definition of $l'$, we have, if $l=e_{\alpha}+\theta(e_{\alpha})$, 
\begin{align*}
l'      & =\mathrm{ad}(a)(il)/\left<\alpha , a\right>\\
 	& = \mathrm{ad}(a)(ie_{\alpha})/\left<\alpha , a\right>
		+ \mathrm{ad}(a)(i\theta(e_{\alpha}))/\left<\alpha , a\right>\\
	& = ie_{\alpha}-i\theta(e_{\alpha})\\
\intertext{and if $l=ie_{\alpha}-i\theta(e_{\alpha})$,}
l' & =\mathrm{ad}(a)(il)/\left<\alpha , a\right> \\
	& = \mathrm{ad}(a)(-e_{\alpha})/\left<\alpha , a\right>
		+\mathrm{ad}(a)(\theta(e_{\alpha}))/\left<\alpha , a\right>\\
	& = -e_{\alpha}-\theta(e_{\alpha}).
\end{align*}
In particular, in both cases, $l'$ is in $\mathfrak{k}_{\alpha}$.
By linearity this is also true of $l'$ for any $l\in \mathfrak{k}_{\alpha}$.

To prove the decomposition in the third point, it suffices to compute that, 
using the definition of $\mathfrak{k}_{\alpha}$,
\begin{align*}
\mathrm{Exp}(\mathrm{ad}(a))(l') 
& =(\cosh \left<\alpha , a\right>)l' + (\sinh \left<\alpha , a\right>)il \\
&=\mathrm{Ad}(\exp a)(x).
\end{align*}

Then by linearity the first point holds true for any 
$l \in i \bigoplus_{\alpha \in  \Phi^+} \mathfrak{k}_{\alpha}$, with $B=0$.
But we have
\[ 
\bigoplus_{\alpha \in  \Phi} \mathfrak{g}_{\alpha} =
\bigoplus_{\alpha \in  \Phi^+} \mathfrak{k}_{\alpha} 
\oplus i \bigoplus_{\alpha \in  \Phi^+} \mathfrak{k}_{\alpha}, 
\]
so we have the decomposition for any 
$l\in \bigoplus_{\alpha \in  \Phi} \mathfrak{g}_{\alpha}$,
with $B=0$.

Finally for $l \in \mathfrak{t}$, it suffices to decompose $l$ along 
$\mathfrak{t}=\mathfrak{s}\oplus \mathfrak{a}$.
By linearity and the root decomposition, we obtain the proposition for any 
$l\in \mathfrak{g}$.
\end{proof}

\subsection{Using the Baker-Campbell-Hausdorff formula}

We want to compute the complex Hessian of $\psi$ in the chosen system of coordinates, at a point 
$\exp (a)$ for $a$ in the open Weyl chamber $\mathfrak{a}^+$.
If $l_1$ and $l_2$ are two vectors in the chosen basis of $\mathfrak{k}$, we thus want to compute:
\[
H_{l_1,l_2}(a):= \left. \frac{\partial^2}{\partial z_1 \partial \overline{z_2}} \right|_{z_1,z_2=0}
\psi(\exp(z_1l_1 + z_2l_2) \exp(a)).
\]

There are different cases, according to the subspaces where $l_1$ and $l_2$ lie.
We will first describe the part of the argument that is used in all cases, which relies on 
the Baker-Campbell-Hausdorff formula, and then deal with each case separately.

Using the decomposition from Lemma~\ref{decomposition} on $z_1l_1+z_2l_2$ 
we can write 
\[
z_1l_1+z_2l_2 = A_1+B_1+C_1
\]
with $A_1\in \mathfrak{k}$, $B_1\in \mathfrak{a}$ 
and $C_1\in \mathrm{Ad}(\exp a)(\mathfrak{k})$, 
and all are of homogeneous degree one in $z_1$ and $z_2$.
Let 
\[
D_1 = \frac{1}{2}([B_1,A_1]+[C_1,A_1]+[C_1,B_1]),
\] 
it is of order two in $z_1$ and $z_2$.

Let us now use again Lemma~\ref{decomposition} to get 
\[
D_1 = A_2+B_2+C_2.
\]
with $A_2\in \mathfrak{k}$, $B_2\in \mathfrak{a}$ 
and $C_2\in \mathrm{Ad}(\exp a)(\mathfrak{k})$, 
and all are of homogeneous degree two in $z_1$ and $z_2$.

The Baker-Campbell-Hausdorff formula allows to prove the following lemma,
which can be seen as an explicit infinitesimal $KAK$ decomposition.
Note that to lighten the notations we do not write the dependence on $z_1$, $z_2$,
but all the terms defined above $A_j$, $B_j$, $C_j$ and $D_1$ are in fact functions
of these two complex variables. 

\begin{lem}
\label{UseBCH}
We can write 
\[
\exp(z_1l_1 + z_2l_2) \exp(a) = k_1 \exp(B_1+B_2+a+O) k_2
\]
where $k_1, k_2 \in K$, and $O$ denotes terms of order greater than two in $z_1$ and $z_2$.
\end{lem}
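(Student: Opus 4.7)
The plan is to apply the Baker–Campbell–Hausdorff formula several times in order to rewrite the product $\exp(z_1 l_1 + z_2 l_2)\exp(a)$ in the desired $K \cdot \exp(\mathfrak{a}) \cdot K$ shape, keeping careful track of all terms of order $\geq 3$ in $z_1, z_2$ (denoted $O$ throughout). All maneuvers are essentially bookkeeping on orders of the $z_j$'s, the key algebraic inputs being Lemma~\ref{decomposition} (to split elements into $\mathfrak{k}$, $\mathfrak{a}$, and $\mathrm{Ad}(\exp a)(\mathfrak{k})$ parts) and BCH (Proposition~\ref{BCH}).

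First I would use BCH twice to expand $\exp(A_1)\exp(B_1)\exp(C_1)$. A direct computation gives
\[
\exp(A_1)\exp(B_1)\exp(C_1) = \exp\!\bigl(A_1+B_1+C_1+\tfrac{1}{2}([A_1,B_1]+[A_1,C_1]+[B_1,C_1])+O\bigr),
\]
and the quadratic commutator term is exactly $-D_1$ by definition of $D_1$. Inverting this (again via BCH, noting that $D_1$ is second order so $[A_1+B_1+C_1,D_1]$ is third order) yields
\[
\exp(A_1+B_1+C_1) = \exp(A_1)\exp(B_1)\exp(C_1)\exp(D_1)\exp(O).
\]
Now substitute $D_1 = A_2+B_2+C_2$. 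Because $A_2, B_2, C_2$ are each of order $2$, their pairwise commutators are of order $4$, so BCH gives $\exp(D_1)=\exp(A_2)\exp(B_2)\exp(C_2)\exp(O)$.

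Next I would reorder the resulting six-factor product so that like types are grouped together. Each transposition of an order-$2$ factor past an order-$1$ factor produces only an $O$ correction (since the swap introduces a commutator of order $\geq 3$), and combining $\exp(A_1)\exp(A_2)=\exp(A_1+A_2)\exp(O)$, likewise for the $B$'s and $C$'s, one arrives at
\[
\exp(A_1+B_1+C_1) = \exp(A_1+A_2)\exp(B_1+B_2)\exp(C_1+C_2)\exp(O).
\]
Multiplying on the right by $\exp(a)$, I use the two key identities: $\exp(B_1+B_2)\exp(a)=\exp(B_1+B_2+a)$ (since $\mathfrak{a}$ is abelian), and $\exp(C_1+C_2)\exp(a)=\exp(a)\exp(C_1'+C_2')$, where $C_i = \mathrm{Ad}(\exp a)(C_i')$ with $C_i' \in \mathfrak{k}$ (this is exactly the content of the third part of Lemma~\ref{decomposition}, together with $\exp \circ \mathrm{Ad}(\exp a) = \mathrm{Int}(\exp a)\circ \exp$).

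Finally, I would push the remaining $\exp(O)$ factor leftward to sit inside the middle exponential. Conjugating $\exp(O)$ past $\exp(C_1'+C_2')$ or past $\exp(a)$ preserves the $O$-order (conjugation by a fixed element, or by an element depending on the $z_j$'s of order $\geq 0$, sends order-$3$ terms to order-$3$ terms). Then $\exp(B_1+B_2+a)\exp(O) = \exp(B_1+B_2+a+O)$ by one last BCH application, since $[B_1+B_2+a, O]$ is again of order $\geq 3$. Setting $k_1 = \exp(A_1+A_2) \in K$ and $k_2 = \exp(C_1'+C_2') \in K$ produces the claimed decomposition.

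The main obstacle is not any single algebraic identity but the careful accounting of $O$-errors as they migrate through each BCH manipulation and each reordering step; every intermediate factorization creates new third-order corrections, and one must verify at each stage that these can be absorbed either into a neighboring factor or eventually into the central $\mathfrak{a}$-exponential without disturbing the $K$-factors.
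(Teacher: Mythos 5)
Your proof is correct and takes a genuinely different route from the paper's, though both rest on the same two ingredients: the decomposition of Lemma~\ref{decomposition} and the BCH formula (Proposition~\ref{BCH}). The paper proceeds by iterative peeling: starting from the single exponential $\exp(A_1+B_1+C_1)$, it multiplies on the left by $\exp(-A_1)$ and on the right by $\exp(-C_1)$ to strip off the first-order $\mathfrak{k}$- and $\mathrm{Ad}(\exp a)(\mathfrak{k})$-parts, then repeats with $\exp(-A_2)$ and $\exp(-C_2)$, always keeping the intermediate result a single exponential of an $\mathfrak{a}$-element plus errors. You instead work forward, factoring $\exp(A_1+B_1+C_1)$ as $\exp(A_1)\exp(B_1)\exp(C_1)\exp(D_1)\exp(O)$, splitting $\exp(D_1)$ similarly, and reordering the six resulting factors; this relies on the (correct) observation that transposing an order-two past an order-one exponential costs only an order-three conjugation correction, and that the accumulated $\exp(O)$-factors can be funneled to one end. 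Your $k_2=\exp(C_1'+C_2')$ differs from the paper's $k_2=\exp(-a)\exp(C_2)\exp(C_1)\exp(a)$, but both lie in $K$, which is all that is needed. One small caveat you inherit directly from the paper: both proofs finish by applying BCH to a product $\exp(\text{element near }a)\exp(O)$ where the first factor is \emph{not} near the identity, so Proposition~\ref{BCH} as stated does not literally apply. The gap is easily closed by noting that, for $a\in\mathfrak{a}^+$, $\mathrm{ad}(a)$ has only real eigenvalues, hence $\exp$ is a local diffeomorphism near $a$ and the order-$\geq 3$ error can be absorbed into the exponent; but this remark is missing from both arguments.
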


\begin{proof}
We begin by applying Proposition~\ref{BCH} to 
$\exp(-A_1)\exp(A_1+B_1+C_1)$, and get that this is equal to 
\[
\exp\left(B_1+C_1+[-A_1,B_1+C_1]/2+O_1\right),
\]
where $O_1$ denotes terms of order greater than 2 in $z_1$ and $z_2$.

Then we multiply on the right by $\exp(-C_1)$ and get, with Proposition~\ref{BCH}
again, 
\[
\exp\left(B_1+[-A_1,B_1+C_1]/2+[B_1,-C_1]/2+O_2\right),
\]
where $O_2$ denotes terms of order greater than 2 in $z_1$ and $z_2$.
By definition of $D_1$, we have proved 
\[
\exp(-A_1)\exp(z_1l_1 + z_2l_2)\exp(-C_1)= \exp\left( B_1+D_1+O_2\right).
\]

Recall that $D_1=A_2+B_2+C_2$, and that all of these are of degree two in $z_1$ and $z_2$.
We apply another time the Proposition~\ref{BCH}, to 
$\exp(-A_2)\exp(B_1+D_1+O_2)$, but here we only need to use the first term 
in the development of $\mathrm{BCH}$. We might say that $A_2$ commutes 
up to order two with elements of degree greater or equal to one in $z_1$, $z_2$.
We get 
\[
\exp(-A_2)\exp(B_1+D_1+O_2)=\exp(B_1+B_2+C_2+O_3),
\]
where $O_3$ denotes terms of order greater than 2 in $z_1$ and $z_2$.

One further use of the Baker-Campbell-Hausdorff formula yields
\[
\exp(-A_2)\exp(B_1+D_1+O_2)\exp(-C_2)=\exp(B_1+B_2+O_4),
\]
where $O_4$ denotes terms of order greater than 2 in $z_1$ and $z_2$.

Consider now $\exp(C_2)\exp(C_1)$.
Since  $C_1, C_2 \in \mathrm{Ad}(\exp a)(\mathfrak{k})$, we have 
\[
\exp(C_2)\exp(C_1)=\exp(a)k_2\exp(-a)
\]
for some $k_2\in K$.
On the other hand, we have $k_1:=\exp(A_1)\exp(A_2)\in K$.

Summing up we have proved that 
\[
\exp(z_1l_1 + z_2l_2)=k_1\exp(B_1+B_2+O_4)\exp(a)k_2\exp(-a).
\]
But then 
\[
\exp(z_1l_1 + z_2l_2)\exp(a)=k_1\exp(B_1+B_2+O_4)\exp(a)k_2,
\]
and one last application of Proposition~\ref{BCH} gives the lemma, because 
$B_1, B_2$ and $a$ commute:
\[
\exp(z_1l_1 + z_2l_2) \exp(a) = k_1 \exp(B_1+B_2+a+O) k_2
\]
where $O$ denotes terms of order greater than 2 in $z_1$ and $z_2$.
\end{proof}

\begin{lem}
\label{psiaf}
We have 
\[
H_{l_1,l_2}(a)=
\left. \frac{\partial^2}{\partial z_1 \partial \overline{z_2}} \right|_{0}
f(a+B_2+B_1).
\]
\end{lem}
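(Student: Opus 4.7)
The plan is to read off the statement as essentially a corollary of Lemma~\ref{UseBCH} combined with the $K\times K$-invariance of $\psi$, with a small Taylor-expansion check to discard the higher-order remainder.

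First, I would apply Lemma~\ref{UseBCH} and use that $\psi(k_1 g k_2) = \psi(g)$ for $k_1, k_2 \in K$. This gives
$$\psi(\exp(z_1 l_1 + z_2 l_2)\exp(a)) = \psi(\exp(a + B_1 + B_2 + O)),$$
where $O$ denotes terms of order at least three in $(z_1, z_2)$.

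Next, I would argue that $O$ does not contribute to $\partial^2/\partial z_1 \partial \overline{z_2}|_{0}$. Indeed, composing the smooth map $X\mapsto \psi(\exp X)$ (defined on a neighborhood of $a$ in $\mathfrak{g}$) with the smooth map $(z_1,z_2)\mapsto a+B_1+B_2+O$ and Taylor-expanding at $(z_1,z_2)=0$, only the bidegree-$(1,1)$ part in $(z_1,\overline{z_2})$ of the output matters; this part depends only on the terms of total order at most two of the inner map. Hence $O$ can be dropped.

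Finally, since $B_1$ and $B_2$ take values in $\mathfrak{a}$ and vanish at $(0,0)$, the point $a+B_1+B_2$ stays in the open Weyl chamber $\mathfrak{a}^+$ for $(z_1,z_2)$ in a small enough neighborhood of the origin, so $\psi(\exp(a+B_1+B_2))=f(a+B_1+B_2)$ by definition of $f$. Combining the three observations yields the lemma. The main obstacle is really just the second step: the Taylor-expansion bookkeeping needed to formalize the informal ``order'' notation used throughout the section; once this is set up, the statement is a direct consequence of Lemma~\ref{UseBCH}.
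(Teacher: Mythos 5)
Your argument is correct and follows the paper's proof almost verbatim: Lemma~\ref{UseBCH} plus $K\times K$-invariance, drop the higher-order remainder under $\partial^2/\partial z_1\partial\overline{z_2}|_0$, then identify $\psi\circ\exp$ with $f$ on $\mathfrak{a}$. The only superfluous step is your check that $a+B_1+B_2$ stays in $\mathfrak{a}^+$: since $f$ is defined by $f(x)=\psi(\exp x)$ for all $x\in\mathfrak{a}$, it suffices that $B_1,B_2\in\mathfrak{a}$, which is how the paper phrases it.
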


\begin{proof}
We first use $K\times K$-invariance of $\psi$ and Lemma~\ref{UseBCH} to write 
\[
\psi(\exp(z_1l_1+z_2l_2)\exp(a))=\psi(\exp(a+B_1+B_2+O)).
\]

Then 
\begin{align*}
H_{l_1,l_2}(a)&=\left. \frac{\partial^2}{\partial z_1 \partial \overline{z_2}} \right|_{z_1,z_2=0}
\psi(\exp(z_1l_1 + z_2l_2) \exp(a))\\
& = \left. \frac{\partial^2}{\partial z_1 \partial \overline{z_2}} \right|_{0}
\psi(\exp(a+B_1+B_2+O))\\
\intertext{because $O$ is of order greater than two, this becomes}
& = \left. \frac{\partial^2}{\partial z_1 \partial \overline{z_2}} \right|_{0}
\psi(\exp(a+B_1+B_2))\\
\intertext{since $a+B_1+B_2 \in \mathfrak{a}$, this is}
& = \left. \frac{\partial^2}{\partial z_1 \partial \overline{z_2}} \right|_{0}
f(a+B_1+B_2)
\qedhere
\end{align*}
\end{proof}

It remains to determine $B_1+B_2$ for all coefficients of the Hessian, and then to compute 
the coefficient. 
For that, since we want to reduce to real coordinates, we recall that 
if $z_1=x_1+iy_1$ and $z_2=x_2+iy_2$ then 
\[ 
\frac{\partial^2}{\partial z_1 \partial \overline{z_2}} =
\frac{1}{4}\left( \frac{\partial^2}{\partial x_1 \partial x_2} + 
\frac{\partial^2}{\partial y_1 \partial y_2}\right) + 
\frac{i}{4}\left(\frac{\partial^2}{\partial x_1 \partial y_2} - 
\frac{\partial^2}{\partial y_1 \partial x_2} \right).
\]

\subsection{Determining $H_{l_1,l_2}(a)$}

\begin{lem}
Assume $l_1,l_2 \in \mathfrak{s}$. Then 
$H_{l_1,l_2}(a)$ is the corresponding coefficient of $\mathrm{Hess}_{\mathbb{R}}(f)(a)/4$: 
\[
H_{l_1,l_2}(a) = 
\frac{1}{4}\left. \frac{\partial^2}{\partial y_1 \partial y_2} \right|_{0}
f(a+y_1il_1+y_2il_2).
\]
\end{lem}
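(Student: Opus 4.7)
The plan is to apply Lemma~\ref{psiaf} and then identify the decomposition of $z_1 l_1 + z_2 l_2$ from Lemma~\ref{decomposition} very explicitly in this case. The key structural fact I would use at the outset is that, since $\mathfrak{s}$ is the Lie algebra of the maximal compact torus $S = K \cap T$, one has $\mathfrak{t} = \mathfrak{s} \oplus i\mathfrak{s}$ and therefore $i\mathfrak{s} = \mathfrak{a}$. In particular $\mathfrak{s} \subset \mathfrak{k}$ and $i\mathfrak{s} \subset \mathfrak{a}$.

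Writing $z_j = x_j + i y_j$, I would split $z_1 l_1 + z_2 l_2 = (x_1 l_1 + x_2 l_2) + (y_1 i l_1 + y_2 i l_2)$ and observe that the first summand lies in $\mathfrak{s} \subset \mathfrak{k}$ while the second lies in $i\mathfrak{s} = \mathfrak{a}$. Comparing with the decomposition $z_1 l_1 + z_2 l_2 = A_1 + B_1 + C_1$ of Lemma~\ref{decomposition}, uniqueness of the $KAK$-infinitesimal pieces gives
\[
A_1 = x_1 l_1 + x_2 l_2, \qquad B_1 = y_1 (i l_1) + y_2 (i l_2), \qquad C_1 = 0.
\]

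Next I would compute $D_1$. Since $C_1 = 0$, only the bracket $[B_1, A_1]$ survives; but $A_1 \in \mathfrak{s} \subset \mathfrak{t}$ and $B_1 \in \mathfrak{a} \subset \mathfrak{t}$, and $\mathfrak{t}$ is abelian, so $[B_1, A_1] = 0$. Hence $D_1 = 0$, and consequently $A_2 = B_2 = C_2 = 0$. Plugging into Lemma~\ref{psiaf} yields
\[
H_{l_1,l_2}(a) = \left. \frac{\partial^2}{\partial z_1 \partial \overline{z_2}} \right|_{0} f\bigl(a + y_1 (i l_1) + y_2 (i l_2)\bigr).
\]

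Finally I would apply the Wirtinger identity recalled just before the lemma. Because the argument of $f$ depends only on $y_1, y_2$ and not on $x_1, x_2$, all of $\partial^2/\partial x_1 \partial x_2$, $\partial^2/\partial x_1 \partial y_2$ and $\partial^2/\partial y_1 \partial x_2$ vanish, leaving
\[
H_{l_1,l_2}(a) = \frac{1}{4}\left. \frac{\partial^2}{\partial y_1 \partial y_2} \right|_{0} f\bigl(a + y_1 (i l_1) + y_2 (i l_2)\bigr),
\]
which is the claim. There is no real obstacle here; the content is entirely in recognizing that $\mathfrak{s}$ and $\mathfrak{a} = i\mathfrak{s}$ make the decomposition trivial and $\mathfrak{t}$-abelianness kills $D_1$, so that only the ``flat'' piece $B_1$ of the infinitesimal $KAK$ decomposition contributes.
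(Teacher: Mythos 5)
Your proof is correct and follows the same route as the paper's: identify the trivial decomposition $A_1 = x_1 l_1 + x_2 l_2 \in \mathfrak{s}$, $B_1 = y_1(il_1) + y_2(il_2) \in \mathfrak{a}$, $C_1 = 0$, note that $A_1$ and $B_1$ commute (since both lie in $\mathfrak{t}$) so $D_1 = 0$, then invoke Lemma~\ref{psiaf} and the Wirtinger identity. You spell out the justifications (that $i\mathfrak{s} = \mathfrak{a}$, that $\mathfrak{t}$ is abelian, and that the $x$-derivatives vanish) a bit more explicitly than the paper does, but the argument is the same.
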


\begin{proof}
In this case we have 
$z_1l_1+z_2l_2 = A_1+B_1+0$
with $A_1=x_1l_1+x_2+l_2\in \mathfrak{s}$ and $B_1= y_1l_1+y_2l_2\in \mathfrak{a}$, 
and $A_1$ and $B_1$ commute, 
so $D_1=0$ and $B_2=0$.

Then by Lemma~\ref{psiaf},
\begin{align*}
H_{l_1,l_2}(a)& =
\left. \frac{\partial^2}{\partial z_1 \partial \overline{z_2}} \right|_{0}
f(a+y_1il_1+y_2il_2) \\
& = \frac{1}{4}\left. \frac{\partial^2}{\partial y_1 \partial y_2} \right|_{0}
f(a+y_1il_1+y_2il_2)
\qedhere 
\end{align*}
\end{proof}

\begin{lem}
Assume $l_1 \in \mathfrak{k}_{\alpha}$ and $l_2 \in \mathfrak{s}$, 
then $H_{l_1,l_2}(a)=0$.
\end{lem}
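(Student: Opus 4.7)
The plan is to apply Lemma~\ref{psiaf} and verify that $B_1 + B_2$ has no mixed monomial coupling a $z_1$-variable with a $z_2$-variable, so that the mixed partial vanishes.

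First I would decompose $z_1 l_1 + z_2 l_2$ via Lemma~\ref{decomposition}. Since $l_2 \in \mathfrak{s} \subset \mathfrak{k}$ and $i l_2 \in \mathfrak{a}$, the piece $z_2 l_2 = x_2 l_2 + y_2(i l_2)$ contributes $x_2 l_2$ to $A_1$ and $y_2(i l_2)$ to $B_1$, with no $C_1$-part. For $l_1 \in \mathfrak{k}_\alpha$, point~(3) of Lemma~\ref{decomposition} splits $y_1(i l_1)$ between $\mathfrak{k}$ and $\mathrm{Ad}(\exp a)(\mathfrak{k})$, with no $\mathfrak{a}$-contribution by point~(2). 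Consequently
\[
A_1 = x_1 l_1 + x_2 l_2 - y_1\cosh\langle\alpha, a\rangle\, l_1',
\quad
B_1 = y_2\,(i l_2),
\quad
C_1 = \frac{y_1}{\sinh\langle\alpha, a\rangle}\,\mathrm{Ad}(\exp a)(l_1'),
\]
and in particular $B_1$ depends only on $y_2$.

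Next I would identify $B_2$ as the $\mathfrak{a}$-component of $D_1 = \frac{1}{2}([B_1,A_1]+[C_1,A_1]+[C_1,B_1])$, working in the direct sum $\mathfrak{g} = \mathfrak{s} \oplus \mathfrak{a} \oplus \bigoplus_\beta \mathfrak{g}_\beta$. Because $B_1 \in \mathfrak{a} \subset \mathfrak{t}$ acts by scalars on each root space, the brackets $[B_1, A_1]$ and $[C_1, B_1]$ stay inside $\bigoplus_\beta \mathfrak{g}_\beta$ and contribute nothing to $\mathfrak{a}$. Splitting $[C_1, A_1]$: the bracket $[C_1, x_2 l_2]$ stays in $\mathfrak{g}_\alpha \oplus \mathfrak{g}_{-\alpha}$ since $l_2 \in \mathfrak{t}$; the bracket $[C_1, x_1 l_1]$ lies in $[\mathfrak{k}_\alpha \oplus i\mathfrak{k}_\alpha, \mathfrak{k}_\alpha] \subset \mathbb{R}\cdot i\alpha \subset \mathfrak{s}$ (via $[e_\alpha - e_{-\alpha}, i(e_\alpha + e_{-\alpha})] = 2i\alpha$ and $i\mathfrak{a} = \mathfrak{s}$); only the remaining term $[C_1, -y_1\cosh\langle\alpha, a\rangle\, l_1']$ produces an $\mathfrak{a}$-component, proportional to $y_1^2\,\alpha$.

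Therefore $B_1 + B_2 = y_2(i l_2) + c\, y_1^2\, \alpha$ for some real constant $c$ depending on $a$ and $l_1$; this has no mixed monomial in a $z_1$-variable and a $z_2$-variable. Taylor-expanding $f(a + B_1 + B_2)$ around $a$ and applying $\partial^2/(\partial z_1 \partial \overline{z_2})|_0$ via Lemma~\ref{psiaf} then yields zero, proving $H_{l_1,l_2}(a)=0$. The main delicate point is the bracket bookkeeping for $[C_1, A_1]$: checking that every piece potentially coupling $l_1$ and $l_2$ lands in $\bigoplus_\beta \mathfrak{g}_\beta$ or in $\mathfrak{s}$, never in $\mathfrak{a}$, which rests on the structural facts $[\mathfrak{t}, \mathfrak{g}_\beta] \subset \mathfrak{g}_\beta$ and $[\mathfrak{k}_\alpha, \mathfrak{k}_\alpha] \subset i\mathbb{R}\alpha \subset \mathfrak{s}$.
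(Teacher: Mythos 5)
Your overall strategy -- split $z_1l_1+z_2l_2=A_1+B_1+C_1$, compute the $\mathfrak{a}$-component $B_2$ of $D_1$, and check via Lemma~\ref{psiaf} that $B_1+B_2$ has no monomial coupling a $z_1$-variable with a $z_2$-variable -- is exactly the paper's approach, and your final conclusion $B_1+B_2=y_2(il_2)+c\,y_1^2\alpha$ is correct. However, one structural claim in your bracket bookkeeping is false and, worse, internally inconsistent with your own next sentence. You assert
\[
[\mathfrak{k}_{\alpha}\oplus i\mathfrak{k}_{\alpha},\,\mathfrak{k}_{\alpha}]\subset \mathbb{R}\cdot i\alpha\subset\mathfrak{s}.
\]
Only half of this is right: $[\mathfrak{k}_{\alpha},\mathfrak{k}_{\alpha}]\subset\mathbb{R}\cdot i\alpha\subset\mathfrak{s}$ holds, but $[i\mathfrak{k}_{\alpha},\mathfrak{k}_{\alpha}]=i[\mathfrak{k}_{\alpha},\mathfrak{k}_{\alpha}]\subset\mathbb{R}\cdot\alpha\subset\mathfrak{a}$, so the left-hand side sits in $\mathbb{C}\alpha$, not in $\mathfrak{s}$. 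Notice that if your inclusion were true it would apply verbatim to $[C_1,\,-y_1\coth\langle\alpha,a\rangle\,l_1']$ (since $l_1'\in\mathfrak{k}_{\alpha}$ and $C_1\in\mathfrak{k}_{\alpha}\oplus i\mathfrak{k}_{\alpha}$) and would force that term to lie in $\mathfrak{s}$ as well -- contradicting your correct observation in the very next clause that this term contributes a $y_1^2\alpha$ piece to $\mathfrak{a}$.

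The reason $[C_1,x_1l_1]$ nonetheless has no $\mathfrak{a}$-component is more specific: writing $C_1=y_1\,il_1+y_1\coth\langle\alpha,a\rangle\,l_1'$, its $i\mathfrak{k}_{\alpha}$-part is a complex scalar multiple of $l_1$, so $[y_1\,il_1,\,x_1l_1]=x_1y_1\,i[l_1,l_1]=0$; the remaining $[y_1\coth\langle\alpha,a\rangle\,l_1',\,x_1l_1]$ lands in $[\mathfrak{k}_{\alpha},\mathfrak{k}_{\alpha}]\subset\mathfrak{s}$. In contrast, $[C_1,\,l_1']$ contains $[il_1,l_1']\in[i\mathfrak{k}_{\alpha},\mathfrak{k}_{\alpha}]\subset\mathfrak{a}$, which is nonzero (indeed $[il_1,l_1']=-2\alpha$ for the basis vectors), producing the $y_1^2$ term -- exactly as the paper records. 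Once this is corrected, your argument coincides with the paper's and the conclusion follows.
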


\begin{proof}

Let us first determine the $A_1, B_1, C_1$ such that $z_1l_1+z_2l_2=A_1+B_1+C_1$. 
Using Lemma~\ref{decomposition}, write 
\[
il_1=-(\coth\left<\alpha , a\right>)l_1' + 
\frac{1}{\sinh\left<\alpha , a\right>}(\mathrm{Ad}(\exp a)(l_1')
\]
with $l_1'=\mathrm{ad}(a)(il)/\left<\alpha , a\right>$.

Then we have 
\begin{align*}
A_1 & =x_1l_1 -y_1(\coth\left<\alpha , a\right>)l_1' +x_2l_2  \\
B_1 & =y_2 il_2 \\
C_1 & = \frac{y_1}{\sinh\left<\alpha , a\right>}
	(\mathrm{Ad}(\exp a)(l_1')=y_1il_1+y_1(\cosh\left<\alpha , a\right>)l_1'
\end{align*}

We must now compute $D_1=\left( [B_1,A_1]+[C_1,A_1]+[C_1,B_1] \right)/2$.
In fact we must only determine $B_2$ which is the part of $D_1$ that lies in $\mathfrak{a}$.

We have 
\begin{align*}
[B_1,A_1] & = [y_2 il_2, x_1l_1 -y_1(\coth\left<\alpha , a\right>)l_1' +x_2l_2] \\
& = -y_1y_2(\coth \left<\alpha , a\right>) [il_2, l_1']+x_1y_2[il_2,l_1] 
\end{align*}
Now $il_2 \in \mathfrak{a}$ and 
$l_1,l_1' \in \mathfrak{k}_{\alpha}\subset \mathfrak{g}_{\alpha} \oplus \mathfrak{g}_{-\alpha}$
so $ [il_2, l_1'],[il_2,l_1] \in \mathfrak{g}_{\alpha} \oplus \mathfrak{g}_{-\alpha}$, and 
the third point of Lemma~\ref{decomposition} applies to show that the $\mathfrak{a}$ component 
of $[B_1,A_1]$ is zero.

For the second part, write
\begin{align*}
[C_1,A_1] & = x_1y_1(\cosh\left<\alpha , a\right>) [l_1',l_1] 
-y_1^2(\coth\left<\alpha , a\right>)[il_1,l_1'] \\
& \qquad + x_2y_1[il_1,l_2] 
+ x_2y_1(\cosh\left<\alpha , a\right>)[l_1',l_2]
\end{align*}
We have here $[l_1',l_1], [l_1',l_2] \in \mathfrak{k}$, 
and $[il_1,l_2]\in  \mathfrak{g}_{\alpha} \oplus \mathfrak{g}_{-\alpha}$ as above, 
so only $[il_1,l_1']$ matters.
By the properties of the root decomposition, 
\[
[il_1,l_1']\in (\mathfrak{g}_{-2\alpha} \oplus 
\mathfrak{g}_0 \oplus \mathfrak{g}_{2\alpha})\cap i\mathfrak{k}
\]
and $\mathfrak{g}_{-2\alpha}=\mathfrak{g}_{2\alpha}=\{0\}$. So $[il_1,l_1']\in \mathfrak{a}$.
But in fact we don't need to determine it more explicitly because it appears as a term in 
$y_1^2$ and these are ignored in the computation of $\partial \overline{\partial}$.

For the third part, 
\[
[C_1,B_1]= y_1y_2[il_1,il_2]+y_1y_2 (\cosh\left<\alpha , a\right>)[l_1',il_2] 
\]
with $[il_1,il_2] \in \mathfrak{k}$ and $[l_1',il_2]\in \mathfrak{g}_{\alpha} \oplus \mathfrak{g}_{-\alpha}$
so there is no contribution to $B_2$.

We have thus proved that 
\[
B_2=-\frac{1}{2}y_1^2(\coth \left<\alpha , a\right>)[il_1,l_1'].
\]

Lemma~\ref{psiaf} now gives 
\begin{align*}
H_{l_1,l_2}(a) & =
\left. \frac{\partial^2}{\partial z_1 \partial \overline{z_2}} \right|_{0}
f(a+B_2+B_1)  \\
& = 
\left. \frac{\partial^2}{\partial z_1 \partial \overline{z_2}} \right|_{0}
f\left(a+y_2il_2-y_1^2(\coth \left<\alpha , a\right>)[il_1,l_1']/2\right) \\
&=0
\qedhere
\end{align*}
\end{proof}

\begin{lem}
Assume $l_1\in \mathfrak{k}_{\alpha_1}$ and $l_2\in \mathfrak{k}_{\alpha_2}$, with 
$\alpha_1 \neq \alpha_2$ positive roots.
Then $H_{l_1,l_2}(a)=0$.
\end{lem}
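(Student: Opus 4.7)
My plan is to mimic the proof of the previous lemma, applying Lemma~\ref{psiaf} after carefully identifying the $\mathfrak{a}$-component $B_2$ of the quadratic correction term $D_1$. The punchline should be that, in sharp contrast to the previous lemmas, $B_2$ contains no monomial involving both $z_1$ and $z_2$, so the mixed derivative kills everything.

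First, I would run the decomposition of Lemma~\ref{decomposition} on each summand. Since $l_j\in \mathfrak{k}_{\alpha_j}\subset\bigoplus_{\beta\in\Phi}\mathfrak{g}_\beta$, point (2) of that lemma gives $B_1=0$, so that $D_1=[C_1,A_1]/2$. Writing $c_j=\coth\langle\alpha_j,a\rangle$ and using points (3)–(5), one has
\[
A_1 = x_1 l_1 + x_2 l_2 - y_1 c_1 l_1' - y_2 c_2 l_2',\qquad
C_1 = y_1(il_1+c_1 l_1')+ y_2(il_2+c_2 l_2').
\]
In particular, every term appearing in $A_1$ or $C_1$ lies in the sum
$V:=\mathfrak{g}_{\alpha_1}\oplus\mathfrak{g}_{-\alpha_1}\oplus\mathfrak{g}_{\alpha_2}\oplus\mathfrak{g}_{-\alpha_2}$.

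Next I would isolate the $\mathfrak{a}$-component $B_2$ of $[C_1,A_1]/2$. By the root decomposition, $[V,V]$ is contained in $\bigoplus_{\beta}\mathfrak{g}_\beta$ for $\beta\in\{0,\pm 2\alpha_1,\pm 2\alpha_2,\pm(\alpha_1+\alpha_2),\pm(\alpha_1-\alpha_2)\}$, so only the $\mathfrak{g}_0$ part can contribute to $\mathfrak{a}$. Since $\alpha_1\neq\pm\alpha_2$ (distinct positive roots), cross brackets $[\mathfrak{g}_{\pm\alpha_1},\mathfrak{g}_{\pm\alpha_2}]$ never hit $\mathfrak{g}_0$; they all land in the nontrivial root spaces and therefore contribute only to the $A_2,C_2$ pieces. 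Hence $B_2$ receives contributions only from the same-root brackets $[il_j,l_j']$, exactly as in the previous lemma. Those terms are purely of the form $y_j^2\cdot(\text{constant in }\mathfrak{a})$, so $B_2$ is a sum of a function of $y_1$ alone and a function of $y_2$ alone, with no $z_1\overline{z_2}$-type cross terms.

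Finally, I would invoke Lemma~\ref{psiaf}:
\[
H_{l_1,l_2}(a) = \left.\frac{\partial^2}{\partial z_1\partial\overline{z_2}}\right|_0 f(a+B_1+B_2) = \left.\frac{\partial^2}{\partial z_1\partial\overline{z_2}}\right|_0 f\bigl(a+B_2\bigr).
\]
Expanding $f$ to second order at $a$, the linear term in $B_2$ is a sum of a $y_1^2$-monomial and a $y_2^2$-monomial, and the quadratic term in $B_2$ is of order $\geq 4$ in $(z_1,z_2)$; in both cases every surviving monomial depends on at most one of the indices $1,2$. Since $\partial^2/\partial z_1\partial\overline{z_2}|_0$ only sees mixed bilinear monomials, the result is $0$.

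The only mild subtlety is bookkeeping: one must check that no mixed term sneaks in from the part of $[C_1,A_1]$ that would otherwise go into $A_2$ or $C_2$ — but this is automatic since those components do not enter the expression $f(a+B_1+B_2)$ from Lemma~\ref{psiaf}. The core structural input is the disjointness $\alpha_1\neq\pm\alpha_2$, which prevents any cancellation between the two root spaces in the Cartan direction, and this is where the hypothesis is used.
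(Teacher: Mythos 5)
Your proposal is correct and follows essentially the same route as the paper: apply Lemma~\ref{decomposition} to get $B_1=0$ and explicit $A_1,C_1$, use the root-space structure to see that only $[il_j,l_j']$ feeds into $B_2$, then observe via Lemma~\ref{psiaf} that $B_2$ is a combination of $y_1^2$ and $y_2^2$ alone so $\partial^2/\partial z_1\partial\overline{z_2}|_0$ annihilates $f(a+B_2)$. The only cosmetic difference is that you dispose of the $\coth\left<\alpha_j,a\right>l_j'$ pieces of $C_1$ by putting everything in $V$ and invoking the root grading of $[V,V]$, whereas the paper handles those pieces by the separate observation that $[\mathfrak{k},\mathfrak{k}]\subset\mathfrak{k}$ has trivial $\mathfrak{a}$-component; both close the same small gap (the same-root brackets $[l_j',l_j]$ do have a $\mathfrak{g}_0$-component, which lands in $\mathfrak{s}$, not $\mathfrak{a}$), and your appeal to ``exactly as in the previous lemma'' makes this explicit enough.
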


\begin{proof}
Using Lemma~\ref{decomposition}, we write 
\begin{align*}
il_1 & =-(\coth\left< \alpha_1 , a\right>)l_1' + 
\frac{1}{\sinh\left< \alpha_1 , a\right>}\mathrm{Ad}(\exp a)(l_1')\\
il_2 & =-(\coth\left< \alpha_2 , a\right>)l_2' + 
\frac{1}{\sinh\left< \alpha_2 , a\right>}\mathrm{Ad}(\exp a)(l_2') 
\end{align*}
with $l_1'=\mathrm{ad}(a)(il_1)/\left< \alpha_1 , a\right>$ and 
$l_2'=\mathrm{ad}(a)(il_2)/\left< \alpha_2 , a\right>$.

Then we have 
\begin{align*}
A_1 & 
  = x_1l_1+x_2l_2 -y_1(\coth\left< \alpha_1 , a\right>)l_1'-y_2(\coth\left< \alpha_2 , a\right>)l_2'   \\
B_1 & 
  = 0\\
C_1 & 
  = y_1\frac{1}{\sinh\left< \alpha_1 , a\right>}
	(\mathrm{Ad}(\exp a)(l_1')+y_2\frac{1}{\sinh\left< \alpha_2 , a\right>}(\mathrm{Ad}(\exp a)(l_2')\\
D_1 & =  [C_1,A_1]/2\\
 & =  [y_1il_1+y_2il_2+y_1(\cosh\left< \alpha_1 , a\right>)l_1'+y_2(\cosh\left< \alpha_2 , a\right>)l_2' , A_1]/2
\end{align*}
We have $y_1(\cosh \left< \alpha_1 , a\right>)l_1'+y_2(\cosh\left< \alpha_2 , a\right>)l_2'$ and 
$A_1$ in $\mathfrak{k}$, so their bracket remains in $\mathfrak{k}$ and doesn't appear 
in $B_2$.
We compute $[y_1il_1+y_2il_2, A_1]$ which is equal to 
\begin{align*}
  &  
x_2y_1[il_1,l_2]
	-y_1^2(\coth\left< \alpha_1 , a\right>)[il_1,l_1']
	-y_1y_2(\coth\left< \alpha_1 , a\right>)[il_2,l_1']\\
& \qquad +x_1y_2[il_2,l_1] -y_2^2(\coth\left< \alpha_2 , a\right>)[il_2,l_2'] 
		-y_1y_2(\coth\left< \alpha_2 , a\right>)[il_1,l_2'].
\end{align*}

Again the properties of the root decomposition tell us that 
$[il_1,l_2]$, $[il_1,l_2']$, $[il_2,l_1],$ and $[il_2,l_1']$ are in 
$\bigoplus_{\alpha \in \Phi} \mathfrak{g}_{\alpha}$, 
so the corresponding terms do not contribute to $B_2$.
As before, $[il_1,l_1']$ and $[il_2,l_2']$ are in $\mathfrak{a}$, so we get 
\[
B_2=\frac{1}{2}(-y_1^2(\coth\left< \alpha_1 , a\right>)[il_1,l_1']-
y_2^2(\coth\left< \alpha_2 , a\right>)[il_2,l_2']).
\]

Applying Lemma~\ref{psiaf}, we get 
\begin{align*}
H_{l_1,l_2}(a) & = 
\left. \frac{\partial^2}{\partial z_1 \partial \overline{z_2}} \right|_{0}
f(a+B_2+B_1)  \\
& = 
\left. \frac{\partial^2}{\partial z_1 \partial \overline{z_2}} \right|_{0}
f\left(a-y_1^2\coth(\left< \alpha_1 , a\right>)[il_1,l_1']/2-y_2^2\coth(\left< \alpha_2 , a\right>)[il_2,l_2']/2\right) \\
&=0
\qedhere
\end{align*}
\end{proof}

Suppose now that $\alpha_1=\alpha_2=\alpha$. 
The subspace $\mathfrak{k}_{\alpha}$ is two dimensional, 
and we have chosen
a basis formed by the vectors
$e_{\alpha}+\theta(e_{\alpha})$ and $ie_{\alpha}-i\theta(e_{\alpha})$.

First we deal with the case when $l_1\neq l_2$.

\begin{lem}
Suppose $l_1=e_{\alpha}+\theta(e_{\alpha})$ and $l_2=ie_{\alpha}-i\theta(e_{\alpha})$. Then
\[
H_{l_1,l_2}(a)= \frac{i}{2}\left<\alpha , \nabla f(a)\right> = -H_{l_2,l_1}(a).
\]
\end{lem}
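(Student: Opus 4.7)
The plan is to follow exactly the template of the previous three lemmas: apply Lemma~\ref{decomposition} to $z_1l_1+z_2l_2$, identify $B_2$, use Lemma~\ref{psiaf}, and read off the mixed partial. The specific feature here is that $l_1$ and $l_2$ both live in the same $\mathfrak{k}_\alpha$, so the brackets between them carry a genuine $\mathfrak{a}$-component, and that component is exactly what will produce a non-vanishing answer proportional to $\langle\alpha,\nabla f(a)\rangle$.

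First I would use points (4) and (5) of Lemma~\ref{decomposition} to record $l_1'=l_2$ and $l_2'=-l_1$. Writing $z_j=x_j+iy_j$ and expanding $z_1 l_1+z_2 l_2=x_1l_1+x_2l_2+y_1(il_1)+y_2(il_2)$, the decomposition from point (3) with $B_1=0$ gives
\[
A_1=(x_1+y_2\coth\langle\alpha,a\rangle)l_1+(x_2-y_1\coth\langle\alpha,a\rangle)l_2,
\qquad B_1=0,
\]
and $C_1\in\mathrm{Ad}(\exp a)(\mathfrak{k}_\alpha)$. Writing $C_1$ in the form $y_1il_1+y_2il_2+y_1\coth\langle\alpha,a\rangle\,l_2-y_2\coth\langle\alpha,a\rangle\,l_1$ (equivalent modulo the identification made via $\mathrm{Ad}(\exp a)$) turns out to be the most convenient form for computing the bracket.

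Next I would compute $D_1=\frac{1}{2}[C_1,A_1]$ and extract $B_2$, the $\mathfrak{a}$-component. The brackets $[l_i,l_j]$ contribute only to $i\mathfrak{a}$ (since $[l_1,l_2]=2i\alpha$, with $\alpha\in\mathfrak{a}$ under our identification), and the brackets $[il_i,il_j]$ similarly. The key point is that $[l_1,il_2]=i[l_1,l_2]=-2\alpha\in\mathfrak{a}$ and $[l_2,il_1]=2\alpha\in\mathfrak{a}$. Collecting only the $\mathfrak{a}$-terms in $[C_1,A_1]$ yields
\[
B_2 = \alpha(x_1y_2-x_2y_1)+\alpha\coth\langle\alpha,a\rangle\,(y_1^2+y_2^2),
\]
and all the remaining brackets (including the $y_1^2$ term $[il_1,l_1']=[il_1,l_2]$ and its analogue) either vanish or land outside of $\mathfrak{a}$, except those just identified.

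Finally I would apply Lemma~\ref{psiaf}: since $B_1+B_2$ has no first-order part in $(x_j,y_j)$, the real Hessian contribution to $\partial^2 f(a+B_1+B_2)/\partial z_1\partial\overline{z_2}|_0$ vanishes, and only the second-order part of $B_2$ itself, paired with $\nabla f(a)$, survives. Using
\[
\frac{\partial^2}{\partial z_1\partial\overline{z_2}}=\frac{1}{4}\Bigl(\frac{\partial^2}{\partial x_1\partial x_2}+\frac{\partial^2}{\partial y_1\partial y_2}\Bigr)+\frac{i}{4}\Bigl(\frac{\partial^2}{\partial x_1\partial y_2}-\frac{\partial^2}{\partial y_1\partial x_2}\Bigr),
\]
only the mixed $x_1y_2$ and $x_2y_1$ terms of $B_2$ contribute, and they produce $\frac{i}{4}(\alpha-(-\alpha))=\frac{i\alpha}{2}$. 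Taking the scalar product with $\nabla f(a)$ gives $H_{l_1,l_2}(a)=\frac{i}{2}\langle\alpha,\nabla f(a)\rangle$. The relation $H_{l_2,l_1}(a)=-H_{l_1,l_2}(a)$ then follows at once from the fact that swapping $l_1\leftrightarrow l_2$ corresponds to swapping $(z_1,z_2)$, hence conjugating the mixed derivative of the real function $\psi$. The only place where one needs to be careful is the second step: correctly discarding every bracket that lies in $\mathfrak{k}$, in $i\mathfrak{a}$, or in $\bigoplus_\beta\mathfrak{g}_\beta$, and retaining precisely the two brackets $[l_i,il_j]$ with $i\neq j$ that produce the $\alpha\in\mathfrak{a}$ responsible for the final expression.
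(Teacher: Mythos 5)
Your proposal is correct and follows the paper's proof essentially step for step: apply Lemma~\ref{decomposition} with $l_1'=l_2$, $l_2'=-l_1$, write $C_1=y_1il_1+y_2il_2+\coth\langle\alpha,a\rangle(y_1l_2-y_2l_1)$, extract the $\mathfrak{a}$-component $B_2=(x_1y_2-x_2y_1)\alpha+\coth\langle\alpha,a\rangle(y_1^2+y_2^2)\alpha$ by keeping only the $[il_i,l_j]$ brackets, and then read off $\frac{i}{2}\langle\alpha,\nabla f(a)\rangle$ via Lemma~\ref{psiaf}, with $H_{l_2,l_1}=\overline{H_{l_1,l_2}}$ giving the sign flip. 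Two trifles: the rewriting of $C_1$ is an exact identity (not ``equivalent modulo the identification''), since $\tfrac{1}{\sinh\langle\alpha,a\rangle}\mathrm{Ad}(\exp a)(l')=il+\coth\langle\alpha,a\rangle\,l'$, and the brackets $[il_i,il_j]$ you mention never actually arise in $[C_1,A_1]$; neither affects the argument.
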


\begin{proof}
Using Lemma~\ref{decomposition}, we write, just as in the previous proof
\begin{align*}
il_1 & =-(\coth\left<\alpha , a\right>)l_1' + 
		\frac{1}{\sinh\left<\alpha , a\right>}\mathrm{Ad}(\exp a)(l_1')\\
il_2 & =-(\coth\left<\alpha , a\right>)l_2' + 
		\frac{1}{\sinh\left<\alpha , a\right>}\mathrm{Ad}(\exp a)(l_2')
\end{align*}
with $l_1' =l_2$  and  $l_2'= -l_1$.

Then we have 
\begin{align*}
A_1 & = (x_1+y_2\coth\left<\alpha , a\right>)l_1+(x_2 -y_1\coth\left<\alpha , a\right>)l_2   \\
B_1 & = 0\\
C_1 & = y_1\frac{1}{\sinh\left<\alpha , a\right>}\mathrm{Ad}(\exp a)(l_1')+y_2\frac{1}{\sinh\left<\alpha , a\right>}\mathrm{Ad}(\exp a)(l_2')\\
D_1 & = [C_1,A_1]/2\\
 & =  [y_1il_1+y_2il_2
		+y_1\cosh(\left<\alpha , a\right>)l_2
			-y_2\cosh(\left<\alpha , a\right>)l_1 , A_1]/2
\end{align*}

Once again the bracket of 
$y_1(\cosh\left<\alpha , a\right>)l_2-y_2(\cosh\left<\alpha , a\right>)l_1$ with $A_1$ yields 
only terms in $\mathfrak{k}$ so we compute $[y_1il_1+y_2il_2,A_1]$, which is equal to 
\[
y_2(x_1+y_2\coth\left<\alpha , a\right>)[il_2,l_1]+y_1(x_2-y_1\coth\left<\alpha , a\right>)[il_1,l_2].
\]
Using the explicit choices of $l_1$ and $l_2$ we have 
\begin{align*}
-[il_2,l_1]=[il_1,l_2] & = [i(e_{\alpha}+\theta(e_{\alpha})),ie_{\alpha}-i\theta(e_{\alpha})]\\
	& = 2[e_{\alpha},\theta(e_{\alpha})]\\
	& = -2[e_{\alpha},e_{-\alpha}]\\
	& = -2\alpha.
\end{align*}

Finally we have 
\[
B_2=\left(y_2x_1+y_2^2\coth\left<\alpha , a\right>-y_1x_2+y_1^2\coth\left<\alpha , a\right>\right)\alpha.
\]

Applying Lemma~\ref{psiaf}, we get 
\begin{align*}
H_{l_1,l_2}(a) & = 
\left. \frac{\partial^2}{\partial z_1 \partial \overline{z_2}} \right|_{0}
f(a+B_2+B_1)  \\
& = 
\left.\frac{i}{4}\left( \frac{\partial^2}{\partial x_1 \partial y_2}-  \frac{\partial^2}{\partial y_1 \partial x_2}\right) \right|_{0}
f(a+(y_2x_1-y_1x_2)\alpha) \\
&=\frac{i}{2}(Df)_a(\alpha) \\
\intertext{where $(Df)_a$ denotes the differential of $f$ at $a$, so}
H_{l_1,l_2}(a)&=\frac{i}{2}\left< \alpha, \nabla f(a)\right>. 
\qedhere
\end{align*}
\end{proof}

\begin{lem}
Suppose now that $l_1=l_2=e_{\alpha}+\theta(e_{\alpha})$, then 
\[
H_{l_1,l_2}(a)= \frac{1}{2}\left<\alpha ,\nabla f(a)\right>\coth\left<\alpha , a\right>.
\]
\end{lem}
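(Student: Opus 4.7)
The plan is to follow the same scheme as in the three preceding lemmas: decompose $z_1l_1 + z_2 l_2$ via Lemma~\ref{decomposition}, identify the component $B_2 \in \mathfrak{a}$ of $D_1$, then invoke Lemma~\ref{psiaf} and compute the mixed partial derivative.

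Since $l_1 = l_2$, we have $z_1 l_1 + z_2 l_2 = (x_1+x_2) l_1 + (y_1+y_2) i l_1$. Applying Lemma~\ref{decomposition} to $il_1$, with $l_1' = i e_\alpha - i\theta(e_\alpha)$ (by the fourth point of that lemma), I obtain
\[
A_1 = (x_1+x_2)l_1 - (y_1+y_2)\coth\langle\alpha,a\rangle\, l_1', \quad B_1 = 0,
\]
\[
C_1 = (y_1+y_2) i l_1 + (y_1+y_2)\cosh\langle\alpha,a\rangle\, l_1'.
\]

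Next I compute $D_1 = [C_1, A_1]/2$. The bracket of the $l_1'$-part of $C_1$ with $A_1$ stays in $\mathfrak{k}_\alpha \subset \mathfrak{k}$ and so contributes nothing to $B_2$. For the remaining piece $[(y_1+y_2)il_1, A_1]$, the coefficient of $l_1$ gives $[il_1,l_1]=0$, and the coefficient of $l_1'$ gives the bracket $[il_1, l_1']$. Exactly as in the preceding lemma (where the same bracket was shown to equal $-2\alpha$), I have $[il_1, l_1'] = -2\alpha$. Collecting terms,
\[
B_2 = (y_1+y_2)^2 \coth\langle\alpha,a\rangle\, \alpha.
\]

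Finally, Lemma~\ref{psiaf} yields
\[
H_{l_1,l_2}(a) = \left.\frac{\partial^2}{\partial z_1 \partial \overline{z_2}}\right|_0 f\!\left(a + (y_1+y_2)^2 \coth\langle\alpha,a\rangle\,\alpha\right).
\]
Since $B_2$ depends only on $y_1, y_2$, only the $\partial^2/\partial y_1 \partial y_2$ term in the expansion of $\partial^2/\partial z_1\partial \overline{z_2}$ survives. Using $\partial^2_{y_1 y_2}(y_1+y_2)^2 = 2$ and the chain rule at $0$, the computation reduces to $\frac{1}{4}\cdot 2\coth\langle\alpha,a\rangle\cdot (Df)_a(\alpha)$, which equals $\frac{1}{2}\coth\langle\alpha,a\rangle \langle\alpha, \nabla f(a)\rangle$, as claimed. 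No step is really an obstacle here; the only care needed is in discarding the $\mathfrak{k}$-valued contributions to $D_1$ and in remembering that the bracket $[il_1,l_1']$ already evaluated in the previous lemma is precisely what delivers the factor $\alpha$.
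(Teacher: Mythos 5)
Your proof is correct and follows essentially the same route as the paper: decompose via Lemma~\ref{decomposition}, isolate the $\mathfrak{a}$-component $B_2 = (y_1+y_2)^2\coth\langle\alpha,a\rangle\,\alpha$ using the fact that $[il_1,l_1]=0$ and that the $l_1'$-part of $C_1$ brackets into $\mathfrak{k}$, reuse $[il_1,l_1']=-2\alpha$ from the preceding lemma, and then apply Lemma~\ref{psiaf} with the observation that only the $\partial^2/\partial y_1\partial y_2$ piece survives. (Minor remark: the coefficient of $l_1'$ in the rewritten $C_1$ is really $\coth\langle\alpha,a\rangle$, not $\cosh\langle\alpha,a\rangle$, but that term is discarded anyway, so the slip is inconsequential — the paper's own proof has the same slip.)
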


\begin{proof}
Using Lemma~\ref{decomposition}, we write
\[
il_2 = il_1  =-(\coth\left<\alpha , a\right>)l_1' + 
		\frac{1}{\sinh\left<\alpha , a\right>}\mathrm{Ad}(\exp a)(l_1')
\]
with
\[ 
l_2'=l_1'       = ie_{\alpha}-i\theta(e_{\alpha}).
\]
Then we have 
\begin{align*}
A_1 & = (x_1+x_2)l_1 -(y_1\coth\left<\alpha , a\right>+y_2\coth\left<\alpha , a\right>)l_1'   \\
B_1 & = 0\\
C_1 & = y_1\frac{1}{\sinh\left<\alpha , a\right>}\mathrm{Ad}(\exp a)(l_1')+y_2\frac{1}{\sinh\left<\alpha , a\right>}\mathrm{Ad}(\exp a)(l_2')\\
D_1 & =  [C_1,A_1]/2\\
 & =  [(y_1+y_2)il_1
		+(y_1\cosh(\left<\alpha , a\right>)
			+y_2\cosh(\left<\alpha , a\right>))l_1' , A_1]/2
\end{align*}

Once again the bracket of 
$(y_1\cosh\left<\alpha , a\right>+y_2\cosh\left<\alpha , a\right>)l_1'$ with $A_1$ yields 
only terms in $\mathfrak{k}$, so we just compute 
\[
[(y_1+y_2)il_1,A_1]=-(y_1+y_2)^2(\coth\left<\alpha , a\right>)[il_1,l_1'].
\]
Using the explicit choices of $l_1$ we have 
\begin{align*}
[il_1,l_1'] & = [i(e_{\alpha}+\theta(e_{\alpha})),ie_{\alpha}-i\theta(e_{\alpha})]\\
	& = -2\alpha.
\end{align*}
Finally we have 
\[
B_2=(y_1+y_2)^2(\coth\left<\alpha , a\right>)\alpha.
\]

Applying Lemma~\ref{psiaf}, we get 
\begin{align*}
H_{l_1,l_2}(a) & = 
\left. \frac{\partial^2}{\partial z_1 \partial \overline{z_2}} \right|_{0}
f(a+B_2+B_1)  \\
& = 
\left.\frac{1}{4}\frac{\partial^2}{\partial y_1 \partial y_2} \right|_{0}
f\left(a+(y_1^2+2y_1y_2+y_2^2)(\coth\left<\alpha , a\right>\right)\alpha) \\
&=\frac{\coth\left<\alpha , a\right>}{2}(Df)_a(\alpha) \\
&=\frac{\coth\left<\alpha , a\right>}{2}\left< \alpha, \nabla f(a)\right>.
\qedhere
\end{align*}
\end{proof}

The last step is to compute the coefficient of the Hessian with $l_1=l_2=ie_{\alpha}-i\theta(e_{\alpha})$, 
and the result is exactly the same as in the previous case:

\begin{lem}
Assume that $l_1=l_2=ie_{\alpha}-i\theta(e_{\alpha})$, then 
\[
H_{l_1,l_2}(a)= \frac{1}{2}\left<\alpha ,\nabla f(a)\right>\coth\left<\alpha , a\right>.
\]
\end{lem}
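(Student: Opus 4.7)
The plan is to mirror the previous lemma step for step, using the same strategy as the case $l_1=l_2=e_\alpha+\theta(e_\alpha)$. The structure of the computation is dictated entirely by Lemma~\ref{decomposition} and Lemma~\ref{psiaf}, and what changes is only the concrete identification of $l_1'$ and of the bracket $[il_1,l_1']$.

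First I would invoke point (5) of Lemma~\ref{decomposition} to identify $l_1'=l_2'=-e_\alpha-\theta(e_\alpha)$, and then write down the decomposition
\[
il_1=il_2=-(\coth\langle\alpha,a\rangle)l_1' + \frac{1}{\sinh\langle\alpha,a\rangle}\mathrm{Ad}(\exp a)(l_1').
\]
From this the expressions for $A_1, B_1, C_1$ are formally the same linear combinations in $x_i,y_i$ as in the previous lemma (with the new $l_1,l_1'$), and in particular $B_1=0$. So the question reduces once more to computing $B_2$, the $\mathfrak a$-component of $D_1=[C_1,A_1]/2$.

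The next step is to repeat the argument that only the term $[(y_1+y_2)il_1,A_1]=-(y_1+y_2)^2(\coth\langle\alpha,a\rangle)[il_1,l_1']$ contributes to $B_2$: all other brackets either land in $\mathfrak{k}$ (so contribute to $A_2$) or in $\bigoplus_{\beta\in\Phi}\mathfrak g_\beta$ (so contribute to $C_2$), by the root decomposition. The only nontrivial point specific to this case is therefore to check that with the new $l_1$ we still have $[il_1,l_1']=-2\alpha$. A direct calculation using $il_1=-e_\alpha+\theta(e_\alpha)$, $l_1'=-e_\alpha-\theta(e_\alpha)$, and $[e_\alpha,e_{-\alpha}]=\alpha$ together with $\theta(e_\alpha)=-e_{-\alpha}$ gives exactly $[il_1,l_1']=-2\alpha$, so
\[
B_2=(y_1+y_2)^2(\coth\langle\alpha,a\rangle)\,\alpha,
\]
identical to the previous lemma.

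Finally I would apply Lemma~\ref{psiaf} and differentiate: only the $2y_1y_2$ cross-term survives $\partial^2/\partial z_1\partial\overline{z_2}$, yielding
\[
H_{l_1,l_2}(a)=\frac{\coth\langle\alpha,a\rangle}{2}\langle\alpha,\nabla f(a)\rangle,
\]
which is the claimed formula. No step is genuinely hard here; the only thing to watch carefully is the sign bookkeeping in the bracket $[il_1,l_1']$, since the signs of $l_1$ and $l_1'$ are both flipped relative to the previous case and one must verify that they cancel to reproduce $-2\alpha$ rather than $+2\alpha$.
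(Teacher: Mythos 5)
Your proof is correct and mirrors the paper's own argument step for step: the same decomposition from Lemma~\ref{decomposition}(5), the same reduction to the bracket $[il_1,l_1']$, and the same direct verification (using $il_1=-e_\alpha+\theta(e_\alpha)$, $l_1'=-e_\alpha-\theta(e_\alpha)$, and $\theta(e_\alpha)=-e_{-\alpha}$) that $[il_1,l_1']=2[e_\alpha,\theta(e_\alpha)]=-2\alpha$, exactly as the paper does. The final differentiation via Lemma~\ref{psiaf} is also the same, so there is nothing to add.
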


\begin{proof}
Using Lemma~\ref{decomposition}, we write
\[
il_2 = il_1 =-(\coth\left<\alpha , a\right>)l_1' + 
		\frac{1}{\sinh\left<\alpha , a\right>}\mathrm{Ad}(\exp a)(l_1')
\]
with $l_2'=l_1'  = -e_{\alpha}-\theta(e_{\alpha})$.

The beginning of the computation doesn't change:
we have 
\begin{align*}
A_1 & = (x_1+x_2)l_1 -(y_1\coth\left<\alpha , a\right>+y_2\coth\left<\alpha , a\right>)l_1'   \\
B_1 & = 0\\
C_1 & = y_1\frac{1}{\sinh\left<\alpha , a\right>}\mathrm{Ad}(\exp a)(l_1')+y_2\frac{1}{\sinh \left<\alpha , a\right>}\mathrm{Ad}(\exp a)(l_2')\\
D_1 &  =  [(y_1+y_2)il_1
		+(y_1\cosh\left<\alpha , a\right>
			+y_2\cosh\left<\alpha , a\right>)l_1' , A_1]/2
\end{align*}

Once again the bracket of 
$(y_1\cosh\left<\alpha , a\right>+y_2\cosh\left<\alpha , a\right>)l_1'$ with $A_1$ yields 
only terms in $\mathfrak{k}$, so we just compute 
\[
[(y_1+y_2)il_1,A_1]=-(y_1+y_2)^2(\coth\left<\alpha , a\right>)[il_1,l_1'].
\]

Now the expression of $l_1$ has changed, but we have 
\begin{align*}
[il_1,l_1'] & = [i(ie_{\alpha}-i\theta(e_{\alpha})),-e_{\alpha}-\theta(e_{\alpha})]\\
	& = 2[e_{\alpha},\theta(e_{\alpha})] \\
	& = -2\alpha.
\end{align*}
In other words we have again
\[
B_2=(y_1+y_2)^2(\coth\left<\alpha , a\right>)\alpha,
\]
and applying Lemma~\ref{psiaf}, we get again
\begin{align*}
H_{l_1,l_2}(a) & = 
\left. \frac{\partial^2}{\partial z_1 \partial \overline{z_2}} \right|_{0}
f(a+B_2+B_1)  \\
&=\frac{\coth\left<\alpha , a\right>}{2}\left<\alpha ,\nabla f (a)\right>.
\qedhere
\end{align*}
\end{proof}

\section{Fano group compactifications}
\label{sec_group_comp}

We present here in a concise way group compactifications and the associated polytopes,
then recall results from \cite{DelLCT} about the convex potential of a hermitian metric, 
giving more precise statements for smooth metrics. We then combine this with the computation
of the complex Monge-Ampère (Corollary~\ref{MAC})
to partially recover the formula of Brion and Kazarnovskii for the degree of an ample 
line bundle on a group compactification.
The reader willing to learn more details about group compactifications can consult, for 
example, \cite{AB04II, AK05} or \cite[Chapter 6]{BK05}. For more general references about 
spherical varieties, see \cite{Tim11, Per14}.

\subsection{Definition and examples}

\begin{defn}
We say that a projective manifold $X$ is a \emph{compactification of $G$} (or a \emph{group compactification}, when $G$ 
is not specified) if $X$ admits a holomorphic $G\times G$ action with an open and dense orbit isomorphic to $G$ as a 
$G\times G$-homogeneous space. 
We say that $(X,L)$ is a \emph{polarized compactification of $G$} if $X$ is a compactification of $G$ and $L$ is a 
$G\times G$-linearized ample line bundle on $X$.
\end{defn}

Recall that a \emph{$G$-linearized line bundle} over a $G$-manifold $X$ is a 
line bundle $L$ on $X$ equipped with an action of $G$ lifting the action on $X$, 
and such that the morphisms between the fibers induced by this action 
are linear. 

\begin{exa}
If $G=T \simeq (\mathbb{C}^*)^n$ is a torus, then the compactifications of $T$ are the  
projective toric manifolds. One goes from the $T$-action to the $T\times T$ action 
through the morphism $T\times T \rightarrow T, (t_1,t_2)\mapsto t_1t_2^{-1}$.
\end{exa}

\begin{exa}
Assume that $G$ is an adjoint semisimple group, that is, a reductive group 
with trivial center. 
Then De Concini and Procesi \cite{DCP83} proved the existence of a special compactification
of $G$, called the wonderful compactification of $G$.
It is the only compactification of $G$ satisfying the following property: its boundary 
$X\setminus G$ (where we identify 
the open and dense orbit in $X$ with $G$) is a union of simple normal crossing prime 
divisors $D_i$, $i\in \{1,\ldots, r\}$, such that for any subset 
$I\subset \{1,\ldots, r\}$,
the intersection $X\cap \bigcap_{i\in I} D_i$ is the closure of a unique $G\times G$-orbit, and 
all $G\times G$-orbits appear this way. 
The integer $r$ is equal to the rank of $G$, which is the dimension of a maximal 
torus in $G$.
\end{exa}

\subsection{Polytopes associated to a polarized group compactification}

\begin{thm}{\cite[Section 2]{AB04II}}   
Let $(X,L)$ be a  polarized group compactification of $G$. Denote by $Z$ the 
closure of $T$ in $X$. Then $Z$ is a toric manifold, equipped with a $W$-action, 
and $L|_Z$ is a $W$-linearized ample toric line bundle on $Z$.
\end{thm}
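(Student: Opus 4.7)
The plan is to verify each clause of the statement in turn, identifying the smoothness of $Z$ as the main obstacle.

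First, I would establish the toric structure on $Z$. Since $T\subset G$ is a subgroup, it is stable under the $T\times T$-action $(t_1,t_2)\cdot g=t_1 g t_2^{-1}$ inherited from the $G\times G$-action on $G$; hence $Z$ is $T\times T$-stable. As $T$ is abelian, this action factors through the morphism $T\times T\to T$, $(t_1,t_2)\mapsto t_1 t_2^{-1}$, giving a $T$-action on $Z$ with open dense orbit $T$, so $Z$ is a $T$-equivariant compactification of $T$. For the Weyl group action, the stabilizer of the subvariety $T\subset G$ inside $N_G(T)\times N_G(T)$ is $\{(n_1,n_2):n_1 n_2^{-1}\in T\}$ and acts on $Z$ through its quotient $W=N_G(T)/T$; using diagonal lifts $(n,n)$ one recovers $w\cdot z=nzn^{-1}$, the natural extension of the standard conjugation action of $W$ on $T$, and one checks that this intertwines with the Weyl action on the character lattice of $T$, as required for the toric structure.

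The central difficulty is showing that $Z$ is smooth. Working locally around a $T\times T$-fixed point $x_0\in Z$, which exists by Borel's fixed point theorem applied to the projective $T$-variety $Z$, I would apply the local structure theorem for $G\times G$-spherical varieties (or the more specialized theorem for group compactifications developed by De Concini-Procesi in the wonderful case and Alexeev-Brion in general) to obtain a $T\times T$-stable affine open neighborhood of $x_0$ in $X$ whose structure realizes $Z$ as a toric slice. Smoothness of the ambient $X$ then forces this slice, and hence $Z$ locally at $x_0$, to be smooth; this propagates to all of $Z$ because every $T$-orbit closure in a toric variety meets the fixed locus. A more hands-on alternative is to identify $T_{x_0}Z$ as the sum of weight spaces of $T_{x_0}X$ under $T\times T$ with weight of the form $(\chi,-\chi)$, and deduce smoothness from the combinatorial smoothness criterion for toric varieties applied to the relevant cone.

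Finally, $L|_Z$ is ample as the restriction of an ample bundle to a closed subvariety of the projective manifold $X$, and the $G\times G$-linearization of $L$ restricts to a $T\times T$-linearization of $L|_Z$ that again factors through the subtraction map, yielding the toric $T$-linearization. The $W$-linearization is inherited by restricting the $G\times G$-linearization to diagonal lifts of Weyl group representatives in $N_G(T)\times N_G(T)$, with compatibility with the $T$-linearization up to the Weyl action on characters automatic from the group-theoretic setup.
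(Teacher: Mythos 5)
The paper does not prove this theorem: it is cited verbatim from Alexeev--Brion \cite[Section 2]{AB04II}, so there is no in-paper argument to compare your attempt against. Your outline does track the structure of the proof in the literature: $Z$ is $T\times T$-stable, the action factors through $(t_1,t_2)\mapsto t_1t_2^{-1}$, the $W$-action comes from the diagonal inside the stabilizer of $T$ in $N_G(T)\times N_G(T)$, smoothness comes from a local structure theorem around a fixed point and then propagates by completeness, and linearizations are obtained by restriction. That is the right skeleton, and correctly identifies smoothness of $Z$ as the non-trivial point.

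Two places in your sketch are thinner than they look. First, on the $W$-linearization of $L|_Z$: lifting $W$ via the diagonal $N_G(T)$ gives only an $N_G(T)$-linearization, and for this to descend to $W=N_G(T)/T$ you need the diagonal $T$ (which acts trivially on $Z$ by continuity from its trivial action on $T$) to also act trivially on the fibers of $L|_Z$. That is a genuine condition on the chosen $G\times G$-linearization, not something ``automatic from the group-theoretic setup''; it is why Alexeev--Brion must pay attention to the normalization of the linearization and, correspondingly, to the position of the polytope $P(X,L)$. Second, on smoothness: you correctly observe that $T_{x_0}Z$ lies inside the sum of $(\chi,-\chi)$-weight spaces of $T_{x_0}X$, but the step you actually need is the equality $T_{x_0}Z=\bigoplus_{\chi}(T_{x_0}X)_{(\chi,-\chi)}$ (equivalently, that the Levi slice produced by the local structure theorem is a neighborhood of $x_0$ in $Z$), and that equality is precisely the content that has to be extracted from the local structure theorem rather than merely invoked. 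These gaps are filled in the cited reference, but as written your sketch stops short of a complete argument at exactly those two points.
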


We denote by $P(X,L)$, or $P$ for simplicity, the polytope associated to the ample 
toric line bundle $L|_Z$ by the theory of toric varieties \cite{Ful93,Oda88}. 
The polytope $P$ is a lattice polytope in $M\otimes \mathbb{R}$, that we identified with
$\mathfrak{a}$, and it is $W$-invariant. 
Define $P^+(X,L):=P(X,L)\cap \overline{\mathfrak{a}^+}$. It is a polytope in 
$\mathfrak{a}$, and $P(X,L)$ is the union of the images of $P^+(X,L)$ by $W$. 

The polytope $P^+(X,L)$ encodes the structure of $G\times G$-representation of the 
space of holomorphic sections of $L$, generalizing the same property for toric line 
bundles.
\begin{prop}{\cite[Section 2.2]{AB04II}}    
Let $(X,L)$  be a polarized group compactification, then 
\[
H^0(X,L)\simeq \bigoplus \{  \mathrm{End}(V_{\alpha}) ~;~ \alpha \in M\cap P^+(X,L)\}
\]
where $V_{\alpha}$ is an irreducible representation of $G$ with highest weight $\alpha$.
\end{prop}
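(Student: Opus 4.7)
The plan is to realize $H^0(X,L)$ as a $G\times G$-stable subspace of $\mathbb{C}[G]$ via a trivialization on the open orbit, apply the Peter--Weyl decomposition, and then pin down which isotypic components appear by restricting to the toric slice $Z$.

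First, since the open orbit of $X$ is isomorphic to $G$ as a $G\times G$-homogeneous space with base point $e \in G$ having stabilizer $\mathrm{diag}(G)$, the $G\times G$-linearized line bundle $L$ restricted to the open orbit is equivariantly trivializable after possibly twisting by a character of $G$. Choose such a trivialization. Restriction of sections to the open orbit then yields an injective $G\times G$-equivariant linear map $H^0(X,L)\hookrightarrow \mathbb{C}[G]$, because the open orbit is dense in $X$. The Peter--Weyl theorem gives
\[
\mathbb{C}[G]\simeq \bigoplus_{\alpha\in M\cap \overline{\mathfrak{a}^+}} V_{\alpha}\otimes V_{\alpha}^*
\simeq \bigoplus_{\alpha\in M\cap \overline{\mathfrak{a}^+}}\mathrm{End}(V_{\alpha})
\]
as $G\times G$-representations. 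Since each $\mathrm{End}(V_{\alpha})$ is $G\times G$-irreducible and the summands are pairwise non-isomorphic, Schur's lemma implies that $H^0(X,L)$ is the direct sum of those $\mathrm{End}(V_{\alpha})$ it has nonzero intersection with; equivalently, there is a subset $S\subset M\cap \overline{\mathfrak{a}^+}$ such that $H^0(X,L)\simeq \bigoplus_{\alpha\in S}\mathrm{End}(V_{\alpha})$, and the task reduces to showing $S=M\cap P^+(X,L)$.

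Second, identify $S$ using the toric subvariety $Z=\overline{T}$. Since $Z$ meets every $G\times G$-orbit of $X$ (this is a standard feature of group compactifications, essentially the $KAK$ decomposition at the level of orbits), restriction to $Z$ is injective on each isotypic component $\mathrm{End}(V_{\alpha})\cap H^0(X,L)$: a section whose restriction to $Z$ vanishes has a zero locus stable under $G\times G$ and containing $Z$, hence equal to all of $X$. Inside $\mathrm{End}(V_\alpha)\subset \mathbb{C}[G]$, the matrix coefficient sending $g$ to the highest weight component of $g\cdot v_\alpha$ (paired with a highest weight dual vector) restricts to $T$ as the character $\chi_{\alpha}$; this extends to a section of $L|_Z$ exactly when $\alpha\in P(X,L)$ by the theory of toric line bundles, and since $\alpha$ is dominant this is equivalent to $\alpha\in P^+(X,L)$. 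So $\alpha\in S$ forces $\alpha\in M\cap P^+(X,L)$.

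Conversely, for each $\alpha\in M\cap P^+(X,L)$, the character $\chi_{\alpha}\in H^0(Z,L|_Z)$ must be in the image of restriction from $H^0(X,L)$: this can be obtained by averaging, under the $G\times G$-action, of a well chosen lift, or alternatively by invoking the decomposition of $H^0(Z,L|_Z)$ as a $W$-representation and matching it with $W$-fixed vectors in $\mathrm{End}(V_{\alpha})$. The main subtlety is this converse step, namely producing enough global sections to cover all of $P^+(X,L)$; I would resolve it by identifying the image of $H^0(X,L)\to H^0(Z,L|_Z)$ with the $W$-invariants, which is enough, combined with the fact that the polytope $P(X,L)$ arising from $L|_Z$ was defined to be $W$-invariant with $P(X,L)\cap \overline{\mathfrak{a}^+}=P^+(X,L)$, forcing $S\supseteq M\cap P^+(X,L)$ and completing the identification.
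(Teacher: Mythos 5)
The paper does not contain a proof of this statement; it is quoted from \cite[Section 2.2]{AB04II}, so there is nothing to compare against and I will assess your sketch on its own terms. The setup (trivialize $L$ on the open orbit, embed $H^0(X,L)$ equivariantly into $\mathbb{C}[G]$, apply Peter--Weyl, reduce to identifying a set $S$ of highest weights) is sound, and the inclusion $S\subseteq M\cap P^+(X,L)$ does go through once one is careful about which matrix coefficient restricts to $\chi_\alpha$ on $T$: it is $g\mapsto \langle v_\alpha^\vee, g v_\alpha\rangle$, of $T\times T$-weight $(\alpha,-\alpha)$, not the $B\times B$-highest weight vector of $\mathrm{End}(V_\alpha)$ (whose restriction to $T$ is generically zero). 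By contrast, the injectivity assertion is both false and unused: restriction of $\mathrm{End}(V_\alpha)\cap H^0(X,L)$ to $Z$ cannot be injective (compare $(\dim V_\alpha)^2$ with the number of lattice points of $P$), and the stated justification fails because the zero locus of an individual section inside an isotypic component is not $G\times G$-stable --- only the common zero locus of the full isotypic component is.

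The genuine gap is in the reverse inclusion $S\supseteq M\cap P^+$. Knowing that $\chi_\alpha$ extends to a section of $L|_Z$, and even that it lies in the image of $H^0(X,L)\to H^0(Z,L|_Z)$, does not yield $\alpha\in S$: the character $\chi_\alpha$ also occurs in the $T$-eigendecomposition of $\mathrm{End}(V_\beta)|_Z$ for every dominant $\beta$ having $\alpha$ as a weight, so a preimage of $\chi_\alpha$ may a priori live entirely in other isotypic pieces. Your fallback, identifying the image of restriction with the $W$-invariants of $H^0(Z,L|_Z)$, is also not correct: already for $X_0=\mathbb{P}^3$ and $L=\mathcal{O}(4)$ the restriction to $Z=\mathbb{P}^1$ is onto all of $H^0(\mathbb{P}^1,\mathcal{O}(4))$, which strictly contains its $W$-invariant part. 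What is actually needed is direct control, using normality of $X$ and the valuations along the $G\times G$-invariant boundary divisors (the spherical variety valuation machinery), of when the specific matrix coefficient $g\mapsto\langle v_\alpha^\vee,g v_\alpha\rangle$ extends across the boundary to a global section of $L$; this is the crux of the converse and it is not supplied by the reduction to $Z$ as written.
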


\begin{exa}{\cite[Proposition 6.1.11]{BK05}}
The wonderful compactification $X$ of an adjoint semisimple group is Fano.
The corresponding polytope $P(X,-K_X)$ is the convex hull of the images by 
the Weyl group $W$ of the weight $2\rho + \sum_{i=1}^r\alpha_i$, where 
the $\alpha_i$ are the simple roots of $\Phi^+$, and $2\rho$ is the sum of 
all positive roots.
\end{exa} 

We will use the three following examples to illustrate our results.

\begin{exa}
The wonderful compactification $X_0$ of $\mathrm{PGL}_2(\mathbb{C})$ is especially 
simple: it is $\mathbb{P}^3$ considered as $\mathbb{P}(\mathrm{Mat}_{2,2}(\mathbb{C}))$
equipped with the action of $\mathrm{PGL}_2(\mathbb{C})\times \mathrm{PGL}_2(\mathbb{C})$ 
induced by the multiplication of matrices on the left and on the right.

If we identify $\mathfrak{a}$ with $\mathbb{R}$ by sending the 
only positive root to $1$, then the lattice of characters is $(1/2)\mathbb{Z}$, the 
polytope $P$ associated to the anticanonical line bundle is $[-2,2]$ and the 
polytope $P^+$ is $[0,2]$.
Remark that $X_0$ is in fact homogeneous under a bigger group. This is in general not the case 
for wonderful compactifications, as the next example shows.
\end{exa}

\begin{exa}
Consider the adjoint semisimple group $\mathrm{PGL}_3(\mathbb{C})$. Then its wonderful 
compactification, that we denote by $X_1$, is Fano. Figure~\ref{PX1} represents the polytope 
$P(X_1,-K_{X_1})$, along with the Weyl chambers decomposition, 
and the weight lattice $M$. 

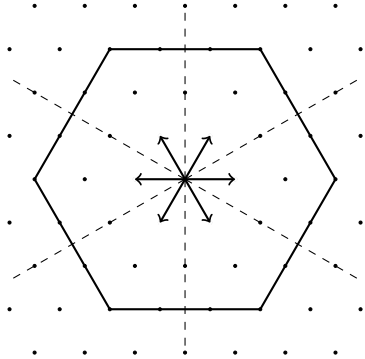
\begin{figure}
\centering
\begin{tikzpicture}[scale=2/3]
\foreach \l in {-2,-1,...,2}
{
\foreach \k in {-3,-2,...,3}
	{\draw[black, fill=black] (\k,\l*1.73) circle (0.03);}}
\foreach \l in {-3,-1,...,3}
{
\foreach \k in {-7,-5,...,7}
	{\draw[black, fill=black] (\k/2,\l*1.73/2) circle (0.03);}}
\draw[white, fill=white] (1,0) circle (0.04);
\draw[white, fill=white] (1/2,1.73/2) circle (0.04);
\draw[white, fill=white] (-1/2,1.73/2) circle (0.04);
\draw[white, fill=white] (-1,0) circle (0.04);
\draw[white, fill=white] (-1/2,-1.73/2) circle (0.04);
\draw[white, fill=white] (1/2,-1.73/2) circle (0.04);
\draw[->,thick] (0,0) -- (1,0);
\draw[->,thick] (0,0) -- (1/2,1.73/2);
\draw[->,thick] (0,0) -- (-1/2,1.73/2);
\draw[->,thick] (0,0) -- (-1,0);
\draw[->,thick] (0,0) -- (-1/2,-1.73/2);
\draw[->,thick] (0,0) -- (1/2,-1.73/2);
\draw[thick] (3,0) -- (3/2,3*1.73/2);
\draw[thick] (3/2,3*1.73/2) -- (-3/2,3*1.73/2);
\draw[thick] (-3/2,3*1.73/2) -- (-3,0);
\draw[thick] (-3,0) -- (-3/2,-3*1.73/2);
\draw[thick] (-3/2,-3*1.73/2) -- (3/2,-3*1.73/2);
\draw[thick] (3/2,-3*1.73/2) -- (3,0);
\draw[dashed] (0,0) -- (7/2,1.73*7/6);
\draw[dashed] (0,0) -- (0,1.73*2);
\draw[dashed] (0,0) -- (-7/2,1.73*7/6);
\draw[dashed] (0,0) -- (-7/2,-1.73*7/6);
\draw[dashed] (0,0) -- (0,-1.73*2);
\draw[dashed] (0,0) -- (7/2,-1.73*7/6);
\end{tikzpicture}
\caption{Polytope $P(X_1,-K_{X_1})$}
\label{PX1}
\end{figure}

Brion computed the automorphism group $\mathrm{Aut}(X)$ of all wonderful compactifications $X$ of adjoint semisimple 
groups \cite[Example 2.4.5]{Bri07}. 
In particular, the connected component of the identity $\mathrm{Aut}^0(X_1)\subset \mathrm{Aut}(X_1)$
is the image of $\mathrm{PGL}_3(\mathbb{C})\times \mathrm{PGL}_3(\mathbb{C})$.
This shows that the manifold $X_1$ is not homogeneous under a bigger group, and not toric.
The same statement is true for any wonderful compactification of an adjoint semisimple 
group with no factor of rank one.
\end{exa}

\begin{exa}
Consider now the simply connected semisimple group $\mathrm{Sp}_4(\mathbb{C})$. 
It is the subgroup of $\mathrm{GL}_4(\mathbb{C})$ whose elements are the matrices 
$M$ such that $M^t\Omega M= \Omega$, where $M^t$ denotes the transpose of $M$ 
and 
\[
\Omega = 
\begin{pmatrix}
0 & 1_n \\
1_n & 0 
\end{pmatrix}.
\]
It is a semisimple group of type $B_2$, which means that its associated root 
system is the root system $B_2$. 
While it is not an adjoint group, it also admits a wonderful compactification 
(see for example \cite{GR13}). 
Let $X_2$ be the blow up of this wonderful compactification along the closed orbit. 
Then $X_2$ is a Fano compactification of $\mathrm{Sp}_4(\mathbb{C})$ \cite{Ruz12}.

The moment polytope associated to the anticanonical line bundle is obtained easily 
in the following way: Ruzzi classifies 
in \cite{Ruz12} all Fano compactifications of reductive groups of rank two, 
by giving for each the usual data identifying a spherical variety. 
Then we can use the description by Gagliardi and Hofscheier \cite{GH15} 
of the moment polytope of the anticanonical line bundle in these terms. 
Figure~\ref{PX2} represents the polytope $P(X_2,-K_{X_2})$, along with the Weyl chambers 
decomposition, and the weight lattice $M$. 

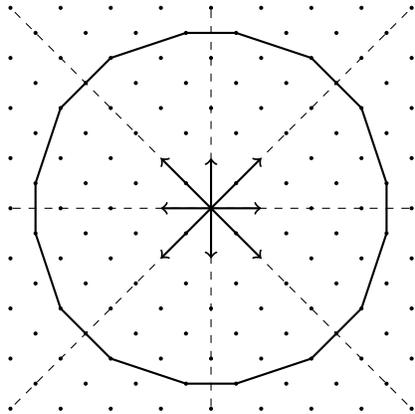
\begin{figure}
\centering
\begin{tikzpicture}[scale=2/3]
\foreach \l in {-4,-3,...,4}
{
\foreach \k in {-4,-3,...,4}
	{\draw[black, fill=black] (\k,\l) circle (0.03);}}
\foreach \l in {-7,-5,...,7}
{
\foreach \k in {-7,-5,...,7}
	{\draw[black, fill=black] (\k /2,\l /2) circle (0.03);}}
\draw[white, fill=white] (1,0) circle (0.04);
\draw[white, fill=white] (1,1) circle (0.04);
\draw[white, fill=white] (0,1) circle (0.04);
\draw[white, fill=white] (-1,1) circle (0.04);
\draw[white, fill=white] (-1,0) circle (0.04);
\draw[white, fill=white] (-1,-1) circle (0.04);
\draw[white, fill=white] (0,-1) circle (0.04);
\draw[white, fill=white] (1,-1) circle (0.04);
\draw[->,thick] (0,0) -- (1,0);
\draw[->,thick] (0,0) -- (1,1);
\draw[->,thick] (0,0) -- (0,1);
\draw[->,thick] (0,0) -- (-1,1);
\draw[->,thick] (0,0) -- (-1,0);
\draw[->,thick] (0,0) -- (-1,-1);
\draw[->,thick] (0,0) -- (0,-1);
\draw[->,thick] (0,0) -- (1,-1);
\draw[thick] (2,3) -- (1/2,7/2) -- (-1/2,7/2) -- (-2,3) -- (-3,2) -- (-7/2,1/2);
\draw[thick] (-7/2,1/2) -- (-7/2,-1/2) -- (-3,-2) -- (-2,-3) -- (-1/2,-7/2);
\draw[thick] (-1/2,-7/2) -- (1/2,-7/2) -- (2,-3) -- (3,-2) -- (7/2,-1/2);
\draw[thick] (7/2,-1/2) -- (7/2,1/2) -- (3,2) -- (2,3);
\draw[dashed] (0,0) -- (4,0);
\draw[dashed] (0,0) -- (4,4);
\draw[dashed] (0,0) -- (0,4);
\draw[dashed] (0,0) -- (-4,4);
\draw[dashed] (0,0) -- (-4,0);
\draw[dashed] (0,0) -- (-4,-4);
\draw[dashed] (0,0) -- (0,-4);
\draw[dashed] (0,0) -- (4,-4);
\end{tikzpicture}
\caption{Polytope $P(X_2,-K_{X_2})$}
\label{PX2}
\end{figure}

For this manifold also, $\mathrm{Aut}^0(X_2)$ is the image of 
$\mathrm{Sp}_4(\mathbb{C})\times \mathrm{Sp}_4(\mathbb{C})$.
This is proved using Blanchard's lemma \cite[Proposition I.1]{Bla56} applied to the blow up, 
and the fact that the connected component of the identity in the automorphism group of 
the wonderful compactification of $\mathrm{Sp}_4(\mathbb{C})$ is also the image of 
$\mathrm{Sp}_4(\mathbb{C})\times \mathrm{Sp}_4(\mathbb{C})$ \cite{Pez09}. 
\end{exa}

\subsection{Convex potentials}  

Let $(X,L)$ be a polarized compactification of $G$. 
We recall in this section how to associate to a $K\times K$-invariant hermitian metric 
with positive curvature on $L$ a 
convex function, called its convex potential (see \cite[Section 2.2]{DelLCT}). 
We let $P:=P(X,L)$, $P^+:=P^+(X,L)$, and let $v$ denote the support function of the 
dilated polytope $2P$, defined by $v(x) = \mathrm{sup}\{\left<p,x\right>~;~ p\in 2P\}.$

Let $s$ be a $G\times \{e\}$-equivariant trivialization of $L|_G$, where we identify $G$ with the 
open $G\times G$-orbit in $X$.
Let $h$ be a smooth, $K\times K$-invariant, positively curved hermitian metric on $L$. 
 Let $\varphi$ be the local potential of $h$ with respect to the 
trivialization $s$, defined for $g\in G$ by: 
\[
\varphi(g):=-\ln|s(g)|_h^2.
\]
Then the function $\varphi$ is a smooth strictly plurisubharmonic function on $G$, 
$K\times K$-invariant 
by \cite[Proposition 2.2]{DelLCT}.

The function $u$ defined on $\mathfrak{a}$ by $u(x)=\varphi(\exp x)$ is then called the 
\emph{convex potential} 
of $h$ and satisfies the following properties.

\begin{prop}
\label{Potentials}
Let $h$ be a smooth $K\times K$-invariant hermitian metric with positive curvature on $L$, and let 
$u$ be its convex potential. Then $u$ is a $W$-invariant, smooth and strictly convex function 
such that:
\begin{enumerate}
\item $u(x)\geq v(x)+C_1$ for all $x\in \mathfrak{a}$, for some constant $C_1$ depending on $u$;
\item given any $x_0\in \mathfrak{a}$, $u(x)\leq v(x-x_0)+u(x_0)$ for all $x\in \mathfrak{a}$;
\item the gradient $\nabla u$ of $u$ defines a diffeomorphism from $\mathfrak{a}$ to the interior of $2P$;
\item in particular, there exists a constant $d$ depending only on $L$ such that $|\nabla u(x)|\leq d$ for all $x\in \mathfrak{a}$; 
\item the restriction of  $\nabla u$ to $\mathfrak{a}^+$ is a diffeomorphism to the interior of $2P^+$.
\end{enumerate}
\end{prop}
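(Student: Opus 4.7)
The plan is to reduce most properties to the classical toric setting by restricting to the closure $Z$ of $T$ in $X$, and then to exploit $W$-invariance to refine the statements on $\mathfrak{a}^+$.

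First, $W$-invariance and smoothness of $u$ are relatively straightforward. The Weyl group $W=N_K(T)/T$ is represented by elements of $K$, so $K\times K$-invariance of $\varphi$ translates under $\exp$ into $W$-invariance of $u$, while smoothness of $u$ follows from that of $\varphi$ together with the fact that $\exp$ is a local diffeomorphism from $\mathfrak{a}$. Strict convexity of $u$ is a consequence of strict plurisubharmonicity of $\varphi$ via the Hessian computation of Section~\ref{sec_CMA}: the diagonal block of $\mathrm{Hess}_{\mathbb{C}}(\varphi)$ corresponding to the $\mathfrak{s}$-directions is exactly $\frac{1}{4}\mathrm{Hess}_{\mathbb{R}}(u)$, so positive definiteness of the complex Hessian forces strict convexity of $u$.

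For properties (1) through (4), I would observe that $h$ restricts to a smooth, $S\times S$-invariant, positively curved hermitian metric on the toric polarization $(Z,L|_Z)$, whose associated polytope is exactly $P$. Up to the normalization factor accounting for the dilation $2P$, the function $u$ is the classical toric convex potential of this restricted metric. Properties (1) and (2), namely lower and upper bounds by translates of the support function $v$, are then standard: (2) follows from comparison of $u$ with the local potentials of weight-monomial sections, which are affine functions controlled by $v$ shifted appropriately, and (1) follows because $u-v$ is a difference of two potentials of positively curved metrics on a compact toric variety, hence bounded below. Property (3), that $\nabla u$ is a diffeomorphism onto $\mathrm{int}(2P)$, is Legendre duality for a smooth strictly convex function whose asymptotic slope is prescribed by $v$, and (4) is then an immediate consequence of $\nabla u(\mathfrak{a})\subset 2P$.

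Finally, (5) would follow from the $W$-invariance. Strict convexity forces injectivity of $\nabla u$ on $\mathfrak{a}^+$, and the $W$-equivariance $\nabla u(wx)=w\,\nabla u(x)$ combined with (3) shows that $\nabla u$ maps each open Weyl chamber diffeomorphically onto one of the chambers of the decomposition of $\mathrm{int}(2P)$ induced by $W$. To identify $\nabla u(\mathfrak{a}^+)$ as $\mathrm{int}(2P^+)$ rather than some other $W$-translate, I would use the asymptotics from (2): for $x\in\mathfrak{a}^+$ tending to infinity, $u(x)$ is close to $v(x)=\max_{p\in 2P^+}\langle p,x\rangle$, so the gradient must asymptotically approach a face of $2P^+$, hence lie in $\overline{\mathfrak{a}^+}$. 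The main obstacle in this outline is this last step, namely the precise matching in (5) of the convex-analytic image of $\nabla u$ with the correct Weyl-chamber sector of $\mathrm{int}(2P)$.
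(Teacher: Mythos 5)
Your outline is in the spirit of the paper's proof, and your observation that strict convexity drops out of the Hessian computation of Section~\ref{sec_CMA} (the $\mathfrak{s}$-block of $\mathrm{Hess}_{\mathbb{C}}(\psi)$ being $\frac{1}{4}\mathrm{Hess}_{\mathbb{R}}(f)$) is a clean alternative to the paper's citation of Azad--Loeb. The reduction of (1), (3), (4) to the toric picture on $Z$ is also essentially what the paper does, via \cite[Theorem~2.4]{DelLCT} and the convex conjugate.

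However, there is a real gap in your treatment of (2). What you describe — comparison of $u$ with the affine local potentials of weight sections — yields only the weaker two-sided bound $v(x)+C_1 \leq u(x) \leq v(x)+C_2$ for unspecified constants. Statement (2) is strictly sharper: for \emph{every} $x_0$, the inequality $u(x)\leq v(x-x_0)+u(x_0)$ holds with the translate of $v$ centered at $x_0$ and the constant equal to $u(x_0)$, not merely some $C_2$. The paper obtains this from the two-sided bound together with an additional convexity argument: for $x_0$ fixed and $y\neq 0$, the difference quotient $\frac{u(x_0+ty)-u(x_0)}{t}$ increases in $t$ by convexity and converges to $v(y)$ as $t\to\infty$ (because $u$ is squeezed between $v+C_1$ and $v+C_2$), hence is $\leq v(y)$ for all $t>0$. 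This sharp form is what is actually used later (e.g.\ in Lemma~\ref{lem_est_m_x}, with $x_0=x_t$). Your outline as written does not recover it.

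On (5), you flag the identification of $\nabla u(\mathfrak{a}^+)$ with $\mathrm{Int}(2P^+)$ as the main obstacle and propose an asymptotic argument. The paper's route is more direct and purely formal: $W$-invariance of $u$ gives $W$-equivariance $\nabla u(wx)=w\,\nabla u(x)$, so each reflection wall is mapped into itself; combined with the convexity inequality $\langle \nabla u(x), x-wx\rangle\geq 0$ (applied to simple reflections $w=s_\alpha$ and $x\in\mathfrak{a}^+$), one gets $\nabla u(\mathfrak{a}^+)\subset\overline{\mathfrak{a}^+}$, and injectivity then pushes this into the open chamber. Your asymptotic approach would require more work to make rigorous, and is unnecessary.
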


\begin{proof}
The fact that $u$ is a $W$-invariant smooth and strictly convex function follows from its definition and \cite{AL92}. 

A smooth metric has locally bounded potentials, so \cite[Theorem 2.4]{DelLCT} 
implies that there exists constants $C_1$ and $C_2$ 
depending on $u$ such that 
\[
v(x)+C_1\leq u(x) \leq v(x)+C_2.
\]
In particular, the first point is proved. Let us prove the second point.

Let $x_0\in \mathfrak{a}$. For any $0\neq y\in \mathfrak{a}$, consider the slope 
\[
\frac{u(x_0+ty)-u(x_0)}{t},
\] 
with $t>0$.
By convexity and the two previous inequalities, we see that this 
slope increases and converges to $v(y)$ as $t$ tends to infinity. 
This shows that for any $x=x_0+y \in \mathfrak{a} \setminus \{x_0\}$, we have 
\[
u(x)\leq v(x-x_0)+u(x_0).
\]
This inequality is obviously also satisfied at $x_0$, so the second point is proved.

Since $u$ is smooth and strictly convex, $\nabla u$ is a diffeomorphism. 
It remains to determine its image. 
It is clear that this image is open and included in $2P$ (for example because the domain of 
the convex conjugate of $u$ is $2P$, see \cite[Proposition 3.12]{DelLCT}). 
Assume now that one point from the interior of $2P$ is not attained by $\nabla u$.
Then by convexity, we can find a $0\neq y\in \mathfrak{a}$ and an $\epsilon>0$ 
such that $\forall x\in \mathfrak{a}$,
$v(y) - \left<\nabla u(x),y\right> > \epsilon$. 
But considering the ray starting from any $x_0$ and 
in the direction $y$ as above leads to a contradiction. 
This implies that $\nabla u (\mathfrak{a})= \mathrm{Int}(2P)$. 
By $W$-invariance, we also have $\nabla u (\mathfrak{a}^+)= \mathrm{Int}(2P^+)$.

Finally, the fourth point is a direct consequence of the third because the polytope $2P$ is bounded.
\end{proof}

\begin{rem}
We fixed a choice of a $G\times \{e\}$-equivariant trivialization of $L$ on $G$ to define the convex 
potential. All such trivializations are non zero scalar multiples of one another, so the convex potentials
defined with respect to two different such trivializations differ by a constant scalar independent of the metric.
In particular, it does not change the asymptotic behavior. 
\end{rem}

\subsection{Degree of an ample linearized line bundle}

\begin{prop}
\label{degrees}
Let $(X,L)$ be a polarized compactification of $G$, and let $P^+=P^+(X,L)$.
Then 
\[
\mathrm{deg}(L)=C\int_{2P^+} \prod_{\alpha \in \Phi^+}\left<\alpha , p\right>^2dp
\]
for some constant $C$ depending only on the group $G$.
Furthermore, if $u$ is the convex potential of a smooth positively curved $K\times K$-invariant 
metric on $L$, then 
\[
\mathrm{deg}(L)=
C\int_{\mathfrak{a}^+} \prod_{\alpha \in \Phi^+}\left<\alpha ,\nabla u(a)\right>^2
\mathrm{MA}_{\mathbb{R}}(u)(a)da.
\]
\end{prop}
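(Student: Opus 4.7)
My plan is to compute the degree $\deg(L) = \int_X c_1(L,h)^n$ for $h$ a smooth $K\times K$-invariant positively curved metric, and reduce it to an integral on the positive Weyl chamber via $KAK$-integration, then change variables using the gradient of the convex potential.

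First I would choose such an $h$ (existence is classical for ample $L$ by averaging any positively curved metric over $K\times K$). Since the boundary $X\setminus G$ has measure zero, $\deg(L) = \int_G c_1(L,h)^n$, and on $G$ the form $c_1(L,h) = \frac{i}{2\pi}\partial\overline{\partial}\varphi$ where $\varphi(g) = -\ln|s(g)|_h^2$ for the equivariant trivialization $s$. Remark~\ref{ChoiceofHaar} then gives
\[
\deg(L) = C_1 \int_G \mathrm{MA}_{\mathbb{C}}(\varphi)(g)\, dg
\]
for a constant $C_1>0$ depending only on $G$, provided one checks integrability on $G$ of the full mass.

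Next I would invoke the $KAK$ integration formula for reductive groups (standard, see Helgason), which states that for a $K\times K$-invariant integrable function $F$ on $G$,
\[
\int_G F(g)\, dg = C_2 \int_{\mathfrak{a}^+} F(\exp a)\, J(a)\, da,
\]
with $J(a) = \prod_{\alpha\in\Phi^+}\sinh^2\langle\alpha,a\rangle$ (up to a constant absorbed in $C_2$). Plugging in Corollary~\ref{MAC}, the factor $J(a)$ cancels precisely, yielding
\[
\deg(L) = C_3 \int_{\mathfrak{a}^+} \mathrm{MA}_{\mathbb{R}}(u)(a) \prod_{\alpha\in\Phi^+} \langle \alpha, \nabla u(a)\rangle^2\, da,
\]
which is the second asserted formula.

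For the first formula, I would change variables via $p = \nabla u(a)$. By Proposition~\ref{Potentials}(5), this is a diffeomorphism from $\mathfrak{a}^+$ onto the interior of $2P^+$, with Jacobian determinant $\mathrm{MA}_{\mathbb{R}}(u)(a)$ since $u$ is strictly convex. The substitution transforms the integral into $C \int_{2P^+} \prod_{\alpha\in\Phi^+} \langle \alpha, p\rangle^2\, dp$, with the same constant $C$ as in the $\mathfrak{a}^+$-formula; in particular, this integral is independent of the metric, consistent with $\deg(L)$ being a topological invariant. The main technical obstacle is justifying the integration formula and the change of variables near the walls of $\mathfrak{a}^+$ and near infinity: the factors $\langle\alpha,\nabla u(a)\rangle$ vanish on walls (by $W$-invariance of $u$) and $\nabla u$ is bounded (Proposition~\ref{Potentials}(4)), so the integrand is bounded; one also needs that $\mathrm{MA}_{\mathbb{R}}(u)$ decays fast enough at infinity, which follows from the asymptotic control $u(x) \geq v(x) + C_1$ together with $|\nabla u| \leq d$ in Proposition~\ref{Potentials}, ensuring integrability and validating the change of variables.
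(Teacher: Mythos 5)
Your approach matches the paper's own proof closely: restrict the degree integral to the open orbit $G$, choose the Haar measure via Remark~\ref{ChoiceofHaar}, apply the $KAK$ integration formula and Corollary~\ref{MAC} so that $J(a)$ cancels, then change variables via $p = \nabla u(a)$ using Proposition~\ref{Potentials}. The only difference is that you spell out the integrability concerns near the Weyl walls and at infinity, which the paper leaves implicit.
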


\begin{proof}
Let $h$ be a smooth positively curved $K\times K$-invariant hermitian metric on $L$, 
with curvature the Kähler form $\omega \in 2\pi c_1(L)$. Then we have 
\[
\mathrm{deg}(L)  = \int_X \left(\frac{\omega}{2\pi} \right)^n. 
\]

Let $s$ be a $G\times \{e\}$-equivariant section of $L$, and $\varphi$ the potential 
of $h$ with respect to $s$. We have $\omega = i \partial \overline{\partial} \varphi$ 
on $G$. 
Using Remark~\ref{ChoiceofHaar}, Choose a Haar volume form $dg$ on $G$ so that, 
independently of $\varphi$,
\[
\left(\frac{i}{2\pi}\partial \overline{\partial} \varphi\right)^n =
 \mathrm{MA}_{\mathbb{C}}(\varphi) dg,
\]
where $ \mathrm{MA}_{\mathbb{C}}(\varphi)$ is the Monge-Ampère of $\varphi$ in the coordinates 
chosen in Section~\ref{CHess}. 
Then 
\begin{align*}
\mathrm{deg}(L) & = \int_X \left(\frac{\omega}{2\pi} \right)^n  = \int_G \left(\frac{\omega}{2\pi} \right)^n \\
& = \int_G \left(\frac{i}{2\pi} \partial \overline{\partial} \varphi \right)^n \\
& = \int_G \mathrm{MA}_{\mathbb{C}}(\varphi) dg. \\
\intertext{By the $KAK$ integration formula, this is, for a constant $C$ depending only on the choice of Haar measure,}
\mathrm{deg}(L) & = C \int_{\mathfrak{a}^+}  \mathrm{MA}_{\mathbb{C}}(\varphi)(\exp a) J(a)da \\
\intertext{where $da$ is the Lebesgue measure on $\mathfrak{a}$ normalized by $N$. 
Let $u$ denote the convex potential of $h$.
From Corollary~\ref{MAC} we obtain that this is}  
\mathrm{deg}(L) & = C' \int_{\mathfrak{a}^+} \prod_{\alpha\in \Phi^+}\left<\alpha ,\nabla u(a)\right>^2 \mathrm{MA}_{\mathbb{R}}(u)(a)  da  \\
\intertext{for a constant $C'$ still depending only on $G$. We then use the variable change $p=\nabla u(a)$, and obtain, by 
Proposition~\ref{Potentials}, }
\mathrm{deg}(L) & = C' \int_{2P^+}  \prod_{\alpha\in \Phi^+}\left<\alpha , p\right>^2dp.
\qedhere
\end{align*}
\end{proof}

\section{Strategy of proof}
\label{sec_strategy}

We describe in this section the global strategy of the proof, starting by recalling 
classical results on the continuity method. We then determine the expression of the 
continuity method equation in restriction to $G$, in terms of convex potentials. 
We finally introduce the function $\nu_t$ and gather some estimates on this function 
or on $-\ln J$ that will be used later.

\subsection{The continuity method}
Let $X$ be a Fano manifold.
Fix a reference Kähler form $\omega_{\mathrm{ref}}$ in the class $2\pi c_1(X)$.
The Kähler forms in $2\pi c_1(X)$ can all be written as $\omega_{\mathrm{ref}}+i\partial \overline{\partial} \psi$ 
with $\psi$ a smooth and $\omega_{\mathrm{ref}}$-strictly psh function on $X$ (i.e. such that 
$\omega_{\mathrm{ref}}+i\partial \overline{\partial} \psi >0$).

The Kähler-Einstein equation  $\mathrm{Ric}(\omega)=\omega$ on $X$ translates, 
in terms of $\omega_{\mathrm{ref}}$-psh functions, as
the Monge-Ampère equation
\begin{equation}
\label{KEeqn}
(\omega_{\mathrm{ref}} + i\partial \overline{\partial} \psi)^n=e^{f_{\mathrm{ref}}-\psi}\omega_{\mathrm{ref}}^n,
\end{equation}
where $f_{\mathrm{ref}}$ is the \emph{normalized Ricci potential} of $\omega_{\mathrm{ref}}$ defined as the $\omega_{\mathrm{ref}}$-psh
function that satisfies $\omega_{\mathrm{ref}}+i\partial \overline{\partial}f_{\mathrm{ref}} = \mathrm{Ric}(\omega_{\mathrm{ref}})$
and $\int_X e^{f_{\mathrm{ref}}}\omega_{\mathrm{ref}}^n 
= \int_X\omega_{\mathrm{ref}}^n$.

Let $h_{\mathrm{ref}}$ be a smooth hermitian metric on $-K_X$ with curvature form 
$\omega_{\mathrm{ref}}$. Then it determines a volume form $dV$ on $X$ defined in a local
trivialization $s$ of $-K_X$ by $dV=|s|_{h_{\mathrm{ref}}}^2s^{-1}\wedge \overline{s^{-1}}$.
Then up to a constant, the Ricci potential $f_{\mathrm{ref}}$ is equal to the logarithm
of the potential of $dV$ with respect to $\omega_{\mathrm{ref}}^n$. 
We choose $h_{\mathrm{ref}}$ (by multiplying by a scalar) such that $f_{\mathrm{ref}}$ 
is indeed equal to that.

The following family of equations is the one used in the usual continuity method for the 
Kähler-Einstein equation:
\begin{equation}
\label{Conteqn}
(\omega_{\mathrm{ref}}+i\partial  \overline{\partial} \psi_t)^n = 
e^{f_{\mathrm{ref}}-t\psi_t}\omega_{\mathrm{ref}}^n.
\end{equation}

To show the existence of a Kähler-Einstein metric on $X$, it is enough to show that 
the set $I$ of $0\leq t\leq 1$ such that this equation admits a solution is exactly 
$\left[0,1\right]$.

By the work of Aubin \cite{Aub76} 
and Yau \cite{Yau78}, $0\in I$, and $I$ is open.
Furthermore,  it is enough to know uniform a priori estimates on the $C^0$ norm 
of $\psi_t$, to ensure the closure of $I$, and thus the existence of a solution at $t=1$, 
i.e. a Kähler-Einstein metric.
We recall that by $C^0$ estimates, we mean, as in most of the literature, 
a uniform control on $\mathrm{sup}_{x\in X}|\psi_t(x)|$.
In fact, we can even concentrate only on a uniform upper bound on 
$\psi_t$ (see \cite[Proposition 2.1]{Siu88} or \cite[pages 235 and 236]{Tia87}).

\begin{nota}
\label{notat0}
Let us fix some $0<t_0\in I$, which exists since $0\in I$ and $I$ is open.  
\end{nota}
 
Let us summarize the consequence of what we have recalled in this section.

\begin{prop}
\label{psiest}
Assume that $[t_0,t_1[\subset I$, that $\psi_t$ denotes the solution at $t\in [t_0,t_1[$,  
and that there exists a constant $C$ such that 
$\psi_t\leq C$ $\forall t\in [t_0,t_1[$.
Then $t_1\in I$.
\end{prop}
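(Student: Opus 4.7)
The plan is to chain together the standard elliptic estimates for the continuity method, which is exactly what the discussion preceding the statement has set up.

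First, I would upgrade the one-sided bound to a two-sided $C^0$ bound. The hypothesis provides only $\psi_t \le C$, but the already cited results of Siu \cite{Siu88} and Tian \cite{Tia87} show that for solutions of \eqref{Conteqn} with $t \ge t_0 > 0$, an upper bound on $\psi_t$ automatically yields a uniform lower bound. The argument goes through integrating the equation (with normalization tied to $f_{\mathrm{ref}}$) and applying a Green function estimate, using crucially that $t_0 > 0$ to control $e^{-t\psi_t}$.

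Second, given the two-sided $C^0$ bound, the classical a priori estimates of Aubin \cite{Aub76} and Yau \cite{Yau78} apply. The Laplacian estimate proceeds from the maximum principle applied to a quantity of the form $e^{-\lambda\psi_t}(n+\Delta_{\omega_{\mathrm{ref}}}\psi_t)$, producing a uniform $C^2$ bound. Evans--Krylov then yields $C^{2,\alpha}$, and differentiating \eqref{Conteqn} together with Schauder theory bootstraps this to uniform $C^{k,\alpha}$ bounds for every $k$, all uniform for $t \in [t_0,t_1[$.

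Third, by Arzelà--Ascoli, any sequence $t_n \nearrow t_1$ admits a subsequence along which $\psi_{t_n}$ converges in $C^\infty$ to some smooth function $\psi_{t_1}$. Since the nonlinear operator
\[
\psi \longmapsto \frac{(\omega_{\mathrm{ref}}+i\partial\overline{\partial}\psi)^n}{\omega_{\mathrm{ref}}^n} - e^{f_{\mathrm{ref}}-t\psi}
\]
depends continuously on $(t,\psi)$ in the $C^\infty$ topology, passing to the limit shows $\psi_{t_1}$ solves \eqref{Conteqn} at $t=t_1$, and the positivity $\omega_{\mathrm{ref}} + i\partial\overline{\partial}\psi_{t_1} > 0$ survives the $C^2$ convergence. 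Hence $t_1 \in I$.

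The only step with real content is the first: extracting a lower bound from the upper bound. Everything else is by now completely standard machinery, and the author presumably is content to invoke it by reference. No additional difficulty is introduced here by the $G\times G$-equivariant setting, because $K\times K$-invariance of the solutions (which will be used crucially later) does not interfere with any of these pointwise elliptic arguments.
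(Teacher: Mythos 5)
Your proposal is correct and is essentially the paper's approach spelled out: the paper states Proposition~\ref{psiest} without its own argument, as a summary of the continuity-method facts it has just recalled — openness via Aubin--Yau, closedness from $C^0$ a~priori bounds via the usual $C^2$ / Evans--Krylov / Schauder bootstrap, and the reduction of the two-sided $C^0$ bound to a one-sided upper bound (using $t \geq t_0 > 0$) by citing Siu \cite{Siu88} and Tian \cite{Tia87}. The only minor imprecision is the remark that strict positivity of $\omega_{\mathrm{ref}} + i\partial\overline{\partial}\psi_{t_1}$ ``survives the $C^2$ convergence''; a priori the limit is only semi-positive, and strict positivity is recovered from the limiting equation itself, whose right-hand side is strictly positive.
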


\subsection{Reduction to the open orbit}

Suppose now that $X$ is a $G\times G$-equivariant smooth and Fano compactification of $G$. 
Let $P$ be the polytope associated to the anticanonical bundle $-K_X$.

We choose $\omega_{\mathrm{ref}}$ and $h_{\mathrm{ref}}$ $K\times K$-invariant.
Then a solution of equation~\eqref{Conteqn} at $t<1$ is $K\times K$-invariant.
This follows from the uniqueness result for twisted (or generalized) 
Kähler-Einstein metrics (see \cite[Corollary 1.4]{ZZ14} or \cite{DS15}).  

By continuity of the solutions $\psi_t$, it is enough to prove a uniform upper bound on the 
restrictions of $\psi_t$ to the open and dense orbit $G\subset X$.
Let $\varphi_t$ denote the function on $\mathfrak{a}$ induced by $\psi_t$. 
It is enough to 
give an upper bound for $\varphi_t$.
We also denote by $h_t$ the hermitian metric on $-K_X$ whose potential with respect 
to $h_{\mathrm{ref}}$ is $\psi_t$ and whose curvature form is 
$\omega_{\mathrm{ref}}+i\partial  \overline{\partial} \psi_t$.

Any $G\times \{e\}$-equivariant trivialization $s$ of $-K_G$ defines a left $G$-invariant
volume form $s^{-1} \wedge \overline{s^{-1}}$ on $G$, and so a Haar volume form.
All Haar volume forms on $G$ are obtained this way. 
By Remark~\ref{ChoiceofHaar}, we can thus choose $s$ and the corresponding Haar volume form 
$dg:= s^{-1} \wedge \overline{s^{-1}}$ such that 
for all smooth $K\times K$-invariant function $\psi$ on $G$, 
\[
(i\partial \overline{\partial} \psi)^n 
= \mathrm{MA}_{\mathbb{C}}(\psi)dg,
\]
where $\mathrm{MA}_{\mathbb{C}}(\psi)$ is the complex Monge-Ampère of $\psi$ 
in the local coordinates given in Section~\ref{CHess}.

Let $u_{\mathrm{ref}}$ be the convex potential of $h_{\mathrm{ref}}$ with respect to 
the trivialization $s$, defined on $\mathfrak{a}$, and denote by   
$u_t$ the convex function $u_{\mathrm{ref}}+\varphi_t$ 
which is the convex potential of the metric $h_t$. 
Finally, we denote by $w_t$ the function $tu_t+(1-t)u_{\mathrm{ref}}$. 

\begin{prop}
\label{eqnG}
Suppose $\psi_t$ is a  $K\times K$-invariant solution of equation \eqref{Conteqn}. 
Then for $x\in \mathfrak{a}$,
\begin{equation}
\mathrm{MA}_{\mathbb{R}}(u_t)(x)\prod_{\alpha \in  \Phi^+}\left<\alpha , \nabla u_t(x)\right>^2 
= e^{-w_t(x)}J(x).
\end{equation}
\end{prop}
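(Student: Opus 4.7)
The plan is to evaluate both sides of the continuity method equation \eqref{Conteqn} on the open orbit $G\subset X$ at a point $\exp x$ with $x\in\mathfrak{a}^+$, translating everything into data on the positive Weyl chamber via $K\times K$-invariance, and then invoking Corollary~\ref{MAC}.

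For the left-hand side, I observe that on $G$ one can write $\omega_{\mathrm{ref}}=i\partial\overline{\partial}\varphi_{\mathrm{ref}}$, where $\varphi_{\mathrm{ref}}=-\log|s|_{h_{\mathrm{ref}}}^2$ is the local potential of $h_{\mathrm{ref}}$ with respect to the chosen $G\times\{e\}$-equivariant trivialization $s$. Setting $\tilde\varphi_t:=\varphi_{\mathrm{ref}}+\psi_t$, the global potential of $h_t$ on $G$, one has $\omega_{\mathrm{ref}}+i\partial\overline{\partial}\psi_t=i\partial\overline{\partial}\tilde\varphi_t$; by our choice of $s$ matching $dg$, $(i\partial\overline{\partial}\tilde\varphi_t)^n=\mathrm{MA}_{\mathbb{C}}(\tilde\varphi_t)\,dg$. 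The $W$-invariant function on $\mathfrak{a}$ associated to $\tilde\varphi_t$ is precisely $u_t=u_{\mathrm{ref}}+\varphi_t$, so Corollary~\ref{MAC} at $x\in\mathfrak{a}^+$ expresses the left-hand side as a constant multiple of $J(x)^{-1}\mathrm{MA}_{\mathbb{R}}(u_t)(x)\prod_{\alpha\in\Phi^+}\left<\alpha,\nabla u_t(x)\right>^2\,dg$.

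For the right-hand side, I convert $\omega_{\mathrm{ref}}^n$ into a multiple of $dg$ using the normalization of $h_{\mathrm{ref}}$. By construction, $\omega_{\mathrm{ref}}^n=e^{-f_{\mathrm{ref}}}dV$, and on $G$ one has $dV=|s|_{h_{\mathrm{ref}}}^2\,s^{-1}\wedge\overline{s^{-1}}=e^{-\varphi_{\mathrm{ref}}}\,dg$. Hence $e^{f_{\mathrm{ref}}-t\psi_t}\omega_{\mathrm{ref}}^n=e^{-t\psi_t-\varphi_{\mathrm{ref}}}\,dg$ on $G$. Evaluating at $\exp x$ and using $\psi_t(\exp x)=\varphi_t(x)=u_t(x)-u_{\mathrm{ref}}(x)$, the exponent becomes $-\bigl(tu_t(x)+(1-t)u_{\mathrm{ref}}(x)\bigr)=-w_t(x)$ by definition of $w_t$.

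Equating the two expressions and cancelling the common factor $dg$ yields the claim; the universal numerical factor $4^{-(r+p)}$ arising from Corollary~\ref{MAC} is harmlessly absorbed by an additive shift of the reference convex potential $u_{\mathrm{ref}}$, which does not affect its convexity or gradient. There is no real obstacle in this proof: it is a bookkeeping exercise whose only subtlety lies in carefully tracking the three matched normalizations (the trivialization $s$ adjusted so that Corollary~\ref{MAC} applies against $dg$, the reference metric $h_{\mathrm{ref}}$ adjusted so that $f_{\mathrm{ref}}$ is the unnormalized log-density, and the identification of $t\psi_t+\varphi_{\mathrm{ref}}$ with $w_t$ via the convex potential viewpoint).
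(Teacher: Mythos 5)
Your proposal matches the paper's proof step for step: on $G$ you write $\omega_{\mathrm{ref}}+i\partial\overline{\partial}\psi_t = i\partial\overline{\partial}(\psi_{\mathrm{ref}}+\psi_t)$, apply Corollary~\ref{MAC} to the associated $W$-invariant function $u_t$, rewrite $e^{f_{\mathrm{ref}}}\omega_{\mathrm{ref}}^n = e^{-\psi_{\mathrm{ref}}}dg$, and identify $t\varphi_t + u_{\mathrm{ref}}$ with $w_t$, exactly as the paper does. You are actually slightly more careful than the text in flagging the $4^{-(r+p)}$ constant from Corollary~\ref{MAC} — the paper silently drops it — and your remark that a constant additive renormalization of $u_{\mathrm{ref}}$ (equivalently, a rescaling of the section $s$) absorbs it without affecting $\mathrm{Hess}\,u_t$ or $\nabla u_t$ is the right justification.
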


\begin{proof}
Let $\psi_{\mathrm{ref}}$ be the potential of the reference metric $h_{\mathrm{ref}}$ with 
respect to 
the section $s$.
By the choice of $s$, we have, on $G$,
\begin{align*}
(\omega_{\mathrm{ref}}+i\partial  \overline{\partial} \psi_t)|_G^n & = (i\partial \overline{\partial}(\psi_{\mathrm{ref}} + \psi_t))^n \\
 & = \mathrm{MA}_{\mathbb{C}}(\psi_{\mathrm{ref}} + \psi_t)dg
\end{align*}

Now the computation of the complex Monge-Ampère in local coordinates from 
Section~\ref{CHess}
gives 
\[
(\omega_{\mathrm{ref}}+\partial  \overline{\partial} \psi_t)^n(\exp x)  = 
	\mathrm{MA}_{\mathbb{R}}(u_t)(x)
		\prod_{\alpha \in  \Phi^+}\left<\alpha , \nabla u_t(x)\right>^2 
			\frac{1}{J(x)} dg 
\]
for $x\in \mathfrak{a}^+$.

On the other hand, the definition of the normalized Ricci potential 
$f_{\mathrm{ref}}$ implies 
that 
\[
e^{f_{\mathrm{ref}}}\omega_{\mathrm{ref}}^n=e^{-\psi_{\mathrm{ref}}} dg,
\]
which allows to write the right hand side of equation \eqref{Conteqn} as 
\[ 
e^{f_{\mathrm{ref}}-t\psi_t}\omega_{\mathrm{ref}}^n  = 
e^{-t\psi_t -\psi_{\mathrm{ref}}} dg. 
\]
For $x\in \mathfrak{a}$, we have 
\begin{align*}
(-t\psi_t -\psi_{\mathrm{ref}})(\exp x) & =-t\varphi_t(x) - u_{\mathrm{ref}}(x) \\
 & = -tu_t(x)-(1-t)u_{\mathrm{ref}}(x) \\
 & = -w_t(x).
\end{align*}

In conclusion, at a point $\exp(x)$ for $x\in \mathfrak{a}^+$, equation \eqref{Conteqn} 
reads
\[
\mathrm{MA}_{\mathbb{R}}(u_t)(x) \prod_{\alpha \in  \Phi^+}\left<\alpha , \nabla u_t(x)\right>^2 
		\frac{1}{J(x)} dg  = 
e^{-w_t(x)} dg.
\]
It is equivalent to the equality of the potentials with respect to $dg$.
Furthermore, by multiplying both sides by $J(x)$, we obtain the equation of the statement, 
that is well defined 
on the whole of $\mathfrak{a}$, and it is satisfied by $W$-invariance and smoothness.
\end{proof}

\subsection{Strategy}

To find a uniform upper bound for $\varphi_t$ we will introduce another function $\nu_t$, 
and study this function, 
following the strategy used by Wang and Zhu in the toric case.
More precisely, let $j$ be the function on the open Weyl chamber $\mathfrak{a}^+$
defined by $j(x)=-\ln J(x)$.
We consider the function $\nu_t= w_t+j$ defined on $\mathfrak{a}^+$.
We will show that it is a strictly convex function on $\mathfrak{a}^+$.
It is proper in the following sense:
As $x$ goes to infinity, or $x$ goes to a wall of $\mathfrak{a}^+$, 
$\nu_t(x)$ goes to infinity. 

These two properties of $\nu_t$ imply that it admits a unique minimum.
Let $m_t$ be the minimum of $\nu_t$ and $x_t$ be the point of $\mathfrak{a}^+$
where this minimum is attained. 
We will obtain estimates on both the value $m_t$ of the minimum and on 
the distance from the origin $|x_t|$ of the point where it is attained.
Then we need to relate these estimates with the function that we want to control.
Namely we will go from $\nu_t$ to $w_t$ then $u_t$ and finally $\varphi_t$.

To summarize, the strategy to prove estimates on $\varphi_t$ is in three steps:
\begin{itemize}
\item reduce to estimates on  $|m_t|$ and $|x_t|$,
\item find uniform estimates $|m_t|\leq C$, 
\item get a uniform control $|x_t|\leq C$ of $x_t$.
\end{itemize}
We will also have to prove the necessity of 
the condition in Theorem~A
and the upper bound on $R(X)$.
Before that, we gather some preliminary results.  

\subsection{The function $j=-\ln J$}
The half sum of positive roots $\rho$ is in $\mathfrak{a}^+$, so
$\left<\alpha ,\rho\right>>0$ for all $\alpha\in \Phi^+$. 
We will use this as a reference to control the distance to the walls of $\mathfrak{a}^+$.
We also choose a basis $\{e_i\}$ of $\mathfrak{a}$, 
and corresponding coordinates $\{x_i\}$ when necessary.

\begin{lem}
\label{Dominate}
There exists a constant $c$ such that for any $x\in \mathfrak{a}^+$, we have
\[
j(x)\geq \left<-4\rho,x\right>+c.
\]
Furthermore, for any $b>0$, there exists constants $C=C(b)$, $C'=C'(b)$ such that 
for all $x\in b\rho +\mathfrak{a}^+$, 
\[
j(x)\leq \left<-4\rho,x\right>+C \leq C'.
\]
\end{lem}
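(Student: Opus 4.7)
The plan is to unwind the definition $j(x) = -2\sum_{\alpha \in \Phi^+}\ln \sinh\langle\alpha,x\rangle$ and exploit elementary bounds on $\sinh$, using the key identity $2\rho = \sum_{\alpha \in \Phi^+}\alpha$, which converts a sum over positive roots into the linear form $\langle 2\rho,x\rangle$.

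For the lower bound, I would start from the inequality $\sinh y \leq \tfrac{1}{2}e^y$, valid for all $y \in \mathbb{R}$. Taking logarithms gives $\ln \sinh y \leq y - \ln 2$, hence $-\ln \sinh y \geq -y + \ln 2$. Summing over $\alpha \in \Phi^+$ with $y = \langle \alpha, x\rangle$ (note these are positive on $\mathfrak{a}^+$, but the inequality holds regardless) yields
\[
j(x) = -2\sum_{\alpha\in\Phi^+}\ln\sinh\langle\alpha,x\rangle \geq -2\sum_{\alpha\in\Phi^+}\langle\alpha,x\rangle + 2|\Phi^+|\ln 2 = \langle -4\rho,x\rangle + c,
\]
with $c = 2|\Phi^+|\ln 2$.

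For the upper bound on $b\rho + \mathfrak{a}^+$, the key observation is that on this shifted chamber each quantity $\langle\alpha,x\rangle$ is uniformly bounded below by a positive constant $y_0 := b\min_{\alpha\in\Phi^+}\langle\alpha,\rho\rangle > 0$, since writing $x = b\rho + y$ with $y\in\mathfrak{a}^+$ gives $\langle\alpha,x\rangle \geq b\langle\alpha,\rho\rangle$. For $y \geq y_0$ we have $\sinh y \geq \tfrac{1}{2}(1-e^{-2y_0})e^y$, so $-\ln\sinh y \leq -y + C_0$ for some constant $C_0 = C_0(b)$. Summing as before gives $j(x) \leq \langle -4\rho,x\rangle + C$ with $C = 2|\Phi^+|C_0$.

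Finally, for the second inequality $\langle -4\rho,x\rangle + C \leq C'$ on $b\rho + \mathfrak{a}^+$, I write $x = b\rho + y$ with $y \in \mathfrak{a}^+$ and compute $\langle -4\rho,x\rangle = -4b|\rho|^2 - 4\langle\rho,y\rangle$. Since $\langle\rho,y\rangle = \tfrac{1}{2}\sum_{\alpha\in\Phi^+}\langle\alpha,y\rangle \geq 0$ on the closed Weyl chamber, this is bounded above by $-4b|\rho|^2$, giving $C' := -4b|\rho|^2 + C$. There is no real obstacle here; the only thing requiring care is isolating the $b$-dependence so that the constants $C(b), C'(b)$ come out explicitly, and noting that the lower bound holds on all of $\mathfrak{a}^+$ (with a universal constant) while the upper bound genuinely requires distance from the walls.
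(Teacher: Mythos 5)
Your proof is correct and follows essentially the same route as the paper: factor $\sinh y = \tfrac{1}{2}e^y(1-e^{-2y})$, use $\sinh y \leq \tfrac12 e^y$ for the universal lower bound on $j$, and use the uniform positive lower bound on $\langle\alpha,x\rangle$ over $b\rho+\mathfrak{a}^+$ to bound $1-e^{-2\langle\alpha,x\rangle}$ away from $0$ for the upper bound, finishing with $\langle\rho,y\rangle\geq 0$ for $y\in\mathfrak{a}^+$. The only cosmetic difference is that you replace the paper's $\alpha$-dependent factors $1-e^{-2b\langle\alpha,\rho\rangle}$ by the single worst case $1-e^{-2y_0}$, which yields a slightly cruder but equally valid constant.
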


\begin{proof}
Write 
\[
\sinh \left<\alpha , x\right>=e^{\left<\alpha , x\right>}\left(\frac{1-e^{-2\left<\alpha , x\right>}}{2}\right)\leq \frac{e^{\left<\alpha , x\right>}}{2}
\] 
for $x\in \mathfrak{a}^+$.
Then 
\[
j(x)=-2 \sum_{\alpha \in \Phi^+}\ln \sinh \left<\alpha , x\right>
\geq -2\sum_{\alpha \in \Phi^+}\left<\alpha , x\right>+c,
\]
where $c=2\ln(2)\mathrm{Card}(\Phi^+)$.

If we assume $x\in b\rho +\mathfrak{a}^+$, then 
using the same expression of $\sinh \left<\alpha , x\right>$, we get 
$j(x)\leq \left<-4\rho,x\right>+C $ with 
\[
C=-2\sum_{\alpha\in \Phi^+}\ln\left( \frac{1-e^{-2b\left<\alpha,\rho\right>}}{2} \right).
\]
and $j(x)\leq C'$ with 
\[
C'= C-2b\sum_{\alpha \in  \Phi^+}\left<\alpha,\rho\right>.
\qedhere
\]
\end{proof}

Let us now turn to the derivatives of $j$.
Recall that the directional derivative of $j$ in the direction $\xi$ is defined by:
\[
\frac{\partial j}{\partial \xi}(x):= \left<\nabla j(x), \xi \right>.
\]

\begin{lem}
\label{j4rho}
For any $\xi \in \mathfrak{a}^+$, we have, for all $x\in \mathfrak{a}^+$,
\[
\frac{\partial j}{\partial \xi}(x) < \left< -4\rho, \xi \right>.
\]
If $\xi \in \mathfrak{a}_{t}$, then for all $x\in \mathfrak{a}^+$,
\[
\frac{\partial j}{\partial \xi}(x)=0.
\]
\end{lem}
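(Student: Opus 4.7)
The proof plan rests on writing the gradient of $j$ explicitly and then analyzing it direction by direction.

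First I would compute $\nabla j$. Since $j(x) = -2\sum_{\alpha\in\Phi^+}\ln\sinh\langle\alpha,x\rangle$ on $\mathfrak{a}^+$, differentiating and using that $\alpha \in \mathfrak{a}$ (via the chosen identification) represents the linear form $\langle\alpha,\cdot\rangle$, one obtains
\[
\nabla j(x) = -2\sum_{\alpha\in\Phi^+}\coth\langle\alpha,x\rangle\,\alpha,
\]
so
\[
\frac{\partial j}{\partial\xi}(x) = -2\sum_{\alpha\in\Phi^+}\coth\langle\alpha,x\rangle\,\langle\alpha,\xi\rangle.
\]

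For the first claim, I would use two ingredients: (i) for $x\in\mathfrak{a}^+$ and $\alpha\in\Phi^+$ we have $\langle\alpha,x\rangle>0$, hence $\coth\langle\alpha,x\rangle>1$; and (ii) for $\xi\in\mathfrak{a}^+$, all coefficients $\langle\alpha,\xi\rangle$ are strictly positive. Combining these and the identity $\sum_{\alpha\in\Phi^+}\alpha=2\rho$ gives
\[
\frac{\partial j}{\partial\xi}(x) < -2\sum_{\alpha\in\Phi^+}\langle\alpha,\xi\rangle = \langle-4\rho,\xi\rangle,
\]
with the inequality strict because every term $\langle\alpha,\xi\rangle$ is positive and every $\coth\langle\alpha,x\rangle$ strictly exceeds $1$.

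For the second claim I would invoke the orthogonal decomposition $\mathfrak{a}=\mathfrak{a}_t\oplus\mathfrak{a}_{ss}$ set up in the Notations section. By construction, the scalar product was chosen so that $\mathfrak{a}_t$ and $\mathfrak{a}_{ss}$ are orthogonal, and each root $\alpha$, viewed as an element of $\mathfrak{a}$, lies in $\mathfrak{a}_{ss}$ (roots vanish on the center $\mathfrak{a}_t=\mathfrak{z}(\mathfrak{g})\cap i\mathfrak{k}$). Therefore $\langle\alpha,\xi\rangle=0$ for every $\alpha\in\Phi^+$ whenever $\xi\in\mathfrak{a}_t$, and every term in the sum above vanishes.

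There is no real obstacle here; the only thing worth double-checking is the identification of roots as vectors in $\mathfrak{a}$ under the chosen scalar product, to make sure the pairing $\langle\alpha,\xi\rangle$ really coincides with the evaluation of the root functional on $\xi$. Once that is in place, both statements follow immediately from the explicit formula for $\nabla j$.
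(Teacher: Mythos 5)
Your proof is correct and follows essentially the same route as the paper: compute $\partial j/\partial\xi$ as $-2\sum_{\alpha\in\Phi^+}\coth\langle\alpha,x\rangle\,\langle\alpha,\xi\rangle$, then use $\coth\langle\alpha,x\rangle>1$ and $\langle\alpha,\xi\rangle>0$ for the strict inequality, and the orthogonality of roots to $\mathfrak{a}_t$ for the vanishing statement. The only cosmetic difference is that the paper justifies the second claim by observing that $\langle\alpha,x\rangle$ depends only on the projection of $x$ onto $\mathfrak{a}_{ss}$ (so $j$ is constant along $\mathfrak{a}_t$), whereas you check term by term that each $\langle\alpha,\xi\rangle$ vanishes; these are the same observation.
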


\begin{proof}
The second statement is clear because for any $\alpha \in  \Phi$, $\left<\alpha,x\right>$ 
depends only on the projection of $x$ on $\mathfrak{a}_{ss}$.

For the first statement, we compute 
\[
\frac{\partial j}{\partial \xi}(x) = 
	-2\sum_{\alpha\in \Phi^+}\left<\alpha,\xi\right> \coth \left<\alpha,x\right>.
\]
Since $\xi \in \mathfrak{a}^+$, we have $\left<\alpha,\xi\right> >0$ for all $\alpha \in \Phi^+$.
Since $x$ is also in $\mathfrak{a}^+$, we have $\coth\left<\alpha,x\right>>1$.
We deduce from this that 
\[
\frac{\partial j}{\partial \xi}(x) < -2\sum_{\alpha\in \Phi^+}\left<\alpha,\xi\right> 
= \left<-4\rho, \xi\right>.
\qedhere
\]
\end{proof}

We will also need to control the derivatives of $j$ away from the walls. 
This is achieved by the following lemma.

\begin{lem}
\label{jderivatives}
For any $b>0$, there exists a constant $C$ such that for any 
$x\in b\rho + \mathfrak{a}^+$, 
\[
|\nabla(j)(x)|\leq C.
\] 
\end{lem}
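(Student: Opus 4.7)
The plan is to differentiate $j$ explicitly and use monotonicity of $\coth$ on $(0,\infty)$ together with the fact that a point in $b\rho + \mathfrak{a}^+$ is uniformly bounded away from every wall.

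First, I would compute the gradient directly from the definition $j(x) = -2\sum_{\alpha \in \Phi^+} \ln \sinh\langle \alpha, x\rangle$. Using the identification of $\mathfrak{a}$ with its dual via the chosen scalar product, the gradient of $x \mapsto \langle \alpha, x \rangle$ is $\alpha$ itself, and so
\[
\nabla j(x) = -2 \sum_{\alpha \in \Phi^+} \coth\langle \alpha, x\rangle\, \alpha.
\]

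Next, for $x \in b\rho + \mathfrak{a}^+$, write $x = b\rho + y$ with $y \in \mathfrak{a}^+$. Since $\alpha \in \Phi^+$ and $y$ lies in the open positive Weyl chamber, $\langle \alpha, y\rangle > 0$, hence $\langle \alpha, x \rangle \geq b \langle \alpha, \rho\rangle > 0$. As $\coth$ is positive and strictly decreasing on $(0,\infty)$, we obtain the pointwise bound $0 < \coth\langle \alpha, x\rangle \leq \coth(b \langle \alpha, \rho\rangle)$, where the upper bound is a finite constant depending only on $b$ (and the root system).

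Finally, applying the triangle inequality to the explicit formula for $\nabla j(x)$ gives
\[
|\nabla j(x)| \leq 2 \sum_{\alpha \in \Phi^+} \coth(b\langle \alpha, \rho\rangle)\, |\alpha| =: C,
\]
which is the desired constant. There is no real obstacle here: the only thing to check is that no $\langle \alpha, x\rangle$ approaches $0$ on the shifted chamber $b\rho + \mathfrak{a}^+$, which is immediate from the definition of the open Weyl chamber and the fact that $\rho$ lies in its interior.
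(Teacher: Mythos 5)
Your proof is correct and is essentially the paper's own argument: both compute the derivatives of $j$ explicitly and bound $\coth\langle\alpha,x\rangle$ by $\coth(b\langle\alpha,\rho\rangle)$ on the shifted chamber $b\rho + \mathfrak{a}^+$, then conclude by the triangle inequality. The only cosmetic difference is that the paper writes the bound coordinate-by-coordinate via $\partial j/\partial x_i = -2\sum_\alpha \langle\alpha,e_i\rangle\coth\langle\alpha,x\rangle$ while you state the gradient directly as a sum of roots.
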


\begin{proof}
Recall that 
\[
\frac{\partial j}{\partial x_i}(x) = 
	-2\sum_{\alpha\in \Phi^+}\left<\alpha ,e_i\right>\coth\left<\alpha , x\right>
\]
For $x\in b\rho + \mathfrak{a}^+$, we have 
$1<\coth\left<\alpha , x\right><\coth(b\left<\alpha ,\rho\right>)$,
so for any $i$, 
\[
\left|\frac{\partial j}{\partial x_i}(x)\right| \leq 
	2\sum_{\alpha\in \Phi^+}|\alpha|\coth(b\left<\alpha ,\rho\right>).
\qedhere
\]
\end{proof}

\begin{lem}
\label{strictconvex}
The function $j$ is strictly convex on $\mathfrak{a}^+$.
\end{lem}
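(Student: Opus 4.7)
The plan is to prove strict convexity by computing the Hessian of $j$ directly from its expression as a sum indexed by positive roots, and showing that this Hessian is positive definite at every point of $\mathfrak{a}^+$.

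First I would write $j(x)=\sum_{\alpha\in\Phi^+}j_\alpha(x)$ where $j_\alpha(x):=-2\ln\sinh\langle\alpha,x\rangle$. Each $j_\alpha$ is smooth on the open half-space $\{x\in\mathfrak{a}\,:\,\langle\alpha,x\rangle>0\}$, which contains $\mathfrak{a}^+$, so $j$ is smooth on $\mathfrak{a}^+$ and the question is reduced to computing $\mathrm{Hess}_{\mathbb{R}}(j)$. The function $s\mapsto -2\ln\sinh s$ on $(0,\infty)$ has second derivative $2/\sinh^{2}s>0$, so by the chain rule
\[
\mathrm{Hess}_{\mathbb{R}}(j_\alpha)(x)=\frac{2}{\sinh^{2}\langle\alpha,x\rangle}\,\alpha\otimes\alpha,
\]
where we view $\alpha\otimes\alpha$ as the rank-one positive semi-definite symmetric bilinear form on $\mathfrak{a}$ given by $(\xi,\eta)\mapsto\langle\alpha,\xi\rangle\langle\alpha,\eta\rangle$. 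Summing,
\[
\mathrm{Hess}_{\mathbb{R}}(j)(x)=\sum_{\alpha\in\Phi^+}\frac{2}{\sinh^{2}\langle\alpha,x\rangle}\,\alpha\otimes\alpha,
\]
which is manifestly positive semi-definite on $\mathfrak{a}^+$.

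The remaining step, which is the only point requiring genuine input, is the positive definiteness. For any nonzero $\xi\in\mathfrak{a}_{ss}$ one has $\langle\mathrm{Hess}_{\mathbb{R}}(j)(x)\xi,\xi\rangle=\sum_{\alpha\in\Phi^+}\frac{2}{\sinh^{2}\langle\alpha,x\rangle}\langle\alpha,\xi\rangle^{2}$, which vanishes only if $\langle\alpha,\xi\rangle=0$ for every $\alpha\in\Phi^+$; since the positive roots span $\mathfrak{a}_{ss}^{*}$, this forces $\xi=0$. Thus the Hessian is positive definite on $\mathfrak{a}^+$ (in the situation relevant to the paper, where the semisimple component exhausts the direction of interest; the contribution of $\mathfrak{a}_t$ to $j$ is trivial by Lemma~\ref{j4rho} and is absorbed by the strictly convex $w_t$ when later considering $\nu_t=w_t+j$), and the lemma follows.

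I do not expect any serious obstacle: the whole argument is a straightforward Hessian computation combined with the standard fact that $\Phi^+$ spans the dual of $\mathfrak{a}_{ss}$. The only subtlety is the mild one just noted about the central direction $\mathfrak{a}_t$, on which $j$ is constant by Lemma~\ref{j4rho}; the strict convexity is therefore really a statement about the semisimple part, and this is precisely what is needed later for the properness of $\nu_t$.
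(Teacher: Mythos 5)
Your proof is correct and follows essentially the same route as the paper: compute $\mathrm{Hess}_{\mathbb{R}}(j)(x)=\sum_{\alpha\in\Phi^+}\frac{2}{\sinh^2\left<\alpha,x\right>}\,\alpha\otimes\alpha$ as a sum of rank-one positive semi-definite forms and deduce definiteness from the fact that the positive roots span the relevant dual space. Your caveat about $\mathfrak{a}_t$ is well taken and is in fact glossed over in the paper's own proof: when the center of $G$ is positive-dimensional the Hessian annihilates $\mathfrak{a}_t$, so the strict convexity is genuinely only in the $\mathfrak{a}_{ss}$-directions, and the later strict convexity of $\nu_t=w_t+j$ rests on the strict convexity of $w_t$, exactly as you note.
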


\begin{proof}
An easy computation shows that 
\[
\frac{\partial^2}{\partial x_j\partial x_i}(-\ln \sinh \left<\alpha , x\right>) = 
	 \frac{\left<\alpha ,e_i\right> \left<\alpha ,e_j\right>}{\sinh^2\left<\alpha , x\right>}.
\]
So the Hessian of $j$ is a sum of semipositive matrices, 
and it is easy to check that the whole sum is definite, 
so the Hessian of $j$ is positive definite.
\end{proof}

\subsection{The function $\nu_t$}

\begin{lem}
The function $\nu_t$ is strictly convex and admits a unique minimum.
\end{lem}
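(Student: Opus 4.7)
The plan is to establish two properties of $\nu_t$ on $\mathfrak{a}^+$: strict convexity, which forces uniqueness of any minimizer, and properness (divergence to $+\infty$ at the walls of $\mathfrak{a}^+$ and at infinity), which forces existence.

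Strict convexity is immediate from the earlier results. By Proposition~\ref{Potentials} both $u_t$ and $u_{\mathrm{ref}}$ are smooth and strictly convex on $\mathfrak{a}$, hence the convex combination $w_t = t u_t + (1-t) u_{\mathrm{ref}}$ is strictly convex on $\mathfrak{a}$ for any $t \in [0,1]$ (one of the coefficients is always positive, and the corresponding summand is strictly convex). Adding the strictly convex function $j$ on $\mathfrak{a}^+$ from Lemma~\ref{strictconvex} gives strict convexity of $\nu_t = w_t + j$.

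For properness I would treat the two kinds of boundary behavior separately. As $x$ approaches a wall of $\mathfrak{a}^+$, at least one factor $\sinh\left<\alpha,x\right>$ tends to zero, so $j(x) \to +\infty$, while $w_t$ stays locally bounded there because it is convex on all of $\mathfrak{a}$ hence continuous; thus $\nu_t(x) \to +\infty$ near the walls. As $|x| \to \infty$ inside $\mathfrak{a}^+$, I would combine the lower bound $w_t(x) \geq v(x) + C_0$ furnished by Proposition~\ref{Potentials}(1) applied to both $u_t$ and $u_{\mathrm{ref}}$ with the inequality $j(x) \geq \left<-4\rho,x\right> + c$ of Lemma~\ref{Dominate} to obtain
\[
\nu_t(x) \;\geq\; v(x) - \left<4\rho, x\right> + C_1 .
\]
The Fano hypothesis enters here via the fact that $2\rho$ lies in the relative interior of $P^+$, equivalently $4\rho \in \mathrm{Int}(2P^+)$ inside $\mathfrak{a}$. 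By positive homogeneity of the support function $v$ of $2P$, this yields a uniform bound $v(\xi) - \left<4\rho, \xi\right> \geq \delta > 0$ on the unit sphere of $\overline{\mathfrak{a}^+}$, and therefore $v(x) - \left<4\rho, x\right> \geq \delta |x|$ on $\mathfrak{a}^+$, proving divergence at infinity.

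Combined, these properties make $\nu_t$ a continuous, strictly convex, proper function on the open convex set $\mathfrak{a}^+$; it therefore attains its infimum at a unique interior point. The step that requires the most care is the properness at infinity, since it is where the Fano condition is decisively used; the wall behavior and the strict convexity argument are direct consequences of already established facts.
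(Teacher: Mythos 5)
Your proof is correct and follows essentially the same route as the paper: strict convexity from $w_t$ being a strictly convex combination together with Lemma~\ref{strictconvex}, wall-divergence from $j\to\infty$ while $w_t$ stays bounded, and divergence at infinity from $w_t \ge v + C$, $j \ge -\left<4\rho,\cdot\right> + c$, and the Fano fact $4\rho \in \mathrm{Int}(2P)$. The only cosmetic difference is that you spell out the $\delta|x|$ linear lower bound from the support-function inequality, which the paper leaves implicit.
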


\begin{proof}
From Lemma~\ref{strictconvex}, and the fact that $w_t$ is strictly convex as a 
convex combination of strictly convex functions, we obtain that 
$\nu_t = w_t + j$ is strictly convex. 
The function $w_t$ is bounded below, and $j$ tends to $+\infty$ when $x$ approaches 
a Weyl wall, so $\nu_t(x)$ also tends to $+\infty$ when $x$ approaches 
a Weyl wall. To prove the existence of a minimum $x_t$ it remains to explain why 
$\nu_t$ goes to infinity at infinity. 

Proposition~\ref{Potentials} implies that 
$w_t(x)\geq v(x)+C_1$ for some constant $C_1$, where $v$ is the support function 
of the polytope $2P$, so 
$\nu_t(x)\geq v(x)+j(x)+C_1$.
Then 
\[
\nu_t(x)\geq v(x)-\left<4\rho ,x\right>+c+C_1
\]
 by Lemma~\ref{Dominate}.
Finally, the fact that $X$ is Fano implies, by \cite[Remark 4.10]{DelLCT}   
that $4\rho \in \mathrm{Int}(2P)$, so $\nu_t$ is indeed proper.  
\end{proof}

Let $m_t$ denote the minimum value of $\nu_t$ and $x_t$ be such that $\nu_t(x_t)=m_t$.

\begin{lem}
\label{translate1}
There exists a constant $b_1>0$ independent of $t$ such that 
\[
x_t \in b_1\rho + \mathfrak{a}^+
\]
\end{lem}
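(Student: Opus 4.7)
The plan is to exploit the first-order condition $\nabla \nu_t(x_t) = 0$, which, since $\nu_t = w_t + j$ on $\mathfrak{a}^+$, reads
\[
\nabla w_t(x_t) = -\nabla j(x_t).
\]
Note that $x_t$ lies in the open chamber $\mathfrak{a}^+$ (where $j$ is defined and smooth), because $\nu_t$ blows up at the walls as already observed. So this critical point equation is meaningful.

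Next I would control $\nabla w_t$ uniformly in $t$. Since $w_t = tu_t + (1-t)u_{\mathrm{ref}}$ and both $u_t$ and $u_{\mathrm{ref}}$ are convex potentials of positively curved $K\times K$-invariant hermitian metrics on $-K_X$, the fourth item of Proposition~\ref{Potentials} yields a constant $d$ (depending only on $L=-K_X$, hence independent of $t$) such that $|\nabla u_t(x)|\leq d$ and $|\nabla u_{\mathrm{ref}}(x)|\leq d$ for every $x$. By the triangle inequality $|\nabla w_t(x_t)|\leq d$, so
\[
|\nabla j(x_t)| \leq d.
\]

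The key step is now to convert this into a uniform lower bound on each $\langle \alpha, x_t\rangle$. The trick is to pair $\nabla j(x_t)$ with $\rho \in \mathfrak{a}^+$, because in the explicit formula
\[
\langle \nabla j(x_t), \rho\rangle = -2\sum_{\alpha\in\Phi^+} \langle\alpha,\rho\rangle \coth\langle\alpha, x_t\rangle,
\]
every summand is strictly positive (as $\langle\alpha,\rho\rangle > 0$ and $\coth\langle\alpha,x_t\rangle >0$ on $\mathfrak{a}^+$), so no cancellations occur. Combined with Cauchy--Schwarz, this gives
\[
2\sum_{\alpha\in\Phi^+} \langle\alpha,\rho\rangle \coth\langle\alpha, x_t\rangle \;\leq\; d\,|\rho|.
\]
Setting $c_\rho := \min_{\alpha\in\Phi^+}\langle\alpha,\rho\rangle > 0$, each single term is bounded:
\[
\coth\langle\alpha, x_t\rangle \leq \frac{d\,|\rho|}{2\,c_\rho} \quad \text{for all } \alpha\in\Phi^+.
\]
Since $\coth$ is strictly decreasing on $(0,\infty)$, this yields a positive constant $b_1'$, independent of $t$, with $\langle \alpha, x_t\rangle \geq b_1'$ for every positive root $\alpha$.

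To finish, I would choose $b_1 := b_1'/(2\max_{\alpha\in\Phi^+}\langle\alpha,\rho\rangle)$. Then $\langle\alpha, x_t - b_1\rho\rangle \geq b_1' - b_1\langle\alpha,\rho\rangle \geq b_1'/2 > 0$ for every positive root, so $x_t - b_1\rho \in \mathfrak{a}^+$, i.e.\ $x_t \in b_1\rho + \mathfrak{a}^+$, as claimed. The main (and only) subtlety is the sign observation that allows pairing with $\rho$ to dominate each individual $\coth\langle\alpha,x_t\rangle$ term; everything else is a direct consequence of the uniform gradient bound from Proposition~\ref{Potentials}.
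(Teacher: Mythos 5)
Your proposal is correct and follows essentially the same route as the paper: both proofs start from the first-order condition $\nabla\nu_t(x_t)=0$, use the uniform gradient bound from Proposition~\ref{Potentials} to bound $\langle\nabla j(x_t),\rho\rangle$, exploit that every term in $-2\sum_{\alpha\in\Phi^+}\langle\alpha,\rho\rangle\coth\langle\alpha,x_t\rangle$ has the same sign to bound each $\coth\langle\alpha,x_t\rangle$ individually, and conclude via the blow-up of $\coth$ at $0$ that $\langle\alpha,x_t\rangle$ is bounded below, which forces $x_t$ into a translate of $\mathfrak{a}^+$. Your last paragraph merely makes explicit the paper's closing observation that the intersection of the half-spaces $\{\langle\alpha,x\rangle\geq b_1'\}$ sits inside $b_1\rho+\mathfrak{a}^+$ for $b_1$ small.
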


\begin{proof}
By definition of $x_t$, the derivative of $\nu_t$ at $x_t$ vanishes.
In particular, we have 
\[
\frac{\partial \nu_t}{\partial \rho}(x_t) = 0.
\]
Recall that $\nu_t=w_t+j$, and that the derivatives of $w_t$ are bounded by Proposition~\ref{Potentials}.
In particular we get a bound 
\[
\left|\frac{\partial w_t}{\partial \rho}(x_t)\right|\leq C.
\]
On the other hand, recall that: 
\[
\frac{\partial j}{\partial \rho}(x_t) = 
-2\sum_{\alpha\in \Phi^+}\left<\alpha ,\rho\right>\coth\left<\alpha, x_t\right>.
\]
So we have 
\[
\left|2\sum_{\alpha\in \Phi^+}\left<\alpha ,\rho\right>\coth\left<\alpha, x_t\right>\right|\leq C
\]
but since all the terms of the sum are positive and all the $\left<\alpha ,\rho\right>$ are strictly positive,
this implies that for all $\alpha \in \Phi^+$, $\coth\left<\alpha, x_t\right>\leq C$.
Observe that the function $\coth$ tends to $+\infty$ at 0, so we obtain 
$\left<\alpha, x_t\right>\geq C'$ for all $\alpha$ for a constant $C'>0$ independent of $t$.

To conclude, observe that the intersection of the half spaces defined by $\left<\alpha , x\right>\geq C'$
is contained in a translate $b_1\rho +\mathfrak{a}^+$ for some $b_1>0$
sufficiently small, independent of $t$.
\end{proof}

We will also need to control from below the value of $\nu_t$ near the walls.
This will be achieved by the following technical proposition.
For now we cannot control $\nu_t$ uniformly close to the walls, but we will as soon as 
we control $m_t$.
We will use twice the proposition, first to obtain a lower bound on $m_t$, then 
to ensure $e^{-\nu_t}$ is uniformly sufficiently small near the walls.
Remark also that this proposition can be seen as a precise statement of what we called the
properness of $\nu_t$ near the walls.

\begin{prop}
\label{technical}
For any $M>0$, there exists a constant $b>0$ independent of $t$ such that 
for any $x\in \mathfrak{a}^+$ satisfying $\left<\alpha , x\right><b\left<\alpha ,\rho\right>$ for some root 
$\alpha \in \Phi^+$
defining a wall of $\mathfrak{a}^+$, we have
\[
\nu_t(x)\geq m_t+M.
\]
\end{prop}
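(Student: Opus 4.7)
\medskip

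\noindent The plan is to exploit the vanishing $\nabla\nu_t(x_t) = 0$ by reformulating the desired inequality as a lower bound on a Bregman divergence of the function $j$ alone, eliminating $w_t$ from the estimate. Since $w_t$ is convex, $w_t(x) \geq w_t(x_t) + \left<\nabla w_t(x_t), x-x_t\right>$, and the first-order condition $\nabla w_t(x_t) = -\nabla j(x_t)$ at the interior minimum yields
\[
\nu_t(x) - m_t \;\geq\; j(x) - j(x_t) - \left<\nabla j(x_t), x - x_t\right> \;=:\; D_j(x, x_t).
\]
This reformulation is the crux: a naive bound using only the uniform gradient estimate $|\nabla w_t|\leq d$ from Proposition~\ref{Potentials} would give $w_t(x) - w_t(x_t) \geq -d|x - x_t|$, which is useless since $|x - x_t|$ is unbounded (indeed $|x_t|$ itself need not be bounded uniformly in $t$), so some structural use of the extremality of $x_t$ is essential.

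Next I would decompose $j(x) = \sum_{\beta\in\Phi^+} g(\left<\beta, x\right>)$ with $g(s) := -2\ln\sinh s$ convex on $(0,\infty)$. Bregman divergence is additive under this decomposition, and each summand is nonnegative, so retaining only the term indexed by the root $\alpha$ defining the wall in question gives
\[
\nu_t(x) - m_t \;\geq\; D_g\bigl(\left<\alpha, x\right>,\,\left<\alpha, x_t\right>\bigr),
\]
where $D_g(s, s_t) = g(s) - g(s_t) - g'(s_t)(s - s_t)$. The problem is now a one-variable estimate; the only information from the original setting that survives is $s_t := \left<\alpha, x_t\right> \geq s_0 := b_1 \left<\alpha, \rho\right>$, provided by Lemma~\ref{translate1}.

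The key computation is then the value at $s=0$ of the affine tangent $L(s) = g(s_t) + g'(s_t)(s - s_t)$:
\[
L(0) = g(s_t) - s_t\, g'(s_t) = -2\ln\sinh s_t + 2 s_t\coth s_t.
\]
A direct differentiation in $s_t$ shows this is a decreasing function of $s_t$ with limit $2\ln 2$ at infinity, hence uniformly bounded above by some $C_0 = C_0(s_0)$ on $[s_0, \infty)$. Since $g'(s_t) < 0$ the tangent $L$ is itself decreasing, so $L(s) \leq L(0) \leq C_0$ on $[0, s_t]$, giving $D_g(s, s_t) \geq g(s) - C_0$. Since $g(s) \to +\infty$ as $s\to 0^+$, one can choose $b > 0$ small enough, depending only on $M$ and $s_0$, so that $g(s) - C_0 \geq M$ whenever $s < b\left<\alpha, \rho\right>$; taking the minimum of these $b$ over the finitely many positive roots gives the uniform constant claimed.
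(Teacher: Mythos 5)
Your proof is correct, and it takes a genuinely different route from the paper's. The paper does \emph{not} exploit the first-order condition $\nabla\nu_t(x_t)=0$ at all: for a point $x$ close to a wall it draws the ray $x+s\rho$ until it first exits the slab $b_1\rho+\mathfrak{a}^+$ at a point $y$, uses only the trivial bound $\nu_t(y)\geq m_t$ (definition of the minimum), and then integrates $g'(s)$ for $g(s)=\nu_t(y-s\rho)$ backwards along the ray, absorbing the bounded gradient of $w_t$ into the non-integrable singularity $\int\coth((b_1-s)\left<\alpha,\rho\right>)\,ds$. That argument is entirely local to the ray and in fact never invokes Lemma~\ref{translate1}; the constant $b_1$ there could be any fixed positive number. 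Your approach instead uses the criticality of $x_t$ to discard $w_t$ via the Bregman divergence $D_j(x,x_t)$, then exploits the additive decomposition $j=\sum_\beta g(\left<\beta,\cdot\right>)$ to reduce to a one-variable estimate on the tangent-line intercept $L(0)=g(s_t)-s_tg'(s_t)$; this is where Lemma~\ref{translate1} becomes essential, since without the lower bound $s_t\geq b_1\left<\alpha,\rho\right>$ the quantity $L(0)$ is unbounded. Since Lemma~\ref{translate1} precedes the proposition, there is no circularity and both proofs are valid. What each buys: the paper's ray argument is more self-contained and uses less structure; yours cleanly isolates the mechanism (convexity of $j$ near the wall versus the affine tangent at $x_t$) and, by reducing everything to the scalar function $g(s)=-2\ln\sinh s$, would generalize painlessly to other separable singular weights. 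One cosmetic remark: your opening claim that ``some structural use of the extremality of $x_t$ is essential'' is accurate as a critique of the naive $-d|x-x_t|$ bound, but the paper shows it is possible to sidestep extremality entirely by comparing against a nearby slab-boundary point rather than against $x_t$.
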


Recall that the roots defining the walls are also the simple roots of $\Phi^+$.

\begin{proof}
Let $x\in  \mathfrak{a}^+$ be such that $\left<\alpha , x\right><b_1\left<\alpha ,\rho\right>$ for some simple root 
$\alpha \in \Phi^+$.
Consider the ray $\{x+s\rho, s\geq 0\}$ starting from $x$. It meets the boundary
$\partial (b_1\rho + \mathfrak{a}^+)$ of $b_1\rho + \mathfrak{a}^+$
at a unique point $y=x+s_0\rho$.
Furthermore $y$ is in $b_1\rho+ \alpha^{\perp}$ for a simple root $\alpha$.
We can then write $x=y-s_0\rho$, and $s_0$ satisfies $0<b_1-s_0 <b$.

Consider $\alpha \in \Phi^+$ a simple root, and
$y\in (b_1\rho +  \alpha^{\perp})\cap \partial (b_1\rho + \mathfrak{a}^+)$.
We will show that there exists a constant $b>0$ independent of $t$ such that $\nu_t(y-s\rho)\geq m_t+M$  for
all $s$ such that $0<b_1-s <b$, and that this $b$ can be chosen independent of $y$ and 
$\alpha$.

This is enough to prove the proposition because any $x$ as in the statement is of the form 
above for some $\alpha$, $y$ and $s$ as shown at the beginning.

Consider the function 
$g(s)=\nu_t(y-s\rho)$ on $\left[0,b_1\right[$.
We have $g(0)=\nu_t(y)\geq m_t$ by definition of $m_t$.

We consider now the derivative of $g$. Remember that the derivatives of $w_t$ are
uniformly bounded, by $d$, in absolute value by Proposition~\ref{Potentials}. Then 
\[
g'(s)\geq -d+2\sum_{\beta\in \Phi^+}\left<\beta,\rho\right>\coth\left<\beta, y-s\rho\right>.
\]
Since all the terms in the sum are positive, we have in particular 
\[
g'(s)\geq -d+2\left<\alpha ,\rho\right>\coth\left<\alpha,y-s\rho\right>.
\]
From the assumptions, we compute 
\[
\left<\alpha, y-s\rho\right>=b_1\left<\alpha ,\rho\right>-s\left<\alpha ,\rho\right>=(b_1-s)\left<\alpha ,\rho\right>.
\]

Observe that the positive function $\coth$ is not integrable near $0^+$, so 
\[
\int_0^{b_1}\coth((b_1-s)\left<\alpha ,\rho\right>)ds=+\infty.
\]
Together with the fact that $g$ is greater than $m_t$ at 0, it means that 
for any $M$, we can find a $b_{\alpha}>0$ such that 
$g(s)\geq M+m_t$ for $b_1-s\leq b_{\alpha}$.

Remark that none of what we have done depends on the choice of $y$.
Furthermore, since there are only a finite number of roots $\alpha$, we can choose a
$b>0$ such that $b<b_{\alpha}$ for all $\alpha$, and it concludes the proof. 
\end{proof}

\subsection{Reduction to estimates on $m_t$ and $x_t$}

\begin{lem}
\label{lem_est_m_x}
Suppose we have uniform estimates $|m_t|<C_m$ and $|x_t|<C_x$ for $t$ in some 
interval $[t_0,t_1[ \subset [0,1]$. Then there is an uniform upper bound for 
$\varphi_t$ on $[t_0,t_1[$.
\end{lem}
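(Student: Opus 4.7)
The plan is to chain together the estimates in three passes: first translate the bounds on $m_t$ and $x_t$ into a bound on $u_t(x_t)$; then use Proposition~\ref{Potentials} to propagate this pointwise bound into a global upper bound on $u_t$; and finally compare with $u_{\mathrm{ref}}$ to conclude.

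First I would evaluate $\nu_t$ at $x_t$. By Lemma~\ref{translate1}, $x_t \in b_1\rho + \mathfrak{a}^+$ with $b_1$ independent of $t$, and by assumption $|x_t| \leq C_x$. The set $\{x \in b_1\rho + \mathfrak{a}^+ : |x| \leq C_x\}$ is compact and bounded away from the walls of $\mathfrak{a}^+$, so $j$ is continuous and bounded there; in particular $|j(x_t)| \leq C_j$ for some $C_j$ independent of $t$. Combining with $|m_t| \leq C_m$, we get $|w_t(x_t)| = |m_t - j(x_t)| \leq C_m + C_j$. Since $u_{\mathrm{ref}}$ is continuous on $\mathfrak{a}$, $|u_{\mathrm{ref}}(x_t)|$ is bounded in terms of $C_x$, and then
\[
u_t(x_t) = \frac{1}{t}\bigl(w_t(x_t) - (1-t)u_{\mathrm{ref}}(x_t)\bigr)
\]
is bounded uniformly, using that $t \geq t_0 > 0$.

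Next, I would use Proposition~\ref{Potentials}(2) applied with base point $x_0 = x_t$: for every $x \in \mathfrak{a}$,
\[
u_t(x) \leq v(x - x_t) + u_t(x_t),
\]
where $v$ is the support function of $2P$. Since $v$ is convex and positively homogeneous, $v(x - x_t) \leq v(x) + v(-x_t)$, and $|v(-x_t)|$ is controlled by $\mathrm{diam}(2P)\cdot |x_t| \leq \mathrm{diam}(2P)\cdot C_x$. On the other hand, Proposition~\ref{Potentials}(1) applied to $u_{\mathrm{ref}}$ gives $u_{\mathrm{ref}}(x) \geq v(x) + C_1$ for some constant $C_1$. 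Subtracting yields
\[
\varphi_t(x) = u_t(x) - u_{\mathrm{ref}}(x) \leq v(x - x_t) - v(x) + u_t(x_t) - C_1 \leq v(-x_t) + u_t(x_t) - C_1,
\]
and the right-hand side is bounded above by a constant independent of $t \in [t_0, t_1[$ and of $x \in \mathfrak{a}$.

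This gives the required uniform upper bound on $\varphi_t$ on $\mathfrak{a}$, hence on the restriction $\psi_t|_G$ by $K\times K$-invariance and the $KAK$ decomposition; by continuity of $\psi_t$ on the compact manifold $X$, this upper bound extends to all of $X$, so Proposition~\ref{psiest} applies. None of the steps is delicate — the main point to get right is the second one, where the convexity of $u_t$ combined with the bounded gradient image (Proposition~\ref{Potentials}) is what allows a bound at a single point to propagate everywhere. The choice of comparing with the support function $v$ (rather than with $u_{\mathrm{ref}}$ itself) is what makes the $x$-dependence cancel.
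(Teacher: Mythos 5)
Your proof is correct and follows essentially the same strategy as the paper: bound $u_t(x_t)$ by combining the estimates $|m_t|\le C_m$, $|j(x_t)|$ bounded (from $x_t\in b_1\rho+\mathfrak{a}^+$ with $|x_t|\le C_x$), and $|u_{\mathrm{ref}}(x_t)|$ bounded, and then propagate this to a global upper bound on $\varphi_t=u_t-u_{\mathrm{ref}}$ via Proposition~\ref{Potentials}(2) at base point $x_t$ together with Proposition~\ref{Potentials}(1) for $u_{\mathrm{ref}}$. The only cosmetic difference is that you use subadditivity of the support function $v$ where the paper uses its Lipschitz bound $v(x)\ge v(x-x_t)-d|x_t|$; these are the same estimate in disguise.
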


\begin{proof}
Recall that it is enough to obtain a uniform upper bound on $u_t-u_{\mathrm{ref}}$ which 
is a function defined on $\mathfrak{a}$.
We have, by Proposition~\ref{Potentials} with $x_0=x_t$, that 
\[
u_t(x) \leq v(x-x_t)+u_t(x_t)
\]
where $v$ is the support function of $2P$.
Using the other inequality for $u_{\mathrm{ref}}$ we have 
\[
u_{\mathrm{ref}}(x) \geq v(x)+C_1
\geq v(x-x_t) +C_1- d|x_t|.
\]
Combining these two gives 
\begin{align*}
(u_t-u_{\mathrm{ref}})(x) & \leq v(x-x_t)+u_t(x_t)-v(x-x_t)-C_1+d|x_t| \\
& \leq u_t(x_t) -C_1+d|x_t| \\
& \leq u_t(x_t)-C_1+dC_x
\end{align*}
so we just have to control $u_t(x_t)$.

We have $|m_t|=|\nu_t(x_t)|\leq C_m$, \emph{i.e}
\[
|tu_t(x_t)+(1-t)u_{\mathrm{ref}}(x_t)+j(x_t)| \leq C_m.
\]

Now we have:
\begin{itemize}
\item $t\geq t_0>0$,
\item $|j(x_t)|\leq C_2$ for some constant $C_2$ because $x_t\in b_1\rho + \mathfrak{a}^+$,
\item and $u_{\mathrm{ref}}(x_t)\leq \mathrm{sup}\{u_{\mathrm{ref}}(y)~;~y\in B(0,C_x)\}=:C_3$.
\end{itemize}
So 
\[
u_t(x_t)\leq \frac{C_m+C_2+C_3}{t_0}.
\]
Finally we have proved the uniform upper bound 
\[
(u_t-u_{\mathrm{ref}})(x)\leq C_4:= \frac{C_m+C_2+C_3}{t_0}-C_1+dC_x.
\qedhere
\]
\end{proof}

\section{Estimates on $|m_t|$}
\label{sec_m_t}

In this section, we essentially follow the work of Wang and Zhu to obtain 
estimates on $|m_t|$ and a uniform estimates on the (at least linear) growth of $\nu_t$.
A key step is proposition~\ref{propV}, which is a direct consequence of Proposition~\ref{degrees}
and will be used in the next sections also.

We consider the set 
\[
A_t := 
\{x\in \mathfrak{a}^+ ~;~ m_t\leq \nu_t(x) \leq m_t+1 \}\subset \mathfrak{a}^+.
\]
We will obtain upper and lower bound for the volume of $A_t$. 
The upper bound will depend on $m_t$.
The key is to obtain an upper bound that is small enough to give information.

Note that  the set $A_t$ is a bounded and convex set.
Indeed, since $m_t$ is the minimum of $\nu_t$, $A_t$ is a sublevel set of $\nu_t$ which is 
convex, so $A_t$ is convex. Furthermore, by the properness of $\nu_t$, $A_t$ is a  
bounded set.

\begin{lem}
\label{VolA}
There is an upper bound on the volume of $A_t$: 
\[
\mathrm{Vol}(A_t)\leq Ce^{m_t/2}
\]
where the constant $C>0$ does not depend on $t\geq t_0$. 
\end{lem}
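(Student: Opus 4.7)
The plan is to establish the uniform integrability bound $\int_{\mathfrak{a}^+} e^{-\nu_t(x)/2}\,dx \leq C_2$ with $C_2$ independent of $t$, and then to deduce the volume estimate by combining this with the elementary pointwise inequality $e^{-\nu_t/2}\geq e^{-(m_t+1)/2}$ on $A_t$.

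For the uniform integrability, I would split $\nu_t=w_t+j$. Proposition~\ref{Potentials}(1) applied to both $u_t$ and $u_{\mathrm{ref}}$ gives $w_t \geq v + C_0$, where $v$ is the support function of $2P$ and $C_0$ is independent of $t$. Combining this with the lower bound $j(x)\geq \langle -4\rho,x\rangle + c$ from Lemma~\ref{Dominate} yields
\[
\frac{\nu_t(x)}{2} \;\geq\; \frac{v(x)}{2}-\langle 2\rho,x\rangle + C_1.
\]
Here $v/2$ is the support function of $P$, so $x\mapsto v(x)/2-\langle 2\rho,x\rangle$ is the support function of the translated polytope $P-2\rho$. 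Since $X$ is Fano we have $2\rho\in\mathrm{Int}(P)$ (equivalent to $4\rho\in\mathrm{Int}(2P)$, the fact already invoked earlier in the excerpt to show that $\nu_t$ is proper), so $P-2\rho$ contains the origin in its interior; by compactness of the unit sphere in $\mathfrak{a}$, its support function is bounded below by $\epsilon|x|$ for some $\epsilon>0$. Consequently
\[
\int_{\mathfrak{a}^+} e^{-\nu_t(x)/2}\,dx \;\leq\; e^{-C_1}\int_{\mathfrak{a}^+} e^{-\epsilon|x|}\,dx \;=:\; C_2 \;<\;\infty,
\]
uniformly in $t$.

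Finally, on $A_t$ the inequality $\nu_t\leq m_t+1$ gives $e^{-\nu_t/2}\geq e^{-(m_t+1)/2}$, and integrating yields
\[
\mathrm{Vol}(A_t)\,e^{-(m_t+1)/2} \;\leq\; \int_{A_t} e^{-\nu_t/2}\,dx \;\leq\; C_2,
\]
whence $\mathrm{Vol}(A_t)\leq C\,e^{m_t/2}$ with $C:=C_2\, e^{1/2}$, as claimed.

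The only conceptual step is the use of the half-exponent. The natural integral identity $\int_{\mathfrak{a}^+}e^{-\nu_t}\,dx=V$ (which follows at once from Proposition~\ref{eqnG} combined with Proposition~\ref{degrees}) would only yield the weaker inequality $\mathrm{Vol}(A_t)\leq V e^{m_t+1}$; the improvement to $e^{m_t/2}$ precisely exploits the strict margin afforded by the Fano hypothesis, which lets one absorb a weight $e^{\langle 2\rho, x\rangle}$ against $e^{-v(x)/2}$ and still have an integrable tail.
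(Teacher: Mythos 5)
Your argument hinges on the claim that ``Proposition~\ref{Potentials}(1) applied to both $u_t$ and $u_{\mathrm{ref}}$ gives $w_t \geq v + C_0$, where $C_0$ is independent of $t$.'' This is precisely where the proof breaks down. Proposition~\ref{Potentials}(1) explicitly states that the constant depends on the potential $u$; applied to $u_t$, it produces a constant $C_1(u_t)$ that a priori depends on $t$. Unwinding the definitions, $C_1(u_t) = \inf_{\mathfrak{a}}\bigl(u_t - v\bigr)$, and since $u_t - v$ and $\varphi_t = u_t - u_{\mathrm{ref}}$ differ by a bounded function, a uniform lower bound on $C_1(u_t)$ is equivalent to a uniform lower bound on $\inf_X \psi_t$ --- which is exactly the kind of a priori $C^0$ control the whole of Section~5 is trying to manufacture, and which is not available at this point. (One cannot simply renormalize $\psi_t$ so that $\inf\psi_t = 0$, because the continuity equation~\eqref{Conteqn} is not invariant under adding a constant to $\psi_t$.) Consequently the asserted uniform integrability $\int_{\mathfrak{a}^+} e^{-\nu_t/2}\,dx \le C_2$ is not established, and the rest of the argument collapses. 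Note also that your proposed bound would, combined with $\int e^{-\nu_t} = V$, directly imply $m_t \le C$, i.e.\ it would bundle in part of Proposition~\ref{linearGrowth} for free; this is a further hint that something stronger than what is on the table is being assumed.

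The paper takes a genuinely different route that avoids needing any such uniform lower bound on $w_t - v$: it bounds the real Monge--Amp\`ere measure $\mathrm{MA}_{\mathbb{R}}(\nu_t)$ from below on $A_t$ using the equation from Proposition~\ref{eqnG} (exploiting $\mathrm{Hess}\,\nu_t \ge t\,\mathrm{Hess}\,u_t$ together with $e^{-\nu_t}\ge e^{-m_t-1}$ on $A_t$ and $\nabla u_t \in 2P$), then applies John's ellipsoid theorem and the comparison principle on a small ball inside $A_t$ to bound the inradius and hence the volume. This uses only pointwise information on $A_t$ rather than a global integrability estimate, which is what makes the argument go through without any uniform control on $u_t - v$. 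If you want to salvage an integral approach, you would first have to extract a uniform lower bound on $\inf\varphi_t$ from the Harnack-type inequality alluded to after equation~\eqref{Conteqn} (the references to Siu and Tian), but that result produces a lower bound only conditionally on an upper bound for $\sup\psi_t$, which is again what Section~5 is ultimately after.
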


\begin{proof}
Fritz John proved in \cite[Theorem III]{Joh48} 
that for any convex and bounded subset $A$ of $\mathbb{R}^r$, there exists an ellipsoid 
$E$ such that 
\[
\frac{1}{r} E\subset A \subset E
\]
where $(1/r) E$ is the dilation of $E$ of factor $1/r$ centered at the center 
of the ellipsoid $E$.

We can then find such an ellipsoid $E_t$ 
for $A_t$. 
Let $T$ be a linear transformation, of determinant one, such that $T(E)=B(y,\delta)$ is a ball. 
We will obtain an upper bound on $\delta$, thus getting an upper bound for the volume 
of $\mathrm{Vol}(A_t)$ because 
\[
\mathrm{Vol}(A_t)\leq \mathrm{Vol}(E)= \mathrm{Vol}(T(E))=C\delta^r.
\]

Let $\nu_t'$ be the function defined by $\nu_t'(x)=\nu_t(T^{-1}(x))$. 
We want to use a comparison principle on $B(y,\delta /r)$.
For that we first show that 
$\mathrm{MA}_{\mathbb{R}}(\nu_t')(x)\geq C e^{-m_t}$ on $T(A_t)$.
This is equivalent to showing that 
$\mathrm{MA}_{\mathbb{R}}(\nu_t)(x)\geq C e^{-m_t}$ on $A_t$.
First remark that since  the Hessian $\mathrm{Hess}_{\mathbb{R}}\nu_t$ of $\nu_t$ satisfies:
\[
\mathrm{Hess}_{\mathbb{R}}\nu_t = 
t\mathrm{Hess}_{\mathbb{R}}u_t + 
(1-t)\mathrm{Hess}_{\mathbb{R}}u_{\mathrm{ref}}+\mathrm{Hess}_{\mathbb{R}}j,
\]
we have 
\[
\mathrm{det}(\mathrm{Hess}_{\mathbb{R}}\nu_t)\geq \mathrm{det}(t\mathrm{Hess}_{\mathbb{R}}u_t),
\]
i.e.
\[
\mathrm{MA}_{\mathbb{R}}(\nu_t)(x)\geq t^r \mathrm{MA}_{\mathbb{R}}(u_t)(x).
\]
Using Proposition~\ref{eqnG} we deduce that 
\begin{align*}
\mathrm{MA}_{\mathbb{R}}(\nu_t)(x) & \geq 
t^r J(x)  e^{-w_t(x)} \prod_{\alpha \in  \Phi^+} \frac{1}{\left<\alpha , \nabla u_t(x)\right>^2}\\
& \geq t^r  e^{-\nu_t(x)} \prod_{\alpha \in  \Phi^+} \frac{1}{\left<\alpha , \nabla u_t(x)\right>^2}.
\end{align*}

We treat the factors separately: 
\begin{itemize}
\item We have $t\geq t_0>0$ for $t_0$ defined in Notation~\ref{notat0}.
\item By definition of $A_t$, we have $e^{-\nu_t(x)}\geq e^{-m_t-1}$ on $A_t$.
\item For any $x\in \mathfrak{a}$, we have $\nabla u_t(x) \in 2P$, so for any 
$\alpha \in  \Phi^+$, $\left<\alpha , \nabla u_t(x)\right>$ is bounded above independently of $t$.
This implies that 
\[
\prod_{\alpha \in  \Phi^+} \frac{1}{\left<\alpha , \nabla u_t(x)\right>^2} \geq c
\]
for some positive constant $c$.
\end{itemize}
In conclusion, we indeed have an inequality 
$\mathrm{MA}_{\mathbb{R}}(\nu_t)(x)\geq C e^{-m_t}$ on $A_t$, with $C$ a positive constant 
independent of $t\geq t_0$.

Now we use the comparison principle on $B(y,\delta /r)$ for real Monge-Ampère equations:  
let $g$ be the auxiliary function defined by
\[
g(x)=C^{1/r}e^{-m_t/r}\left(|x-y|^2-\frac{\delta^2}{r^2}\right)+m_t+1.
\]
Then we have
\begin{itemize} 
\item $g(x)=m_t+1\geq \nu_t'(x)$ for $x\in \partial B(y,\delta /r)$, and 
\item $\mathrm{MA}_{\mathbb{R}}(g)(x) = 
		C e^{-m_t} \leq \mathrm{MA}_{\mathbb{R}}(\nu_t')(x)$ 
		on $ B(y,\delta /r)$.
\end{itemize}
So the comparison principle gives that 
$\nu_t'(x)\leq g(x)$ on $B(y,\delta /r)$.
In particular, we have 
\begin{align*}
m_t & \leq \nu_t(T^{-1}(y)) \\
       & \leq \nu_t'(y) \\
       & \leq g(y)  \\
       & \leq C^{1/r}e^{-m_t/r}\left(-\frac{\delta^2}{r^2}\right)+m_t+1.
\end{align*}

We deduce from that the following upper bound for $\delta$:
\[
\delta\leq \sqrt{\frac{1}{C^{1/r}}}re^{m_t/2r}.
\]
Putting everything together, we obtain 
\[
\mathrm{Vol}(A_t)\leq \mathrm{Vol}(B(y,\delta) \leq C'e^{m_t/2}.
\qedhere
\]
\end{proof}

We turn now to a lower bound on $\mathrm{Vol}(A_t)$.   

\begin{lem}
\label{leqA}
There exists a constant $c>0$ independent of $t$ such that 
\[
\mathrm{Vol}(A_t)\geq c.
\]
\end{lem}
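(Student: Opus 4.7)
The plan is to show that $A_t$ contains a ball of uniform radius around $x_t$, by combining the already-established control on the position of $x_t$ with uniform gradient bounds on $\nu_t$. The integral identity $\int_{\mathfrak{a}^+} e^{-\nu_t} dx = \mathrm{const}$ coming from Proposition~\ref{eqnG} and Proposition~\ref{degrees} could also be used, but it only yields a lower bound proportional to $e^{m_t}$, whereas the direct geometric estimate below gives an unconditional constant.

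First I would invoke Lemma~\ref{translate1} to place $x_t$ in the shifted chamber $b_1\rho + \mathfrak{a}^+$, with $b_1>0$ independent of $t$. This keeps $x_t$ at uniform distance from the Weyl walls, so that Lemma~\ref{jderivatives} applied on the slightly larger region $(b_1/2)\rho + \mathfrak{a}^+$ yields a constant $D_1$ with $|\nabla j|\leq D_1$ on this region. Together with the uniform bound $|\nabla u_t|,\,|\nabla u_{\mathrm{ref}}|\leq d$ from Proposition~\ref{Potentials}, which via the convex combination $w_t = tu_t+(1-t)u_{\mathrm{ref}}$ gives $|\nabla w_t|\leq d$, this produces a constant $D:=d+D_1$ with $|\nabla \nu_t(x)|\leq D$ for every $t$ and every $x\in (b_1/2)\rho + \mathfrak{a}^+$.

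I would then choose $r>0$, independent of $t$, small enough so that the ball $B(x_t, r)$ is entirely contained in $(b_1/2)\rho + \mathfrak{a}^+$ for any admissible position of $x_t\in b_1\rho + \mathfrak{a}^+$; the concrete choice
\[
r := \min\!\bigl(1/D,\ \min_{\alpha}(b_1/2)\langle\alpha,\rho\rangle/|\alpha|\bigr),
\]
with the inner minimum running over the simple roots of $\Phi^+$, works by a direct check: each simple-root coordinate of any $x_t$ is at least $b_1\langle\alpha,\rho\rangle$, and shrinking by $r$ leaves it at least $(b_1/2)\langle\alpha,\rho\rangle$. For any $y\in B(x_t,r)$, the segment from $x_t$ to $y$ lies in the convex set $(b_1/2)\rho + \mathfrak{a}^+$, so the mean value inequality together with $\nu_t(x_t)=m_t$ gives
\[
\nu_t(y) \leq m_t + D|y-x_t| \leq m_t + Dr \leq m_t + 1,
\]
placing $y$ inside $A_t$. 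Thus $B(x_t, r)\subset A_t$, and $\mathrm{Vol}(A_t)\geq \mathrm{Vol}(B(0,r))=:c>0$, independent of $t$.

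The argument is really a one-shot geometric estimate; the only delicate point is the bookkeeping that certifies the uniform ball $B(x_t,r)$ stays inside the region where both gradient bounds hold, and this is precisely what Lemma~\ref{translate1} is designed to provide.
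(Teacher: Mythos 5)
Your proof is correct and arrives by essentially the same geometric route as the paper: locate $x_t$ uniformly away from the walls, bound $|\nabla\nu_t|$ on a shifted chamber via Lemma~\ref{jderivatives} and Proposition~\ref{Potentials}, and conclude that a uniform ball around the minimizer $x_t$ lies inside $A_t$. The one genuine difference is the tool used to certify that the ball stays inside the region where the gradient bound applies. The paper invokes Proposition~\ref{technical} with $M=1$ to get $A_t \subset b_2\rho + \mathfrak{a}^+$ for some $b_2>0$, and then states the ball containment is ``clear''; you instead use only Lemma~\ref{translate1} (which controls just $x_t$, not all of $A_t$) and make the ball small enough by an explicit computation with the simple roots to guarantee $B(x_t,r)\subset (b_1/2)\rho + \mathfrak{a}^+$. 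Your version is therefore marginally lighter --- it avoids the heavier integrability argument behind Proposition~\ref{technical} --- and it makes explicit a detail that the paper glosses over. Both proofs are valid; yours buys a slightly more self-contained and verifiable containment step, while the paper's phrasing is shorter because it reuses Proposition~\ref{technical}, which it has already established for other purposes.
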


\begin{proof}
There exists a constant $b_2$ independent of $t$ such that $0<b_2<b_1$ and 
\[
A_t\subset b_2\rho + \mathfrak{a}^+.
\]
This is a corollary of Proposition~\ref{technical},
taking $b_2$ corresponding to $M=1$.

Then by Lemma~\ref{jderivatives} and Proposition~\ref{Potentials}, 
on $b_2\rho + \mathfrak{a}^+$, $|\nabla \nu_t|$ is bounded independently 
of $t$, say by $C$.
Then it is clear that the ball $B(x_t,1/C)$ is contained in $A_t$.
So $\mathrm{Vol}(A_t)\geq c$ for some $c>0$ independent of $t$.
\end{proof}

\begin{prop}
\label{propV}
The following integral is independent of $t$:
\[ 
\int_{\mathfrak{a}^+} e^{-\nu_t(x)}dx=\int_{2P^+}\prod_{\alpha\in \Phi^+}\left<\alpha , p\right>^2dp=:V.
\]
\end{prop}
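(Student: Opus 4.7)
The plan is to recognize that the identity is essentially a change of variables, reducing $\int_{\mathfrak{a}^+} e^{-\nu_t}\,dx$ to an integral over the moment polytope. Since $\nu_t = w_t + j = w_t - \ln J$, we have $e^{-\nu_t(x)} = e^{-w_t(x)} J(x)$, and Proposition~\ref{eqnG} rewrites this as
\[
e^{-\nu_t(x)} = \mathrm{MA}_{\mathbb{R}}(u_t)(x)\prod_{\alpha \in \Phi^+}\left<\alpha,\nabla u_t(x)\right>^2.
\]

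I would then apply the change of variables $p = \nabla u_t(x)$ on $\mathfrak{a}^+$. By Proposition~\ref{Potentials}(5), the restriction of $\nabla u_t$ to $\mathfrak{a}^+$ is a diffeomorphism onto the interior of $2P^+$ (since $u_t$, being the convex potential of a $K\times K$-invariant positively curved smooth metric on $-K_X$, satisfies the hypotheses). The Jacobian of this diffeomorphism is exactly $\mathrm{MA}_{\mathbb{R}}(u_t)(x)$, so
\[
\int_{\mathfrak{a}^+} e^{-\nu_t(x)}\,dx = \int_{\mathfrak{a}^+} \mathrm{MA}_{\mathbb{R}}(u_t)(x)\prod_{\alpha \in \Phi^+}\left<\alpha,\nabla u_t(x)\right>^2 dx = \int_{2P^+} \prod_{\alpha\in\Phi^+}\left<\alpha,p\right>^2 dp.
\]
The right-hand side is manifestly independent of $t$, which proves the proposition.

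There is essentially no conceptual obstacle here, since both of the required ingredients have been established earlier: the Monge-Ampère form of the continuity method equation on $\mathfrak{a}^+$ (Proposition~\ref{eqnG}) and the diffeomorphism property of $\nabla u_t$ (Proposition~\ref{Potentials}). The only small point to be careful about is that the diffeomorphism is onto the \emph{interior} of $2P^+$, but this affects only a set of measure zero and is harmless for the integral. This argument is, in fact, structurally the same as the one used in Proposition~\ref{degrees} to compute the degree of an ample line bundle, and so we may view the proposition as a version of that identity in which $u_{\mathrm{ref}}$ is replaced by $u_t$ in the relevant potential; the value $V$ is essentially $\mathrm{deg}(-K_X)$ up to the universal constant from Proposition~\ref{degrees}.
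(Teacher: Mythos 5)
Your proof is correct and takes essentially the same route as the paper: the paper invokes Proposition~\ref{degrees} (whose own proof is the change of variables $p=\nabla u$) applied to the potential $u_t$, together with Proposition~\ref{eqnG}; you simply unfold the change of variables directly, citing Proposition~\ref{Potentials}(5) for the diffeomorphism property. Same argument, slightly different packaging.
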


\begin{proof}
Applying Proposition~\ref{degrees} with the ample line bundle $-K_X$, we have, for some 
constant $C$ depending only on $G$, and for any convex potential $u$ of a smooth 
$K\times K$-invariant positively curved hermitian metric on $-K_X$, 
\begin{align*}
\mathrm{deg}(-K_X) & = C \int_{\mathfrak{a}^+}\prod_{\alpha\in \Phi^+}\left<\alpha , \nabla u(x)\right>^2\mathrm{MA}_{\mathbb{R}}(u)(x)dx \\
	& = C \int_{2P^+}\prod_{\alpha\in \Phi^+}\left<\alpha , p\right>^2dp
\end{align*}

We apply this to the convex potential $u_t$, which by Proposition~\ref{eqnG} satisfies 
\[ 
e^{-\nu_t(x)}=\prod_{\alpha\in \Phi^+}\left<\alpha , \nabla u_t(x)\right>^2\mathrm{MA}_{\mathbb{R}}(u_t)(x)
\]
and obtain the statement, with $V=\mathrm{deg}(-K_X)/C$.
\end{proof}

We can now prove the main result of this subsection.
\begin{prop}
\label{linearGrowth}
\mbox{}
\begin{itemize}
\item There exists a constant $C$ independent of $t$, such that $|m_t|\leq C$.
\item There exist a constant $\kappa >0$ and a constant $C$, both independent of $t$, such that 
for $x\in \mathfrak{a}^+$,
\[
\nu_t(x) \geq \kappa |x-x_t|-C.
\]
\end{itemize}
\end{prop}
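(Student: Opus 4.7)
The plan has three phases: lower bound on $m_t$, upper bound on $m_t$, and then linear growth.

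For the lower bound, I would simply combine the two volume estimates already in hand: Lemma~\ref{leqA} gives $\mathrm{Vol}(A_t) \geq c$, while Lemma~\ref{VolA} gives $\mathrm{Vol}(A_t) \leq C e^{m_t/2}$, so $m_t \geq 2\ln(c/C) =: -C_1$.

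For the upper bound on $m_t$, I would use a layer cake computation together with a convexity dilation. Setting $F(L) := \mathrm{Vol}(\{x \in \mathfrak{a}^+ : \nu_t(x) \leq m_t + L\})$ (so $F(1) = \mathrm{Vol}(A_t)$), Proposition~\ref{propV} rewrites as
\[
V = e^{-m_t}\int_0^\infty F(L) e^{-L} dL.
\]
The convexity of $\nu_t$ together with $\nu_t(x_t) = m_t$ gives the key dilation inclusion: for $L \geq 1$, if $\nu_t(y) \leq m_t + L$, then $z := x_t + (y - x_t)/L$ satisfies $\nu_t(z) \leq (1 - 1/L)m_t + (1/L)(m_t + L) = m_t + 1$, so $z \in A_t$. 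Hence $F(L) \leq L^r F(1)$ for $L \geq 1$ (and $F(L) \leq F(1)$ for $L \leq 1$). Combined with Lemma~\ref{VolA},
\[
V \leq e^{-m_t} F(1)\left(1 + \int_1^\infty L^r e^{-L} dL\right) \leq C'' e^{-m_t/2},
\]
which bounds $m_t$ above.

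For the linear growth, with $|m_t|$ now bounded we have $\mathrm{Vol}(A_t) \leq C_3$ uniformly. I would first strengthen Lemma~\ref{leqA} to show $A_t \supseteq B(x_t,\rho_0)$ for a uniform radius $\rho_0 > 0$: Proposition~\ref{technical} with $M=1$ gives $A_t \subseteq b_2\rho + \mathfrak{a}^+$ for some $b_2 < b_1$; on this region $|\nabla \nu_t|$ is uniformly bounded by Lemma~\ref{jderivatives} and Proposition~\ref{Potentials}; since $x_t \in b_1\rho + \mathfrak{a}^+$ by Lemma~\ref{translate1}, its distance to the boundary of $b_2\rho + \mathfrak{a}^+$ is uniformly bounded below, so a uniform ball around $x_t$ sits inside $A_t$. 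Then comes a \emph{spindle} argument: for $y \in A_t$ with $|y - x_t| = R$, the convex hull of $B(x_t,\rho_0) \cup \{y\}$ is contained in the convex set $A_t$, and its volume is at least $c_r \rho_0^{r-1} R$, forcing $R \leq D_0$ for a uniform $D_0$. Applying the dilation inclusion again for $L \geq 1$ gives $\{\nu_t \leq m_t + L\} \subseteq B(x_t, L D_0)$, so $\nu_t(x) \geq m_t + |x - x_t|/D_0$ for $|x - x_t| \geq D_0$; absorbing $|m_t| \leq C$ and the short-range case into the constant yields $\nu_t(x) \geq \kappa|x - x_t| - C'$ with $\kappa = 1/D_0$.

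The main obstacle will be the bookkeeping: confirming that $\rho_0$, $D_0$ and all intermediate constants are genuinely independent of $t$. The layer cake step is routine once the dilation inclusion is observed, and the spindle geometric lemma is elementary, but their effectiveness here hinges on the uniform ball $B(x_t,\rho_0) \subseteq A_t$, which itself depends on the uniform gradient bound for $\nu_t$ on $b_2\rho + \mathfrak{a}^+$ and the location estimate of Lemma~\ref{translate1}.
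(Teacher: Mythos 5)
Your proof is correct and follows essentially the same route as the paper: the layer-cake identity $V=e^{-m_t}\int_0^\infty e^{-s}\mathrm{Vol}(\{\nu_t\le m_t+s\})\,ds$ combined with the dilation inclusion $\{\nu_t\le m_t+s\}\subset s\cdot A_t$ for the upper bound on $m_t$, the two volume lemmas for the lower bound, and the uniform inclusions $B(x_t,\rho_0)\subset A_t\subset B(x_t,D_0)$ together with convexity for the linear growth. The only variations are cosmetic: you derive the lower bound on $m_t$ directly from $c\le\mathrm{Vol}(A_t)\le Ce^{m_t/2}$ rather than via a second layer-cake estimate as the paper does, and you spell out the spindle argument that the paper leaves implicit when passing from a volume bound to $A_t\subset B(x_t,D_0)$; both are fine.
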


\begin{proof}
Following here Donaldson \cite{Don08} rather than Wang and Zhu, we write
\begin{align*}
\int_{\mathfrak{a}^+} e^{-\nu_t(x)}dx 
& = \int_{\mathfrak{a}^+} \int_{\nu_t(x)}^{+\infty}e^{-s}dsdx   \\
& = \int_{-\infty}^{+\infty} e^{-s} \int_{\mathfrak{a}^+}1_{\{\nu_t(x)\leq s\}}dxds \\
& = \int_{m_t}^{+\infty} e^{-s} \mathrm{Vol}(\{\nu_t\leq s\}) ds \\
& = e^{-m_t} \int_{0}^{+\infty} e^{-s} \mathrm{Vol}(\{\nu_t\leq m_t + s\}) ds 
\end{align*}

Now remark that $\{\nu_t\leq m_t + s\}\subset s\cdot A_t$ by convexity of $\nu_t$, 
where $s\cdot A_t$ is 
the $s$-dilation of $A_t$ with center $x_t$.
We deduce from that 
\[ 
\mathrm{Vol}(\{w_t\leq \nu_t + s\}) \leq s^r\mathrm{Vol}(A_t)\leq Cs^re^{m_t/2}.
\]
Applying this to the formula above we obtain 
\begin{align*}
\int_{\mathfrak{a}^+} e^{-\nu_t(x)}dx 
	& \leq e^{-m_t} C e^{m_t/2} \int_0^{+\infty}e^{-s}s^nds \\
	& \leq C'e^{-m_t/2}.
\end{align*}
The left hand side being constant, this inequality gives an upper bound on $m_t$.

For the lower bound, remark that
\begin{align*}
V= \int_{\mathfrak{a}^+} e^{-\nu_t(x)}dx 
& = e^{-m_t} \int_{0}^{+\infty} e^{-s} \mathrm{Vol}(\{w_t\leq \nu_t + s\}) ds \\
& \geq e^{-m_t} \int_{1}^{+\infty} e^{-s} \mathrm{Vol}(\{w_t\leq \nu_t + s\}) ds \\
& \geq e^{-m_t} \mathrm{Vol}(A_t) \int_{1}^{+\infty} e^{-s} ds. 
\end{align*}
By Lemma~\ref{leqA}, $\mathrm{Vol}(A_t) $ admits a lower bound $c$ independent of $t$, so 
\[
V\geq e^{-m_t} c \int_{1}^{+\infty} e^{-s} ds,
\]
and we deduce 
that 
\[ 
-m_t\leq \ln(V) - \ln\left(c \int_{1}^{+\infty} e^{-s} ds\right)
\]
so $-m_t$ is bounded  above by a constant independent of $t$.
Thus we have showed estimates on $|m_t|$.

Now for linear growth, the estimate on $|m_t|$ implies that we know both an upper bound $C_1$ and a lower bound $C_2$ independent of $t$ 
for the volume of $A_t$.
Since this set is convex, and contains a ball $B(x_t,\delta_0)$ of fixed radius $\delta_0$ independent of $t$  
by the proof of Lemma~\ref{leqA}, 
this implies that $A_t$ is included in a ball $A_t\subset B(x_t,\delta)$ where $\delta$ only depends on 
$C_1$ and $\delta_0$.   

By convexity of $\nu_t$, this implies that $\nu_t(x)\geq |x-x_t|/\delta +m_t$ outside of 
the ball, and we can 
extend this inequality to the whole of $\mathfrak{a}^+$ simply by subtracting 1:
\[
\nu_t(x)\geq |x-x_t|/\delta+m_t-1
\]
everywhere.
Using again the fact that $m_t$ is uniformly bounded we get the result with $\kappa = 1/\delta$.    
\end{proof}

\section{Obstruction, and upper bound on $R(X)$}
\label{sec_obstruction}

We will here explain how our condition appears as a necessary condition, then 
obtain an upper bound on the greatest Ricci lower bound. 
Everything relies on the following vanishing statement, which will also be 
of major importance in the next section.

\subsection{A vanishing integral}

\begin{prop}
\label{zeroInt}
Let $u$ be the convex potential of a smooth positive metric on $-K_X$. Define $\nu:=u+j$. 
Let $\xi$ be any vector in $\overline{\mathfrak{a}^+}$. Then 
\[
\int_{\mathfrak{a}^+} \frac{\partial \nu}{\partial \xi} e^{-\nu}dx=0.
\]
\end{prop}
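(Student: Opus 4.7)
The idea is to write $\frac{\partial \nu}{\partial \xi} e^{-\nu} = -\partial_\xi(e^{-\nu})$ and integrate this total derivative by Fubini along the $\xi$-direction, using vanishing boundary values both at the walls of $\mathfrak{a}^+$ and at infinity. We may assume $\xi \neq 0$. After choosing a linear complement $H$ to $\mathbb{R}\xi$ in $\mathfrak{a}$, the claim reduces, once Fubini is justified, to showing that for every $y \in H$ with $I_y := \{s \in \mathbb{R} : y + s\xi \in \mathfrak{a}^+\}$ nonempty,
\[
\int_{I_y} \partial_s\bigl(e^{-\nu(y+s\xi)}\bigr)\,ds = 0.
\]

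The first step is $L^1$ integrability of $|\partial_\xi \nu|\, e^{-\nu}$ on $\mathfrak{a}^+$. Combining Proposition~\ref{Potentials} with Lemma~\ref{Dominate} gives the lower bound $\nu(x) \geq v(x) - \langle 4\rho, x\rangle + C$, where $v$ is the support function of $2P$; since $X$ is Fano we have $4\rho \in \mathrm{Int}(2P)$, which forces $v(\xi') > \langle 4\rho, \xi'\rangle$ for every $\xi' \neq 0$ and thus $e^{-\nu}$ decays exponentially at infinity. Near a wall $\{\langle \alpha_i, \cdot\rangle = 0\}$, the only singular piece of $\partial_\xi \nu$ is $\partial_\xi j = -2\sum_{\alpha\in\Phi^+}\langle \alpha, \xi\rangle \coth\langle \alpha, x\rangle$, which behaves like $\langle\alpha_i,x\rangle^{-1}$; but $e^{-\nu} = e^{-u}J$ and $J$ vanishes like $\sinh^2\langle\alpha_i,x\rangle \sim \langle \alpha_i, x\rangle^2$, so the product is bounded by $\langle \alpha_i, x\rangle$ times a bounded factor and is integrable. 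Fubini then applies.

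Next I would determine the shape of $I_y$. Because $\xi \in \overline{\mathfrak{a}^+}$, each inequality $\langle \alpha_i, y + s\xi\rangle > 0$ is either independent of $s$ (when $\langle\alpha_i,\xi\rangle = 0$) or becomes more restrictive only as $s$ decreases (when $\langle\alpha_i,\xi\rangle > 0$). Hence $I_y$, when nonempty, is an interval of the form $(s_1(y), +\infty)$, with $s_1(y) = -\infty$ only in the degenerate case $\xi \in \mathfrak{a}_t$ in which $I_y = \mathbb{R}$. By the fundamental theorem of calculus the inner integral equals the difference of the boundary values of $e^{-\nu(y+s\xi)}$, and both vanish: as $s \to +\infty$ the Fano growth estimate above makes $\nu \to +\infty$, and at $s \to s_1(y)^+$ (when finite) the point $y + s_1(y)\xi$ lies on some wall $\{\langle \alpha_i, \cdot\rangle = 0\}$ with $\langle \alpha_i, \xi\rangle > 0$, so $\sinh^2\langle \alpha_i, y + s\xi\rangle \to 0$, forcing $j \to +\infty$ while $u$ stays finite and thus $e^{-\nu} \to 0$. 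The analogous argument at $s \to -\infty$ in the case $\xi \in \mathfrak{a}_t$ uses $\langle\rho,\xi\rangle = 0$ together with $4\rho \in \mathrm{Int}(2P)$, which gives $v(-\xi) > 0$.

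The only delicate point is the near-wall integrability, because the quadratic vanishing of $J$ must precisely absorb the simple-pole singularity of $\coth$ in $\partial_\xi j$; this cancellation is exactly what makes the stated identity meaningful. Everything else is routine Fubini plus the fundamental theorem of calculus.
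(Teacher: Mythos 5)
Your proof is correct, but it takes a genuinely different route from the paper. The paper integrates over a family of compact boxes $Q(\epsilon,M) \subset \mathfrak{a}^+$ (aligned with a basis generating $\mathfrak{a}^+ \cap \mathfrak{a}_{ss}$ as a cone and a basis of $\mathfrak{a}_t$), applies the divergence theorem on each box to convert the left-hand side to a boundary integral of $e^{-\nu}\langle\xi,\mu\rangle$, and then lets $\epsilon\to 0$ and $M\to\infty$: the part of the boundary near the Weyl walls contributes nothing because $e^{-\nu}=e^{-u}J$ extends continuously by zero there, and the outer boundary contributes nothing because of the exponential decay coming from $u \geq v + C_1$ and $4\rho \in \mathrm{Int}(2P)$. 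You instead slice $\mathfrak{a}^+$ into lines parallel to $\xi$ via Fubini and apply the one-dimensional fundamental theorem of calculus on each slice $I_y$. The two arguments use the same two analytic inputs (vanishing of $e^{-\nu}$ at walls, exponential decay at infinity, together with the $L^1$ integrability estimate to justify the limiting process), so they are equivalent in substance. What your version buys is that it makes the ``total derivative in the $\xi$-direction'' structure of the integrand fully explicit, at the cost of a case analysis on the shape of $I_y$ (in particular the degenerate case $\xi \in \mathfrak{a}_t$, where $j$ is constant along the slice and the decay at $s\to -\infty$ must come entirely from $u$, using $v(-\xi)>0$). The paper's box argument avoids this case analysis because the boxes are aligned with the chamber and with $\mathfrak{a}_t$ from the start, so all directions are handled uniformly; on the other hand it requires a slightly more careful choice of exhaustion than the natural ``$\xi$-slicing.'' One small point worth making precise in your write-up: your near-wall integrability estimate is stated for a single singular $\coth$ factor, but near lower-dimensional faces several $\coth\langle\alpha_i,x\rangle$ blow up simultaneously; this is harmless since the corresponding $\sinh^2$ factors in $J$ absorb them term by term, but it deserves a sentence.
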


Before we get to the proof, let us remark that the function considered is integrable. 
More generally, we can remark first that for any potential $u_0$, and any vector $\xi$, the
function $ \frac{\partial u_0}{\partial \xi} e^{-\nu}$ is integrable on $\mathfrak{a}^+$.
This is the case because $\nabla u_0 \in 2P$, and 
$e^{-\nu}\leq Ce^{-(v-4\rho)+C}$ (by Proposition~\ref{Potentials} and Lemma~\ref{Dominate}) is obviously 
integrable. 

Secondly, we have to show that the function 
$\frac{\partial j}{\partial \xi} e^{-\nu}= \frac{\partial j}{\partial \xi}J e^{-u}$
is integrable. 
Write
\begin{align*}
\frac{\partial j}{\partial \xi}(x)J(x) 
& = -2\sum_{\alpha\in \Phi^+} \left<\alpha , \xi\right>\coth\left<\alpha , x\right>\prod_{\beta\in \Phi^+}\sinh^2\left<\beta,x\right> \\
& = -2\sum_{\alpha \in \Phi^+} \left<\alpha , \xi\right> \cosh\left<\alpha , x\right>\sinh\left<\alpha , x\right>\prod_{\beta \neq \alpha}\sinh^2\left<\beta , x\right>.
\end{align*}
Then  by a computation similar to Lemma~\ref{Dominate}, we have 
\[
\left|e^{4\rho} \frac{\partial j}{\partial \xi}J\right|  \leq Ce^{4\rho},
\]
so again $\frac{\partial j}{\partial \xi} e^{-\nu}$ is integrable.

\begin{proof}
Choose a basis $(e_i)_{i=1..s}$ of the semisimple part $\mathfrak{a}_{ss}$ which generate 
$\mathfrak{a}^+\cap \mathfrak{a}_{ss}$ as a cone, 
and a basis $(f_j)_{j=1..r-s}$ of the toric part $\mathfrak{a}_t$. 
Consider the following sets, for $0\leq \epsilon<M$:
\[ 
Q(\epsilon, M):= 
\Big\{ \sum_i x_ie_i + \sum_j y_jf_j ~ ; ~ 
\forall i ~ \epsilon \leq x_i \leq M, ~ \forall j ~ -M\leq y_j\leq M \Big\}.
\]
Note that $\partial Q(\epsilon, M)=S_1(\epsilon, M) \cup S_2(\epsilon, M)$, where:
\begin{align*}
S_1(\epsilon,M)& := \Big\{ \sum_i x_ie_i + \sum_j y_jf_j \in Q(\epsilon, M)~;~ \exists i ~x_i=\epsilon \Big\}\\
S_2(\epsilon,M)& := \Big\{ \sum_i x_ie_i + \sum_j y_jf_j \in Q(\epsilon, M)~;~ \exists i ~x_i=M ~ \mathrm{or} ~ \exists j ~|y_j|=M \Big\}.
\end{align*}

Remark that $\frac{\partial \nu}{\partial \xi} e^{-\nu}=-\frac{\partial e^{-\nu}}{\partial \xi}$.
Then by the divergence formula applied to $e^{-\nu}$ we have for $\epsilon>0$,
\[
\int_{Q(\epsilon, M)} \frac{\partial \nu}{\partial \xi} e^{-\nu}dx =
\int_{S_1(\epsilon,M)\cup S_2(\epsilon,M)} e^{-\nu} \left<\xi,\mu\right> d\sigma 
\]
where $\mu$ is the exterior normal and $d\sigma$ is the surface area.

Write now $e^{-\nu}=e^{-u}J$. This is a continuous function on $\mathfrak{a}$, and it 
vanishes on the Weyl walls. 
Fixing $M$, we can thus let $\epsilon$ tend to 0, and we have that $e^{-\nu}$ 
tends uniformly to 0 on $S_1(\epsilon,M)$, so 
$\int_{S_1(\epsilon,M)} e^{-\nu} \left<\xi,\mu\right> d\sigma$ tends to 0.

We thus have 
\[
\int_{Q(0, M)} \frac{\partial \nu}{\partial \xi} e^{-\nu}dx =
\int_{S_2(0,M)} e^{-\nu} \left<\xi,\mu\right> d\sigma .
\]
Now as we have seen before, we have 
$e^{-\nu} \leq Ce^{-(v-4\rho)+C}$, so $e^{-\nu}(x)$ decreases exponentially as 
$|x|$ tends to infinity. Since the area of $S_2(0,M)$ grows polynomially, 
this ensures that 
$\int_{S_2(0,M)} e^{-\nu} \left<\xi,\mu\right> d\sigma$ tends to zero as $M$ tends 
to $\infty$.
This ends the proof.
\end{proof}

\subsection{Obstruction to the existence of a Kähler-Einstein metric}

Let us now explain the relation between convex potentials and the barycenter 
$\mathrm{bar}_{DH}(P^+)$.
Recall that $V$ denotes the volume of the polytope $P^+$ with respect to the 
Duistermaat-Heckman measure, and is equal, up to a multiplicative constant, 
to the volume of the anticanonical line bundle.  

\begin{prop}
\label{barDH}
Assume that $u$ is the convex potential of a smooth, positively curved hermitian 
metric on $-K_X$, then 
\[
\int_{\mathfrak{a}^+} \frac{\partial u}{\partial \xi} \prod_{\alpha\in\Phi^+}\left<\alpha , \nabla u\right>^2
\mathrm{MA}_{\mathbb{R}}(u)dx  = 
\left< \xi, \mathrm{bar}_{DH}(2P^+)\right> V.
\]
\end{prop}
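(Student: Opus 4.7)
The plan is to compute the integral by performing a change of variables. By Proposition~\ref{Potentials}, the gradient map $\nabla u$ is a diffeomorphism from $\mathfrak{a}^+$ onto the interior of $2P^+$, and its Jacobian determinant is precisely $\mathrm{MA}_{\mathbb{R}}(u)$. Setting $p = \nabla u(x)$, one has $\frac{\partial u}{\partial \xi}(x) = \langle \nabla u(x), \xi\rangle = \langle p, \xi\rangle$, so the factor $\mathrm{MA}_{\mathbb{R}}(u)\,dx$ becomes $dp$ and the integrand transforms in a clean way.

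Concretely, I would write
\begin{align*}
\int_{\mathfrak{a}^+} \frac{\partial u}{\partial \xi}(x) \prod_{\alpha\in\Phi^+}\langle \alpha, \nabla u(x)\rangle^2\, \mathrm{MA}_{\mathbb{R}}(u)(x)\, dx
&= \int_{2P^+} \langle p, \xi\rangle \prod_{\alpha\in\Phi^+}\langle \alpha, p\rangle^2\, dp \\
&= \Bigl\langle \xi,\ \int_{2P^+} p \prod_{\alpha\in\Phi^+}\langle \alpha, p\rangle^2\, dp \Bigr\rangle,
\end{align*}
and then recognize the right-hand side as $\langle \xi, \mathrm{bar}_{DH}(2P^+)\rangle V$ by the very definition of the Duistermaat-Heckman barycenter and of $V$ (as given in Proposition~\ref{propV}).

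There is no real obstacle here: the only subtleties are to invoke the correct statement from Proposition~\ref{Potentials} (that $\nabla u : \mathfrak{a}^+ \to \mathrm{Int}(2P^+)$ is a diffeomorphism, with Jacobian equal to $\mathrm{MA}_{\mathbb{R}}(u)$), and to check that all integrals are finite so the change of variables is justified. Finiteness follows from the asymptotic control $u(x) \geq v(x) + C_1$ given in Proposition~\ref{Potentials} together with the boundedness of $\nabla u$, or equivalently from the fact that $2P^+$ is bounded so the integral on the right-hand side is manifestly finite. Thus the proof reduces to a single application of the change of variables, making this proposition essentially a restatement of Proposition~\ref{propV} with the integrand weighted by a linear form.
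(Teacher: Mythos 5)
Your proof is correct and follows exactly the paper's own argument: a change of variables $p = \nabla u(x)$, justified by Proposition~\ref{Potentials}, which turns the left-hand side into $\int_{2P^+} \langle p,\xi\rangle \prod_{\alpha\in\Phi^+}\langle\alpha,p\rangle^2\,dp = \langle\xi,\mathrm{bar}_{DH}(2P^+)\rangle V$. The extra remarks on finiteness and the analogy with Proposition~\ref{propV} are accurate but not needed beyond what the paper already records.
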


\begin{proof}
We simply use the change of variable $p=\nabla u(x)$, just as in the proof of Proposition~\ref{degrees}.
It gives: 
\begin{align*}
\int_{\mathfrak{a}^+} \frac{\partial u}{\partial \xi} \prod_{\alpha\in\Phi^+}\left<\alpha ,\nabla u\right>^2
\mathrm{MA}_{\mathbb{R}}(u)dx 
&= \int_{2P^+} \left< p,\xi \right> \prod_{\alpha\in\Phi^+}\left<\alpha,p\right>^2dp \\
&= \left< \xi, \mathrm{bar}_{DH}(2P^+)\right> V.
\qedhere
\end{align*}
\end{proof}

We now prove an obstruction to the existence of Kähler-Einstein metrics on $X$. 
Recall that $\Xi$ denotes the relative interior of the cone generated by the 
positive roots $\Phi^+$.

\begin{prop}
\label{prop_obstruction}
Assume there exists a Kähler-Einstein metric on $X$. Then 
\[
\mathrm{bar}_{DH}(2P^+)  \in 4\rho + \Xi.
\]
\end{prop}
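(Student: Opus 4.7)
The plan is to exploit the fact that a Kähler-Einstein metric corresponds to $t=1$ in the continuity method, so that $w_1 = u_1$ and Proposition~\ref{eqnG} reads
\[
\mathrm{MA}_{\mathbb{R}}(u_1)\prod_{\alpha\in\Phi^+}\left<\alpha,\nabla u_1\right>^2 = e^{-u_1}J = e^{-\nu_1},
\]
where $\nu_1 = u_1 + j$ is exactly the function to which the vanishing identity of Proposition~\ref{zeroInt} applies. As in the $t<1$ analysis, one may assume the KE metric is $K\times K$-invariant (via averaging or the appropriate uniqueness statement), so $u_1$ is a genuine smooth convex potential and Propositions~\ref{barDH}, \ref{propV} and \ref{zeroInt} all apply to it.

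Now fix $\xi \in \overline{\mathfrak{a}^+}$ and write Proposition~\ref{zeroInt} for $u=u_1$ as
\[
\int_{\mathfrak{a}^+} \frac{\partial u_1}{\partial \xi}\, e^{-\nu_1}\,dx \;+\; \int_{\mathfrak{a}^+} \frac{\partial j}{\partial \xi}\, e^{-\nu_1}\,dx \;=\; 0.
\]
Substituting $e^{-\nu_1} = \mathrm{MA}_{\mathbb{R}}(u_1)\prod\left<\alpha,\nabla u_1\right>^2$ into the first integral and invoking Proposition~\ref{barDH} identifies it with $\left<\xi,\mathrm{bar}_{DH}(2P^+)\right>V$, where $V = \int_{\mathfrak{a}^+} e^{-\nu_1}\,dx$ by Proposition~\ref{propV}. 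For $\xi$ in the \emph{open} Weyl chamber $\mathfrak{a}^+$, the strict pointwise inequality in Lemma~\ref{j4rho}, combined with the strict positivity of $e^{-\nu_1}$ on $\mathfrak{a}^+$, upgrades to the strict integrated inequality
\[
\int_{\mathfrak{a}^+} \frac{\partial j}{\partial \xi}\, e^{-\nu_1}\,dx \;<\; \left<-4\rho,\xi\right>V,
\]
whence $\left<\xi,\mathrm{bar}_{DH}(2P^+)-4\rho\right> > 0$ for every $\xi \in \mathfrak{a}^+$.

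To conclude that the difference lies in $\Xi$ (not merely the closure of the cone generated by $\Phi^+$), I would also use the toric part: for $\xi \in \mathfrak{a}_t$, both $\xi$ and $-\xi$ lie in $\overline{\mathfrak{a}^+}$ and $\partial j/\partial\xi \equiv 0$ by Lemma~\ref{j4rho}, so the same identity forces $\left<\xi,\mathrm{bar}_{DH}(2P^+)\right>=0$; since $4\rho \perp \mathfrak{a}_t$ (the roots vanish on $\mathfrak{a}_t$), this gives $\mathrm{bar}_{DH}(2P^+)-4\rho \in \mathfrak{a}_{ss}$. Pairing strictly positively with every $\xi \in \mathfrak{a}^+$ then places this vector in the relative interior of the dual of $\overline{\mathfrak{a}^+\cap\mathfrak{a}_{ss}}$, i.e.\ in $\Xi$. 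The only subtle point is preserving strictness through integration, but this is automatic because the pointwise inequality in Lemma~\ref{j4rho} is strict everywhere and the measure $e^{-\nu_1}dx$ has full support on $\mathfrak{a}^+$; everything else is a mechanical assembly of the previously established identities.
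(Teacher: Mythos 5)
Your proposal reproduces the paper's own argument essentially verbatim: apply Proposition~\ref{zeroInt} to the Kähler--Einstein convex potential $u$ (so $\nu = u+j$), identify $\int \frac{\partial u}{\partial \xi}e^{-\nu}\,dx$ with $\left<\xi, \mathrm{bar}_{DH}(2P^+)\right>V$ via Propositions~\ref{propV} and~\ref{barDH}, and bound $\int \frac{\partial j}{\partial \xi}e^{-\nu}\,dx$ strictly by $\left<-4\rho,\xi\right>V$ using Lemma~\ref{j4rho}. Your explicit discussion of $\xi\in\mathfrak{a}_t$, forcing $\mathrm{bar}_{DH}(2P^+)-4\rho\in\mathfrak{a}_{ss}$, is a nice clarification that the paper leaves implicit; the only point to tighten (present already in the paper's own terse final sentence) is that strict positivity of $\left<m,\xi\right>$ for $\xi$ ranging over the \emph{open} chamber $\mathfrak{a}^+$ does not by itself place $m$ in $\Xi$ (e.g.\ $m$ could lie on a face of the dual cone), so one should note that the proof of Lemma~\ref{j4rho} in fact gives the strict bound for every $\xi\in\overline{\mathfrak{a}^+}\setminus\mathfrak{a}_t$, after which $\left<m,\xi\right>>0$ for all such $\xi$ does characterize membership in $\Xi$.
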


\begin{proof}
By Proposition~\ref{eqnG}, the Kähler-Einstein equation restricted to the open orbit reads:
\[
\mathrm{MA}_{\mathbb{R}}(u)\prod_{\alpha\in\Phi^+}\left<\alpha,\nabla u\right>^2=e^{-u}J.
\]
Suppose that there exists a solution $u$.
Applying Proposition~\ref{zeroInt} to $u$ gives 
\[
\int_{\mathfrak{a}^+} \frac{\partial \nu}{\partial \xi} e^{-\nu}dx=0,
\]
so by definition of $\nu=u+j$, 
\[
\int_{\mathfrak{a}^+} \frac{\partial u}{\partial \xi} e^{-\nu}dx + 
\int_{\mathfrak{a}^+} \frac{\partial j}{\partial \xi} e^{-\nu}dx = 0.
\]

Since $u$ is solution to the Kähler-Einstein equation, we have 
\[
e^{-\nu}=e^{-u}J=\mathrm{MA}_{\mathbb{R}}(u)\prod_{\alpha\in\Phi^+}\left<\alpha ,\rho\right>^2.
\]
In particular, by Proposition~\ref{propV},
\[
\int_{\mathfrak{a}^+} e^{-\nu}dx = V
\]
is constant, and by Proposition~\ref{barDH}, 
\[
\int_{\mathfrak{a}^+} \frac{\partial u}{\partial \xi} e^{-\nu}dx
= \left< \xi, \mathrm{bar}_{DH}(2P^+)\right> V.
\]

By Lemma~\ref{j4rho}, we have for $\xi \in \mathfrak{a}^+$, 
\[
\int_{\mathfrak{a}^+} \frac{\partial j}{\partial \xi} e^{-\nu}dx < -4\left<\rho,\xi\right>V.
\]
Combining these facts, we have:
\begin{align*}
\left< \xi, \mathrm{bar}_{DH}(2P^+)\right> V -4\left<\rho,\xi\right>V &> 
\int_{\mathfrak{a}^+} \frac{\partial u}{\partial \xi} e^{-\nu}dx+ \int_{\mathfrak{a}^+} \frac{\partial j}{\partial \xi} e^{-\nu}dx \\
&>0.
\end{align*}
Dividing by $V$ we obtain that for any $\xi \in \mathfrak{a}^+$, 
\[
\left< \xi, \mathrm{bar}_{DH}(2P^+)-4\rho \right> >0.
\]
The large inequality would mean $\mathrm{bar}_{DH}(2P^+)-4\rho \in (\mathfrak{a}^+)^{\vee}$, 
and here the strict inequality means that $\mathrm{bar}_{DH}(2P^+)-4\rho$ 
is in the relative interior of $(\mathfrak{a}^+)^{\vee}$, which is precisely $\Xi$.
\end{proof}

\begin{exa}
This obstruction allows to show that the manifold $X_2$ does not admit any 
Kähler-Einstein metric. Recall that this is a compactification of 
$\mathrm{Sp}_4(\mathbb{C})$ so the corresponding root system is $B_2$.
Denote by $\alpha_1$ and $\alpha_2$ the simple roots, with $\alpha_2$ the 
long root. Then the other positive roots are $\alpha_1+\alpha_2$ and $2\alpha_1+\alpha_2$,
so $2\rho=4\alpha_1 + 3\alpha_2$.

Choosing a realization of $B_2$ in the euclidean plane with 
$\alpha_1=(1,0)$ and $\alpha_2=(-1,1)$, we compute that,
for $p=x\alpha_1 + y\alpha_2$,
\[
\prod_{\alpha\in \Phi^+} \left<\alpha , p\right>^2 = x^2y^2(x-y)^2(-x+2y)^2.
\]
We can then compute 
\[
\mathrm{bar}_{DH}(P^+)=
\frac{278037566905}{66955221696}\alpha_1 +\frac{3043253830}{1046175339}\alpha_2.
\]
In particular the coordinate in $\alpha_2$ is strictly smaller than $3$ so there 
exists no Kähler-Einstein metric on $X_2$.
Figure~\ref{BarX2} gives a representation of the polytope $P^+$, the barycenter is represented
by a cross and the cone delimited by the dashed lines is $2\rho + \Xi$. 

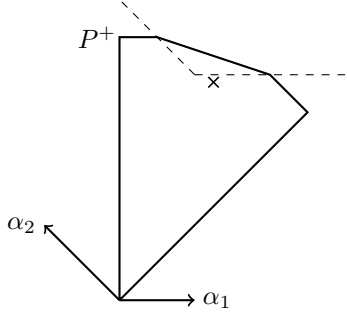
\begin{figure}
\centering
\begin{tikzpicture}[scale=1]
\draw[thick] (0,0) -- (5/2,5/2) -- (2,3) -- (1/2,7/2) -- (0,7/2) -- (0,0);
\draw (-0.3,7/2) node {$P^+$};
\draw[->, thick] (0,0) -- (1,0);
\draw (1.3,0) node {$\alpha_1$};
\draw[->, thick] (0,0) -- (-1,1);
\draw (-1-0.3,1) node {$\alpha_2$};
\draw[dashed] (1,3) -- (3,3);
\draw[dashed] (1,3) -- (0,4);
\draw[semithick] (1.25-0.07,2.9-0.07) -- (1.25+0.07,2.9+0.07);
\draw[semithick] (1.25+0.07,2.9-0.07) -- (1.25-0.07,2.9+0.07);
\end{tikzpicture}
\caption{Barycenter for $X_2$}
\label{BarX2}
\end{figure}

\end{exa}

\subsection{Upper bound on $R(X)$}

\begin{prop}
\label{prop_upper}
Assume that $X$ admits no Kähler-Einstein metrics, then the greatest Ricci lower bound of $X$ is lower than 
or equal to the supremum of all $t<1$ such that 
\[
4\rho + \frac{t}{t-1}( \mathrm{bar}_{DH}(2P^+)-4\rho)\in -\Xi+2P^+.
\]
\end{prop}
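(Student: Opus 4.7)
The plan is to adapt the proof of Proposition~\ref{prop_obstruction} to the continuity-method equation at parameter $t<1$. Fix $t<R(X)$, so that there exists a solution $u_t$ of the equation in Proposition~\ref{eqnG}; I will derive the stated containment for this $t$, and then let $t$ vary.

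First I would observe that the proof of Proposition~\ref{zeroInt} carries over verbatim with $\nu_t=w_t+j$ replacing $\nu=u+j$, yielding
\[
\int_{\mathfrak{a}^+}\frac{\partial \nu_t}{\partial \xi}\, e^{-\nu_t}\,dx=0\qquad \text{for every } \xi\in\overline{\mathfrak{a}^+}.
\]
The three ingredients used there — continuity of $e^{-\nu}$ on $\overline{\mathfrak{a}^+}$, its vanishing along the Weyl walls, and its exponential decay at infinity — hold equally well for $e^{-\nu_t}$: the factor $J$ produces the wall vanishing, while Proposition~\ref{Potentials} and Lemma~\ref{Dominate} give $\nu_t(x)\geq v(x)-4\left<\rho,x\right>+C$, which grows linearly at infinity because $4\rho\in\mathrm{Int}(2P)$ by the Fano hypothesis.

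Next I would expand $\partial_\xi\nu_t=t\,\partial_\xi u_t+(1-t)\,\partial_\xi u_{\mathrm{ref}}+\partial_\xi j$ and evaluate each integral. Writing $e^{-\nu_t}dx=\prod_{\alpha\in\Phi^+}\left<\alpha,\nabla u_t\right>^2\mathrm{MA}_{\mathbb{R}}(u_t)\,dx$ via Proposition~\ref{eqnG} and making the change of variables $p=\nabla u_t(x)$ gives, combined with Propositions~\ref{propV} and \ref{barDH},
\[
\int_{\mathfrak{a}^+}\partial_\xi u_t\, e^{-\nu_t}\,dx=V\left<\xi,\mathrm{bar}_{DH}(2P^+)\right>,\qquad \int_{\mathfrak{a}^+}e^{-\nu_t}dx=V.
\]
Since $\nabla u_{\mathrm{ref}}$ maps $\mathfrak{a}^+$ into $\mathrm{Int}(2P^+)$ by Proposition~\ref{Potentials}, the weighted average
\[
p_{\mathrm{ref}}:=\frac{1}{V}\int_{\mathfrak{a}^+}\nabla u_{\mathrm{ref}}(x)\,e^{-\nu_t(x)}\,dx
\]
lies in the closed polytope $2P^+$, and $\int \partial_\xi u_{\mathrm{ref}}\,e^{-\nu_t}dx=V\left<\xi,p_{\mathrm{ref}}\right>$. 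Finally, Lemma~\ref{j4rho} yields the strict pointwise bound $\partial_\xi j<\left<-4\rho,\xi\right>$ for $\xi\in\mathfrak{a}^+$, hence $\int\partial_\xi j\,e^{-\nu_t}dx<-4\left<\rho,\xi\right>V$. Combining,
\[
\left<\xi,\ t\,\mathrm{bar}_{DH}(2P^+)+(1-t)p_{\mathrm{ref}}-4\rho\right>>0 \qquad \forall\, \xi\in\mathfrak{a}^+,
\]
so $t\,\mathrm{bar}_{DH}(2P^+)+(1-t)p_{\mathrm{ref}}-4\rho\in\Xi$. Rearranging, using $1-t>0$ and that $\Xi$ is a cone, this says $p_{\mathrm{ref}}-\frac{4\rho-t\,\mathrm{bar}_{DH}(2P^+)}{1-t}\in\Xi$, which is equivalent to
\[
4\rho+\frac{t}{t-1}\bigl(\mathrm{bar}_{DH}(2P^+)-4\rho\bigr)\ =\ p_{\mathrm{ref}}-\xi'\ \in\ 2P^+-\Xi\ =\ -\Xi+2P^+
\]
for some $\xi'\in\Xi$. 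Since this conclusion holds for every $t<R(X)$, the claimed upper bound on $R(X)$ follows by taking the supremum.

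The main technical point is the verification of the divergence identity in the first step. One has to check integrability near each wall $\left<\alpha,x\right>=0$, where $\partial_\xi j$ blows up like $\coth\left<\alpha,x\right>$; this singularity is however tempered by the $\sinh^2\left<\alpha,x\right>$ factor inside $e^{-\nu_t}$, producing an integrable boundary behavior of order $\left<\alpha,x\right>$, while the boundary contributions at infinity vanish by the exponential decay of $e^{-\nu_t}$. All remaining steps are bookkeeping that closely mirrors the proof of Proposition~\ref{prop_obstruction}.
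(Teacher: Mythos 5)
Your proof is correct and follows essentially the same route as the paper: apply Proposition~\ref{zeroInt} to $w_t$, split $\partial_\xi\nu_t$ into the $u_t$, $u_{\mathrm{ref}}$ and $j$ pieces, and control them via Propositions~\ref{barDH}, \ref{propV} and Lemma~\ref{j4rho}. The only cosmetic difference is that you package the $u_{\mathrm{ref}}$ contribution as a weighted average $p_{\mathrm{ref}}\in 2P^+$ of $\nabla u_{\mathrm{ref}}$, whereas the paper bounds $\partial_\xi u_{\mathrm{ref}}\le v(\xi)$ pointwise; since $\langle\xi,p_{\mathrm{ref}}\rangle\le v(\xi)$ these are equivalent.
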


\begin{proof}
Consider the equation at time $t$:
\[
\mathrm{MA}_{\mathbb{R}}(u_t)\prod_{\alpha\in\Phi^+}\left<\alpha,\nabla u_t\right>^2=e^{-\nu_t}.
\]
Apply Proposition~\ref{zeroInt} to $w_t$. This gives for any $\xi \in \overline{\mathfrak{a}^+}$,
\[
\int_{\mathfrak{a}^+} \frac{\partial \nu_t}{\partial \xi} e^{-\nu_t}dx=0.
\]
This is equivalent to 
\[
t\int_{\mathfrak{a}^+} \frac{\partial u_t}{\partial \xi}e^{-\nu_t} + 
(1-t)\int_{\mathfrak{a}^+} \frac{\partial u_{\mathrm{ref}}}{\partial \xi}e^{-\nu_t}
+\int_{\mathfrak{a}^+} \frac{\partial j}{\partial \xi}e^{-\nu_t}=0.
\]

Without loss of generality we can assume $t<1$ and divide by $(t-1)V$ to get 
\[ 
\frac{t}{t-1} \int_{\mathfrak{a}^+} \frac{\partial u_t}{\partial \xi}\frac{e^{-\nu_t}}{V} + 
\frac{1}{t-1}\int_{\mathfrak{a}^+} \frac{\partial j}{\partial \xi}\frac{e^{-\nu_t}}{V}
= \int_{\mathfrak{a}^+} \frac{\partial u_{\mathrm{ref}}}{\partial \xi}\frac{e^{-\nu_t}}{V}.
\]
If $v$ is the support function of $2P$, we have for any $x\in \mathfrak{a}^+$, 
$\frac{\partial u_{\mathrm{ref}}}{\partial \xi}(x) \leq v(\xi)$, so 
\[
\int_{\mathfrak{a}^+} \frac{\partial u_{\mathrm{ref}}}{\partial \xi}\frac{e^{-\nu_t}}{V} \leq v(\xi).
\]
On the other hand, we can use here also Lemma~\ref{j4rho}  
to get, for $\xi \in \mathfrak{a}^+$,  
\[
\frac{1}{t-1}\int_{\mathfrak{a}^+} \frac{\partial j}{\partial \xi}\frac{e^{-\nu_t}}{V} > \frac{1}{t-1}(-4\left<\rho,\xi\right>).
\]
We thus have, using Proposition~\ref{barDH},
\begin{align*}
v(\xi) & > 
\frac{t}{t-1} \int_{\mathfrak{a}^+} \frac{\partial u_t}{\partial \xi}\frac{e^{-\nu_t}}{V} +\frac{1}{t-1}\int_{\mathfrak{a}^+} \frac{\partial j}{\partial \xi}\frac{e^{-\nu_t}}{V}\\
& > \frac{t}{t-1}\left< \xi, \mathrm{bar}_{DH}(2P^+) \right> - \frac{1}{t-1}4\left<\rho,\xi\right>\\
& > \left<\xi, 4\rho + \frac{t}{t-1}( \mathrm{bar}_{DH}(2P^+)-4\rho)\right>
\end{align*}

The fact that this is true for all $\xi \in \mathfrak{a}^+$ means, since $v$ is the support 
function of $2P$, that
\[
4\rho + \frac{t}{t-1}( \mathrm{bar}_{DH}(2P^+)-4\rho)\in -\Xi + 2P^+.
\qedhere
\]
\end{proof}

\section{Absence of estimates on $|x_t|$}
\label{sec_absence}

\subsection{Consequence}

We will assume now that there are no Kähler-Einstein metrics on $X$. We will denote by 
$t_{\infty}:=R(X)$ 
the greatest Ricci lower bound. 

Our assumption implies that $|x_t|$ is unbounded as $t$ tends to $t_{\infty}$.
Indeed if it was not the case, then we would have estimates on $|x_t|$ and so by 
Proposition~\ref{lem_est_m_x} and Lemma~\ref{linearGrowth}, 
there would be a solution at time $t_{\infty}$ and by openness for times 
greater than $t_{\infty}$ if $t_{\infty}$. This is a contradiction.

We can find a sequence $t_i$ 
such that $t_i\rightarrow t_{\infty}$ and $|x_{t_i}|\rightarrow \infty$. 
Define $\xi_t \in \mathfrak{a}^+$ as the direction given by the minimum $\xi_t := x_t/|x_t|$.
Up to taking a subsequence, we can also assume that $\xi_t$ admits a limit $\xi_{\infty}\in \overline{\mathfrak{a}^+}$ 
as $t_i\rightarrow t_{\infty}$.

We will consider an integral equality involving $\nu_t$ and consider the limit 
as $t_i\rightarrow t_{\infty}$.
The integral equality follows from Proposition~\ref{zeroInt} applied to 
$w_t$:
\[
\int_{\mathfrak{a}^+}\frac{\partial \nu_t}{\partial \xi}e^{-\nu_t}dx=0.
\]
Recall that $w_t=tu_t+(1-t)u_{\mathrm{ref}}$ by definition, so 
\[
\nu_t=tu_t+(1-t)u_{\mathrm{ref}}+j=t(u_t+j)+(1-t)(u_{\mathrm{ref}}+j).
\]
The vanishing integral thus gives 
\[ 
t \int_{\mathfrak{a}^+}\frac{\partial u_t}{\partial \xi_t}e^{-\nu_t}dx 
+ (1-t) \int_{\mathfrak{a}^+}\frac{\partial u_{\mathrm{ref}}+j}{\partial \xi_t}e^{-\nu_t}dx 
+ \int_{\mathfrak{a}^+}\frac{\partial j}{\partial \xi_t}e^{-\nu_t}dx =0,
\]
which can also be written 
\[
t \int_{\mathfrak{a}^+}\frac{\partial u_t+j}{\partial \xi_t}e^{-\nu_t}dx= (t-1) \int_{\mathfrak{a}^+}\frac{\partial u_{\mathrm{ref}}+j}{\partial \xi_t}e^{-\nu_t}dx.
\]

We will compute the limit of each of these terms as $t_i\rightarrow t_{\infty}$, and obtain the following result.

\begin{prop}
\label{limiteqn}
We have 
\[
t_{\infty} \left<\mathrm{bar}_{DH}(2P^+)-4 \rho , \xi_{\infty} \right>
= (t_{\infty}-1) (v( \xi_{\infty})-\left<4\rho , \xi_{\infty} \right>).
\]
\end{prop}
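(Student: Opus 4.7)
The plan is to divide the vanishing identity
\[
t\int_{\mathfrak{a}^+}\frac{\partial (u_t+j)}{\partial \xi_t}e^{-\nu_t}dx = (t-1)\int_{\mathfrak{a}^+}\frac{\partial (u_{\mathrm{ref}}+j)}{\partial \xi_t}e^{-\nu_t}dx
\]
by $V$, regroup the $j$-terms so that it reads
\[
\frac{t}{V}\int\frac{\partial u_t}{\partial \xi_t}e^{-\nu_t}dx + \frac{1}{V}\int\frac{\partial j}{\partial \xi_t}e^{-\nu_t}dx = \frac{t-1}{V}\int\frac{\partial u_{\mathrm{ref}}}{\partial \xi_t}e^{-\nu_t}dx,
\]
and take the limit along $t_i\to t_\infty$. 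The announced identity then follows by elementary rearrangement once the three integrals are shown to tend respectively to $t_\infty\langle \xi_\infty,\mathrm{bar}_{DH}(2P^+)\rangle$, $-4\langle \rho,\xi_\infty\rangle$, and $(t_\infty-1)v(\xi_\infty)$.

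The first integral is handled without any limiting argument: the change of variable $p=\nabla u_t(x)$ together with the Monge-Ampère equation of Proposition \ref{eqnG} gives $\int \partial u_t/\partial \xi_t \cdot e^{-\nu_t}\,dx = \langle \xi_t, \mathrm{bar}_{DH}(2P^+)\rangle V$ exactly, as in the proof of Proposition \ref{barDH}. For the other two integrals the key analytic input is that the probability measure $d\mu_t := e^{-\nu_t}dx/V$ concentrates near $x_t$: combining the linear growth estimate $\nu_t(x)\geq \kappa|x - x_t| - C$ of Proposition \ref{linearGrowth} with the uniform bound on $|m_t|$ yields a tail estimate $\mu_t(\{|x - x_t|\geq R\})\leq C'e^{-\kappa R/2}$, uniform in $t$, so only the behavior of the integrands in a bounded neighborhood of $x_t$ matters in the limit.

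For the $u_{\mathrm{ref}}$-integral, the universal upper bound $\partial u_{\mathrm{ref}}/\partial \xi_t\leq v(\xi_t)$ gives $\limsup\leq v(\xi_\infty)$. For the matching lower bound I use that the support function $v$ of $2P$ is linear on the normal cone to the face maximizing $\langle\cdot,\xi_\infty\rangle$; this cone contains $\xi_\infty$ in its closure, and combining it with the two-sided comparison of Proposition \ref{Potentials} applied at $x_0 = x_t + y - s\xi_\infty$ forces $(\partial u_{\mathrm{ref}}/\partial \xi_t)(x_t+y)\to v(\xi_\infty)$ uniformly for $|y|\leq R$, whence concentration delivers the limit. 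For the $j$-integral I plan to use the explicit formula $\partial j/\partial \xi_t(x) = -2\sum_{\alpha\in \Phi^+}\langle\alpha,\xi_t\rangle\coth\langle\alpha,x\rangle$: for roots $\alpha$ with $\langle\alpha,\xi_\infty\rangle>0$ one has $\langle\alpha,x_t+y\rangle\to\infty$, hence $\coth\to 1$ uniformly on $|y|\leq R$; for roots with $\langle\alpha,\xi_\infty\rangle=0$ the prefactor $\langle\alpha,\xi_t\rangle$ already tends to zero. Summing gives the expected $-4\langle\rho,\xi_\infty\rangle = -2\sum_{\alpha\in\Phi^+}\langle\alpha,\xi_\infty\rangle$.

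The main obstacle is the $j$-integral in the case $\xi_\infty\in\partial\mathfrak{a}^+$: the walls $\{\langle\alpha,\cdot\rangle=0\}$ corresponding to simple roots orthogonal to $\xi_\infty$ do not recede at the base point $x_t$, so the $\coth\langle\alpha,x_t+y\rangle$ factors are not uniformly bounded on a neighborhood of $x_t$. The remedy is to cut off a uniform neighborhood of each such wall using Proposition \ref{technical}, which makes $e^{-\nu_t}$ arbitrarily small there; on the complement the boundedness of $\coth$ combined with $\langle\alpha,\xi_t\rangle\to 0$ forces the contribution of those roots to vanish in the limit.
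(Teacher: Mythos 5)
Your overall architecture — split the vanishing identity into three integrals, compute the $u_t$-integral exactly via Proposition~\ref{barDH}, use concentration near $x_t$ for the other two, and rearrange — is the same as the paper's, and your algebraic rearrangement is correct. The $u_{\mathrm{ref}}$-integral is handled essentially as in Proposition~\ref{limu}, relying on the fact that $\partial u_{\mathrm{ref}}/\partial\xi_t$ is bounded by $d$ (Proposition~\ref{Potentials}), so concentration alone controls the tail, and $\partial u_{\mathrm{ref}}/\partial\xi_t\to v(\xi_\infty)$ uniformly on $B(x_t,\delta)$.

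The gap is in the $j$-integral. Your stated justification for the near-wall cut-off — "Proposition~\ref{technical} makes $e^{-\nu_t}$ arbitrarily small there" — is not sufficient, because $\partial j/\partial\xi_t$ is not locally integrable near the walls: near a wall $\{\langle\alpha,\cdot\rangle=0\}$ one has $\coth\langle\alpha,x\rangle\sim 1/\langle\alpha,x\rangle$, whose integral over any strip $\{0<\langle\alpha,x\rangle<b\}$ diverges. A pointwise bound $e^{-\nu_t}\leq Ce^{-M}$ multiplied against a non-integrable function does not give a small integral. What actually saves the day is a cancellation you never invoke: $\coth\langle\alpha,x\rangle\,J(x)$ contains a factor $\cosh\langle\alpha,x\rangle\sinh\langle\alpha,x\rangle$ in place of $\sinh^2\langle\alpha,x\rangle$, so $\partial j/\partial\xi_t\cdot e^{-\nu_t}=\partial j/\partial\xi_t\cdot J\,e^{-w_t}$ is bounded and in fact vanishes at the walls. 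Similarly, your global concentration argument ("only a bounded neighborhood of $x_t$ matters") is valid only for uniformly bounded integrands; $\partial j/\partial\xi_t$ is bounded only on $b\rho+\mathfrak{a}^+$ (Lemma~\ref{jderivatives}), so the near-wall part of the tail $\{|x-x_t|>R\}\setminus(b\rho+\mathfrak{a}^+)$ requires a separate estimate that your sketch omits.

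The paper sidesteps all of this with Lemma~\ref{djsurDc}: instead of estimating $\partial j/\partial\xi_t$ near the walls, it writes $\partial j/\partial\xi=\partial\nu_t/\partial\xi-\partial w_t/\partial\xi$ on $\mathfrak{a}^+\setminus D$, cancels the $\nu_t$-piece using Proposition~\ref{zeroInt} and a divergence-theorem boundary integral on $D$ (controlled by Lemma~\ref{Domain}), and bounds the $w_t$-piece by $d$ times the small residual mass. That device entirely avoids the non-integrability of $\partial j/\partial\xi_t$. Your route can be repaired by making the $\coth\cdot J$ cancellation explicit, but as written the step "cut off near walls, $e^{-\nu_t}$ small there" does not justify the smallness of the integral, and you should view Lemma~\ref{djsurDc} as the missing ingredient.
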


\subsection{Proof of Proposition~\ref{limiteqn}}  

Let us first consider 
\[
\int_{\mathfrak{a}^+}\frac{\partial u_t}{\partial \xi_t}e^{-\nu_t}dx.
\]
Since 
\[
e^{-\nu_t} = 
\prod_{\alpha\in \Phi^+} \left<\alpha,\nabla u_t\right>^2 \mathrm{MA}_{\mathbb{R}}(u_t),
\]
Proposition~\ref{barDH} implies 
\[
\int_{\mathfrak{a}^+}\frac{\partial u_t}{\partial \xi_t}e^{-\nu_t}dx =  \left< \xi_t , \mathrm{bar}_{DH}(2P^+) \right>V.
\]
In particular, the limit as $t_i \rightarrow t_{\infty}$ is 
\[ 
\left< \xi_{\infty} , \mathrm{bar}_{DH}(2P^+) \right>V.
\]

For the other terms we need more work to compute the limits.
For simplicity, we will often omit the indices $i$ in $t_i$.
We will prove the two following propositions.

\begin{prop}
\label{limj}
We have 
\[
\lim_{t_i\rightarrow t_{\infty}} 
		\int_{\mathfrak{a}^+} \frac{\partial j}{\partial \xi_t} e^{-\nu_t} = 
			-\left<4\rho,\xi_{\infty}\right> V.
\]
\end{prop}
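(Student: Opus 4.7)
The plan is to decompose $\coth y = 1 + (\coth y - 1)$ so that the integral splits into an explicit main term and an error term that I will argue vanishes. Expanding
\[
\frac{\partial j}{\partial \xi_t}(x) = -2\sum_{\alpha \in \Phi^+}\left<\alpha,\xi_t\right>\coth\left<\alpha, x\right>
= -\left<4\rho,\xi_t\right> - 2\sum_{\alpha\in\Phi^+}\left<\alpha,\xi_t\right>(\coth\left<\alpha, x\right>-1),
\]
and integrating against $e^{-\nu_t}dx$, one obtains, using $\int e^{-\nu_t}dx = V$ (Proposition~\ref{propV}):
\[
\int_{\mathfrak{a}^+}\frac{\partial j}{\partial \xi_t}e^{-\nu_t}dx = -\left<4\rho,\xi_t\right>V - 2\sum_{\alpha\in\Phi^+}\left<\alpha,\xi_t\right>\int_{\mathfrak{a}^+}(\coth\left<\alpha, x\right>-1)e^{-\nu_t}dx.
\]
Since $\xi_t\to \xi_\infty$, the first term converges to the desired limit $-\left<4\rho,\xi_\infty\right>V$. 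So the heart of the proof is to show, for each $\alpha \in \Phi^+$, that
\[
E_\alpha(t) := \left<\alpha,\xi_t\right>\int_{\mathfrak{a}^+}(\coth\left<\alpha, x\right>-1)e^{-\nu_t}dx \xrightarrow[t_i\to t_\infty]{} 0.
\]

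I would handle this by splitting into two cases based on whether $\left<\alpha,\xi_\infty\right>$ is positive or zero. In the first case, $\left<\alpha,\xi_t\right>$ stays bounded and I only need the integral to vanish. By Lemma~\ref{translate1} combined with $\left<\alpha,\xi_t\right>\to \left<\alpha,\xi_\infty\right>>0$, one has $\left<\alpha, x_t\right>\to\infty$. Using linear growth of $\nu_t$ (Proposition~\ref{linearGrowth}), $e^{-\nu_t}dx$ concentrates in $B_R(x_t)$, where $\coth\left<\alpha, x\right>-1$ is uniformly small (behaves like $e^{-2\left<\alpha, x_t\right>}$), while on the complement $e^{-\nu_t}\leq Ce^{-\kappa|x-x_t|}$ makes the tail contribution small. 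A standard splitting argument then gives $\int(\coth\left<\alpha, x\right>-1)e^{-\nu_t}dx\to 0$.

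In the second case $\left<\alpha,\xi_\infty\right>=0$, the factor $\left<\alpha,\xi_t\right>$ already tends to zero, so it is enough to bound the integral uniformly in $t$. The key identity
\[
(\coth y - 1)\sinh^2 y = \tfrac{1}{2}(1 - e^{-2y}) \leq \tfrac{1}{2}
\]
yields the pointwise estimate
\[
(\coth\left<\alpha, x\right>-1)e^{-\nu_t(x)} \leq \tfrac{1}{2}\,e^{-u_t(x)}\prod_{\beta\neq \alpha}\sinh^2\left<\beta, x\right>,
\]
which removes the singularity of $\coth$ at the $\alpha$-wall. Then I would split $\{\left<\alpha, x\right>\geq c\}$ from $\{\left<\alpha, x\right><c\}$ for a small $c>0$. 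On the first set $1/\sinh^2\left<\alpha, x\right>\leq 1/\sinh^2 c$, so the contribution is bounded by $V/(2\sinh^2 c)$. On the second set I invoke Proposition~\ref{technical}, extending it to general positive roots via the observation that if $\alpha = \sum c_i\alpha_i^{\mathrm{simp}}$, then $\left<\alpha, x\right>$ small forces some $\left<\alpha_i^{\mathrm{simp}}, x\right>$ small: for any $M$, by choosing $c$ sufficiently small we obtain $\nu_t\geq m_t+M$ throughout that set, so $e^{-\nu_t}\leq Ce^{-M}$ uniformly. Combining this with the linear growth estimate and the fact (from Lemma~\ref{translate1}) that this set lies at distance $\geq (b_1\left<\alpha,\rho\right>-c)/|\alpha|$ from $x_t$, the second contribution is also bounded uniformly, completing the proof.

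The main obstacle is the uniform bound in Case 2: the pointwise singularity of $\coth$ near the $\alpha$-wall is harmless because $e^{-\nu_t}$ vanishes there, but obtaining a uniform-in-$t$ bound requires careful use of Proposition~\ref{technical} to quantify how fast $e^{-\nu_t}$ decays near the walls, combined with the linear growth of $\nu_t$ to control the tail.
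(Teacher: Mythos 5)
Your decomposition $\coth y = 1 + (\coth y - 1)$, giving
\[
\int_{\mathfrak{a}^+}\frac{\partial j}{\partial \xi_t}e^{-\nu_t}dx = -\left<4\rho,\xi_t\right>V - 2\sum_{\alpha\in\Phi^+}\left<\alpha,\xi_t\right>\int_{\mathfrak{a}^+}(\coth\left<\alpha, x\right>-1)e^{-\nu_t}dx,
\]
is a genuinely different route from the paper, and it is attractive because it isolates the main term cleanly. However, there is a real gap in your treatment of the error integrals, and it is precisely the gap you flagged as the main obstacle: the tools you invoke do not actually control the $\coth$ singularity at the $\alpha$-wall. Proposition~\ref{technical} gives $\nu_t \geq m_t + M$ on $\{\left<\alpha,x\right><c\}$, hence $e^{-\nu_t}\leq Ce^{-M}$ there, but this is a \emph{constant} bound that does not vanish at the wall, while $\coth\left<\alpha,x\right>-1$ behaves like $1/\left<\alpha,x\right>$ as $\left<\alpha,x\right>\to 0$, which is not integrable transversally to the wall. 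The linear-growth estimate $\nu_t \geq \kappa|x-x_t|-C$ has the same defect: it decays in $|x-x_t|$ but does not vanish at the wall. So the bound
\[
\int_{\{\left<\alpha,x\right><c\}}(\coth\left<\alpha,x\right>-1)e^{-\nu_t}\,dx \leq Ce^{-M}\int_{\{\left<\alpha,x\right><c\}}(\coth\left<\alpha,x\right>-1)\,dx
\]
that your argument implicitly produces has an infinite right-hand side, and "combining with linear growth and the distance to $x_t$" does not fix this, since neither ingredient introduces the needed wall-vanishing. The same gap is present in the tail estimate of your Case 1, where the complement $\mathfrak{a}^+\setminus B_R(x_t)$ contains points arbitrarily close to the wall.

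To close the gap you would need a \emph{quantitative} version of Proposition~\ref{technical}, namely that near the $\alpha$-wall one has $\nu_t(x) \geq m_t + C' - 2\ln\sinh\left<\alpha,x\right>$, i.e. $e^{-\nu_t}\leq C''\sinh^2\left<\alpha,x\right>$, uniformly in $t$; this is in fact extractable from the integrated $\coth$ bound in the proof of Proposition~\ref{technical}, but it is not what the stated proposition provides, so you would need to prove it. Combined with the identity $(\coth y-1)\sinh^2 y = \tfrac{1}{2}(1-e^{-2y})\leq\tfrac{1}{2}$ that you already invoke (note: the pointwise inequality should read $\leq\tfrac12 e^{-w_t(x)}\prod_{\beta\neq\alpha}\sinh^2\left<\beta,x\right>$, with $w_t$, not $u_t$), this would give a uniform bound on the wall region and then both of your cases go through. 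The paper avoids this difficulty entirely: rather than integrating over all of $\mathfrak{a}^+$ and fighting the wall singularity, it restricts to a domain $D(\theta)=B(x_t,\delta)\cap(b\rho+\mathfrak{a}^+)$ that is bounded away from the walls, controls the contribution of $\mathfrak{a}^+\setminus D$ by combining the vanishing integral of Proposition~\ref{zeroInt} with the divergence theorem on $D$ (Lemma~\ref{djsurDc}), and then estimates $\bigl|\tfrac{\partial j}{\partial\xi_t}(x)+\left<4\rho,\xi_t\right>\bigr| = O(1/|x_t|)$ on $D$ by a convexity argument for $j$. That approach trades your explicit $\coth$ expansion for the integration-by-parts trick and never has to quantify the wall decay.
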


\begin{prop}
\label{limu}
We have 
\[
\lim_{t_i\rightarrow t_{\infty}} \int_{\mathfrak{a}^+} 
	\frac{\partial u_{\mathrm{ref}}}{\partial \xi_t} e^{-\nu_t} 
= v(\xi_{\infty}) V.
\]
\end{prop}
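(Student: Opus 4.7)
The plan is to show that the measure $e^{-\nu_t}\,dx$ concentrates enough around $x_t$ that the test function $\frac{\partial u_{\mathrm{ref}}}{\partial \xi_t}$, whose asymptotic value along rays in direction $\xi_\infty$ is $v(\xi_\infty)$, sees only that asymptotic value in the limit. Using $V = \int_{\mathfrak{a}^+} e^{-\nu_t}\,dx$ from Proposition~\ref{propV} and the continuity of the support function $v$, the statement reduces to
\[
I_t:=\int_{\mathfrak{a}^+} \bigl(v(\xi_t) - \langle \nabla u_{\mathrm{ref}}(x), \xi_t\rangle\bigr) e^{-\nu_t(x)}\,dx \longrightarrow 0,
\]
the integrand being non-negative because $\nabla u_{\mathrm{ref}}(x)\in 2P$ by Proposition~\ref{Potentials}, and bounded by $2d$.

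I split $I_t$ over $B(x_t,R)$ and its complement. Outside the ball, the linear growth estimate $\nu_t(x)\geq m_t + \kappa|x-x_t| - 1$ of Proposition~\ref{linearGrowth}, combined with the uniform bound on $|m_t|$, gives
\[
\int_{\mathfrak{a}^+\setminus B(x_t,R)} e^{-\nu_t}\,dx \;\leq\; C\int_{R}^{\infty} s^{r-1}\,e^{-\kappa s}\,ds,
\]
where $r$ is the rank of $G$. The right-hand side is independent of $t$ and tends to zero as $R\to\infty$, so the tail contribution to $I_t$ is negligible uniformly in $t$.

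The main step is to show that on the ball, $v(\xi_t) - \langle \nabla u_{\mathrm{ref}}(x),\xi_t\rangle \to 0$ uniformly for $x\in B(x_t,R)$ as $t_i\to t_\infty$. The key convexity input is: for any sequence $z_t = x_t+y_t$ with $|y_t|\leq R$, one has $|z_t|\to\infty$ and $z_t/|z_t|\to \xi_\infty$; evaluating the convexity inequality at the origin and using the lower bound $u_{\mathrm{ref}}(z)\geq v(z)+C_1$ of Proposition~\ref{Potentials} gives
\[
\langle \nabla u_{\mathrm{ref}}(z_t), z_t\rangle \;\geq\; u_{\mathrm{ref}}(z_t)-u_{\mathrm{ref}}(0)\;\geq\; v(z_t) + C_1 - u_{\mathrm{ref}}(0).
\]
Dividing by $|z_t|$ and invoking positive homogeneity and continuity of $v$, any accumulation point $p$ of $\nabla u_{\mathrm{ref}}(z_t)$ in the compact set $2P$ satisfies $\langle p,\xi_\infty\rangle \geq v(\xi_\infty)$, hence equality, so $p$ lies on the face $F_{\xi_\infty}:=\{p\in 2P : \langle p,\xi_\infty\rangle = v(\xi_\infty)\}$. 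This forces $\nabla u_{\mathrm{ref}}(B(x_t,R))\to F_{\xi_\infty}$ in Hausdorff distance as $t_i\to t_\infty$. Decomposing
\[
v(\xi_t) - \langle \nabla u_{\mathrm{ref}}(x),\xi_t\rangle = \bigl(v(\xi_t)-v(\xi_\infty)\bigr)+\bigl(v(\xi_\infty)-\langle \nabla u_{\mathrm{ref}}(x),\xi_\infty\rangle\bigr)+\langle \nabla u_{\mathrm{ref}}(x),\xi_\infty-\xi_t\rangle,
\]
the first and third terms are bounded by $|v(\xi_t)-v(\xi_\infty)|$ and $d\|\xi_t-\xi_\infty\|$ respectively, and the second goes to zero uniformly by the face convergence just established.

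The main obstacle is precisely this uniform convergence on the ball: without a uniform Hessian bound for $u_{\mathrm{ref}}$, there is no equicontinuity available to promote pointwise convergence into uniform convergence by naive means. The argument circumvents this by extracting the direction of escape purely from the sublinear comparison $u_{\mathrm{ref}}=v+O(1)$ together with the compactness of $2P$, which suffices to pin any accumulation point into the correct face.
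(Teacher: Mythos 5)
Your proposal is correct, and it follows the same overall strategy as the paper (split the integral over a ball around $x_t$ and its complement; use linear growth of $\nu_t$ on the tail; use convexity of $u_{\mathrm{ref}}$ and the asymptotic $u_{\mathrm{ref}}=v+O(1)$ on the ball). What differs is the technical execution of the ball estimate. The paper estimates the directional derivative $\frac{\partial u_{\mathrm{ref}}}{\partial \xi_t}(x)$ from below by the secant slope from $x-x_t$ to $x$, then uses $u_{\mathrm{ref}}\geq v+C_1$, boundedness of $u_{\mathrm{ref}}$ on $B(0,\delta)$, positive homogeneity and Lipschitz continuity of $v$ to deduce the explicit rate $\left|\frac{\partial u_{\mathrm{ref}}}{\partial \xi_t}(x)-v(\xi_t)\right|\leq C/|x_t|$, uniformly on the ball. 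You instead test convexity against the radial direction from the origin, divide by $|z_t|$, and run a compactness/contradiction argument inside $2P$ to pin all accumulation points of $\nabla u_{\mathrm{ref}}(z_t)$ onto the supporting face $F_{\xi_\infty}$; this gives the needed uniform smallness of $v(\xi_\infty)-\langle\nabla u_{\mathrm{ref}}(x),\xi_\infty\rangle$ on the ball, with no explicit rate. Both approaches are valid and rest on exactly the same inputs (Propositions~\ref{Potentials}, \ref{propV}, \ref{linearGrowth}); yours is softer but makes the geometric mechanism — gradients escaping to the supporting face of $2P$ in the direction $\xi_\infty$ — more transparent, while the paper's version is quantitative and avoids the subsequence-extraction step. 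One small overstatement: you claim Hausdorff convergence of $\nabla u_{\mathrm{ref}}(B(x_t,R))$ to $F_{\xi_\infty}$, but your argument only establishes the one-sided inclusion $\nabla u_{\mathrm{ref}}(B(x_t,R))\subset F_{\xi_\infty}+B(0,\epsilon)$ eventually; this is all that is needed, so the proof stands.
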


Let us first find a domain $D(\epsilon)$ of the form 
$B(x_t,\delta)\cap (b\rho+\mathfrak{a}^+)$
where $e^{-\nu_t}dx$ puts all the mass up to $2\epsilon >0$.
When we write $B(x_t,\delta)$, we in general mean $B(x_t,\delta)\cap \mathfrak{a}^+$.

\begin{lem}
\label{Ball}
For any $\epsilon>0$, there exists a constant $\delta=\delta(\epsilon)$ independent of $t$ such that 
\[
\int_{\mathfrak{a}^+ \setminus B(x_t,\delta)}e^{-\nu_t}dx < \epsilon
\]
and 
\[
e^{-\kappa \delta+C}\sigma_r\delta^{r-1}< \epsilon,
\]
where $\sigma_r$ is the area of a sphere of radius 1 in $\mathbb{R}^r$.
\end{lem}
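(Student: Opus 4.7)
The plan is to use the uniform linear growth estimate for $\nu_t$ established in Proposition~\ref{linearGrowth}, which gives $\nu_t(x) \geq \kappa |x - x_t| - C$ with $\kappa, C$ independent of $t$. This immediately yields the pointwise bound $e^{-\nu_t(x)} \leq e^C e^{-\kappa |x - x_t|}$, which is what reduces the claim to a purely elementary tail estimate.

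For the first inequality, I would translate by $x_t$ and enlarge the domain from $\mathfrak{a}^+ \setminus B(x_t,\delta)$ to the full complement of the ball of radius $\delta$ in $\mathfrak{a}$, then pass to polar coordinates:
\[
\int_{\mathfrak{a}^+ \setminus B(x_t,\delta)} e^{-\nu_t}\,dx \;\leq\; e^C \int_{|y|\geq \delta} e^{-\kappa |y|}\,dy \;=\; e^C \sigma_r \int_\delta^\infty e^{-\kappa s} s^{r-1}\,ds.
\]
The right-hand side depends only on $\delta$ (not on $t$) and tends to $0$ as $\delta \to \infty$ since $e^{-\kappa s} s^{r-1}$ is integrable at infinity, so one can choose $\delta_1(\epsilon)$ making this $<\epsilon$.

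For the second inequality, the function $\delta \mapsto e^{-\kappa \delta + C} \sigma_r \delta^{r-1}$ tends to $0$ as $\delta \to \infty$ (exponential beats polynomial), so there exists $\delta_2(\epsilon)$ above which it is $<\epsilon$. Taking $\delta(\epsilon) := \max(\delta_1(\epsilon), \delta_2(\epsilon))$ gives both inequalities simultaneously, and $\delta$ is independent of $t$ because both $\kappa$ and $C$ are.

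There is essentially no obstacle here: the lemma is a packaged tail estimate designed to extract a controlled sublevel region on which one can later localize the computations of Propositions~\ref{limj} and~\ref{limu}. The only subtlety is to notice that the bound on $e^{-\nu_t}$ does not depend on $x_t$ after the change of variables $y = x - x_t$, so the uniform constant $\delta(\epsilon)$ is genuinely independent of $t$.
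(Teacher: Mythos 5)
Your proposal is correct and follows essentially the same route as the paper: both invoke the uniform linear growth estimate from Proposition~\ref{linearGrowth}, dominate $e^{-\nu_t}$ by $e^{C-\kappa|x-x_t|}$, translate to make the bound independent of $x_t$, and then observe that both the tail integral and the surface term tend to $0$ as $\delta\to\infty$. The only difference is cosmetic — you spell out the polar-coordinate computation and the $\max(\delta_1,\delta_2)$ step that the paper leaves implicit.
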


\begin{proof}
Recall from Proposition~\ref{linearGrowth} that $\nu_t(x)\geq \kappa|x-x_t|-C$, for some $\kappa>0, C$ independent of $t$.  
Observe that the function $e^{-\kappa|x-x_t|+C}$ is well defined on $\mathfrak{a}$, positive 
and integrable.
So there exists a $\delta>0$ such that 
\[
\int_{\mathfrak{a} \setminus B(x_t,\delta)}e^{-\kappa|x-x_t|+C}dx< \epsilon.
\]
But then we also have 
\[
\int_{\mathfrak{a}^+ \setminus B(x_t,\delta)}e^{-\nu_t}dx \leq 
\int_{\mathfrak{a}^+ \setminus B(x_t,\delta)}e^{-\kappa|x-x_t|+C}< \epsilon.
\]

Of course, since $e^{-\kappa y+C}$ decreases exponentially with respect to $y$,
we can increase $\delta$ so as to 
have the second condition:
\[
e^{-\kappa \delta+C}\sigma_r\delta^{r-1}< \epsilon.
\qedhere
\]
\end{proof}

\begin{lem}
\label{Domain}
For any $\epsilon >0$, let $\delta=\delta(\epsilon)$ be given by Lemma~\ref{Ball}. There exists a 
constant $b=b(\epsilon)>0$ such that
if we denote by $D=D(\epsilon)$ the domain $B(x_t,\delta)\cap (b\rho+\mathfrak{a}^+)$, 
we have 
\[
\int_{B(x_t, \delta) \setminus D}e^{-\nu_t}dx<\epsilon,
\]
and
\[
\int_{\partial D}e^{-\nu_t}d\sigma < \epsilon, 
\]
where $d\sigma$ is the area measure of $\partial D$, which is piecewise smooth.
\end{lem}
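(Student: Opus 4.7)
The plan is to exploit Proposition~\ref{technical} to force $e^{-\nu_t}$ to be uniformly small on any neighborhood of the Weyl walls, combined with the uniform bound $|m_t|\leq C$ from Proposition~\ref{linearGrowth} and the exponential decay estimate $\nu_t(x)\geq \kappa|x-x_t|-C$ from the same proposition. The constant $\delta=\delta(\epsilon)$ being already fixed by Lemma~\ref{Ball}, the only freedom we have is in the choice of $b$, which we will determine last.

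First I would handle the volume integral. The set $B(x_t,\delta)\setminus D$ consists of points $x\in B(x_t,\delta)$ satisfying $\langle\alpha,x\rangle < b\langle\alpha,\rho\rangle$ for some simple root $\alpha$. By Proposition~\ref{technical}, for any prescribed $M>0$ there exists $b=b(M)>0$, independent of $t$, such that $\nu_t(x)\geq m_t+M$ on this region, and hence $e^{-\nu_t(x)}\leq e^{C-M}$ uniformly in $t$. Since $\mathrm{Vol}(B(x_t,\delta)\setminus D)\leq \omega_r\delta(\epsilon)^r$ for a dimensional constant $\omega_r$, choosing $M=M(\epsilon)$ large enough so that $e^{C-M}\omega_r\delta(\epsilon)^r<\epsilon$ makes the first integral less than $\epsilon$.

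Next I would split the boundary integral according to $\partial D\subset \partial B(x_t,\delta)\cup \partial(b\rho+\mathfrak{a}^+)$. On the spherical piece, the estimate $\nu_t\geq \kappa|x-x_t|-C$ gives $e^{-\nu_t}\leq e^{-\kappa\delta+C}$, and since this piece has area at most $\sigma_r\delta^{r-1}$, the second condition in Lemma~\ref{Ball} (applied to $\epsilon/2$) bounds its contribution by $\epsilon/2$. On the wall piece, Proposition~\ref{technical} again yields $\nu_t\geq m_t+M$, hence $e^{-\nu_t}\leq e^{C-M}$, while the area of $\partial(b\rho+\mathfrak{a}^+)\cap B(x_t,\delta)$ is bounded by a constant $C(\delta,r)$ depending only on $\delta$ and the (finite) number of simple roots; taking $M$ still larger if necessary makes this contribution less than $\epsilon/2$.

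The only mild obstacle is coordinating these three bounds by a single choice of $M$, hence of $b$: this is fine because all constants entering the estimates ($C$, $\kappa$, $\omega_r$, $\sigma_r$, and $C(\delta(\epsilon),r)$) are independent of $t$, so the resulting $b=b(\epsilon)$ genuinely depends on $\epsilon$ alone, as required.
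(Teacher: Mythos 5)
Your strategy coincides with the paper's: use Proposition~\ref{technical} (which is now effective because $|m_t|$ is uniformly bounded) to make $e^{-\nu_t}$ small near the Weyl walls, and use the linear growth estimate of Proposition~\ref{linearGrowth} together with the second condition of Lemma~\ref{Ball} to handle the spherical part of $\partial D$. The volume estimate over $B(x_t,\delta)\setminus D$ is handled identically. So the two proofs are essentially the same.

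One bookkeeping slip worth noting: you declare that $\delta=\delta(\epsilon)$ is already fixed by Lemma~\ref{Ball}, yet on the spherical piece of the boundary you invoke ``the second condition in Lemma~\ref{Ball} applied to $\epsilon/2$,'' which would require $\delta=\delta(\epsilon/2)$. With $\delta=\delta(\epsilon)$ the sphere piece only gives a bound $<\epsilon$, and adding the wall contribution yields $<\tfrac{3}{2}\epsilon$ rather than $<\epsilon$. The paper sidesteps this by not splitting $\epsilon$ at all: it bounds $e^{-\nu_t}<\epsilon/(\sigma_r\delta^{r-1})$ uniformly on $\partial D$ (from Lemma~\ref{Ball}'s second condition on the sphere, and from the choice of $b$ on the wall), and then uses the fact that $\partial D$, being the boundary of a convex body contained in $B(x_t,\delta)$, has area at most $\sigma_r\delta^{r-1}$. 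This single observation replaces your two-piece decomposition of the boundary area and makes the constants land on the nose. The fix to your version is trivial (rescale $\epsilon$), but the convexity argument is the cleaner way to close it.
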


\begin{proof}
Here we want to use Proposition~\ref{technical}.
Now that we know that $m_t$ is uniformly bounded, we can choose $M$ and the corresponding 
$b$ so that: 
\[ 
\forall x \in \mathfrak{a}^+ \setminus (b\rho + \mathfrak{a}^+), \quad
e^{-\nu_t(x)}  < \max\left( \frac{\epsilon}{\sigma_r\delta^{r-1}}, \frac{\epsilon}{\delta^r\omega_r} \right)
\]
where $\omega_r$ is the volume of the ball of radius 1 in $\mathbb{R}^r$.

Let us prove that $e^{-\nu_t}< \epsilon /(\sigma_r\delta^{r-1})$ on $\partial D$.
A point $x\in \partial D$ is either on the sphere of radius $\delta$ centered at $x_t$, 
or on $\partial (b\rho+\mathfrak{a}^+)$.
In the first case, we have, by Proposition~\ref{linearGrowth},
\begin{align*}
e^{-\nu_t(x)} & \leq e^{-\kappa |x-x_t|+C} \\
& \leq e^{-\kappa \delta+C} \\
& <\frac{\epsilon}{\sigma_r\delta^{r-1}}
\end{align*}
by the second consequence of Lemma~\ref{Ball}.
In the second case, $x\in \partial (b\rho + \mathfrak{a}^+)$, so by the choice 
of $b$ above, using the first term in the maximum, we have 
\[
e^{-\nu_t(x)}<\frac{\epsilon}{\sigma_r\delta^{r-1}}.
\]

Obviously the volume of $\partial D$ is $\leq \sigma_r\delta^{r-1}$,
so 
\begin{align*}
\int_{\partial D}e^{-\nu_t}d\sigma
& <\int_{\partial D} \frac{\epsilon}{\sigma_r\delta^{r-1}}d\sigma \\
&< \epsilon.
\end{align*}

For the other conclusion, we use the fact that 
$e^{-\nu_t(x)} < \epsilon /(\delta^r\omega_r)$ on 
$B(x_t, \delta) \setminus D \subset \mathfrak{a}^+ \setminus (b\rho + \mathfrak{a}^+)$, 
which implies that
\begin{align*}
\int_{B(x_t, \delta) \setminus D}e^{-\nu_t}dx & < \int_{B(x_t, \delta) \setminus D}\frac{\epsilon}{\delta^r\omega_r}dx < \epsilon
\end{align*}
using the fact that the volume of $B(x_t, \delta) \setminus D$ is $\leq \delta^r\omega_r$.
\end{proof}

\begin{lem}
\label{djsurDc}
Let $\epsilon >0$ and $D=D(\epsilon)$ be the domain given by Lemma~\ref{Domain}. We have 
\[
\left| \int_{\mathfrak{a}^+ \setminus D} \frac{\partial j}{\partial \xi_t} e^{-\nu_t}  \right| 
< (2d+1)\epsilon.
\]
\end{lem}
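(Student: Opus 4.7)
The plan is to split the integrand via $\tfrac{\partial j}{\partial \xi_t} = \tfrac{\partial \nu_t}{\partial \xi_t} - \tfrac{\partial w_t}{\partial \xi_t}$ and estimate the two resulting integrals separately, so that the factor $(2d+1)$ appears as $1+2d$.

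For the $\partial w_t/\partial \xi_t$ term, the bound is straightforward: since $w_t=tu_t+(1-t)u_{\mathrm{ref}}$ is a convex combination of convex potentials, Proposition~\ref{Potentials} gives $|\nabla u_t|\leq d$ and $|\nabla u_{\mathrm{ref}}|\leq d$, hence $|\partial w_t/\partial \xi_t|\leq d$ (recall $|\xi_t|=1$). Combining Lemma~\ref{Ball} and Lemma~\ref{Domain} gives $\int_{\mathfrak{a}^+\setminus D} e^{-\nu_t}\,dx < 2\epsilon$, since $\mathfrak{a}^+\setminus D \subset (\mathfrak{a}^+\setminus B(x_t,\delta))\cup (B(x_t,\delta)\setminus D)$. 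Thus the contribution of this term is at most $2d\epsilon$.

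For the $\partial \nu_t/\partial \xi_t$ term, the key observation is that $w_t$ is itself the convex potential of a smooth positively curved metric on $-K_X$ (namely the tensor product $h_t^{\otimes t}\otimes h_{\mathrm{ref}}^{\otimes(1-t)}$ of curvature $t\omega_t+(1-t)\omega_{\mathrm{ref}}>0$), so Proposition~\ref{zeroInt} applies and yields $\int_{\mathfrak{a}^+}\tfrac{\partial \nu_t}{\partial \xi_t}e^{-\nu_t}\,dx=0$. Therefore
\[
\int_{\mathfrak{a}^+\setminus D}\frac{\partial \nu_t}{\partial \xi_t}e^{-\nu_t}\,dx \;=\; -\int_{D}\frac{\partial \nu_t}{\partial \xi_t}e^{-\nu_t}\,dx.
\]
Since $\xi_t$ is a constant vector, the integrand on the right equals $\mathrm{div}(e^{-\nu_t}\xi_t)$, so by the divergence theorem on the piecewise smooth domain $D$ this last integral becomes $\int_{\partial D}e^{-\nu_t}\langle\xi_t,\mu\rangle\,d\sigma$, whose absolute value is at most $\int_{\partial D}e^{-\nu_t}\,d\sigma<\epsilon$ by Lemma~\ref{Domain}.

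Adding the two bounds gives the desired estimate $(2d+1)\epsilon$. The only subtlety worth checking is the applicability of the divergence theorem on $D=B(x_t,\delta)\cap(b\rho+\mathfrak{a}^+)$, which has piecewise smooth boundary (a spherical cap glued with portions of the affine walls), and this is exactly the regularity that Lemma~\ref{Domain} already invokes when bounding $\int_{\partial D}e^{-\nu_t}\,d\sigma$, so no additional work is required.
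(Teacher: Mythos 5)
Your proof is correct and takes essentially the same approach as the paper: the same decomposition $\partial j/\partial \xi_t = \partial \nu_t/\partial \xi_t - \partial w_t/\partial \xi_t$, the same appeal to Proposition~\ref{zeroInt} applied to $w_t$, the same divergence-theorem argument on $D$, and the same $d$-bound on $\partial w_t/\partial \xi_t$. If anything you are slightly more careful than the paper in two spots: you explicitly justify that $w_t$ is itself a convex potential of a positive metric (so that Proposition~\ref{zeroInt} applies), and you correctly invoke both Lemma~\ref{Ball} and Lemma~\ref{Domain} to get $\int_{\mathfrak{a}^+\setminus D}e^{-\nu_t}\,dx<2\epsilon$, whereas the paper attributes this to Lemma~\ref{Domain} alone.
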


\begin{proof}
Let us write:
\begin{align*}
\int_{\mathfrak{a}^+\setminus D}\frac{\partial j}{\partial \xi_t}e^{-\nu_t}dx
& =\int_{\mathfrak{a}^+\setminus D}\frac{\partial (\nu_t-w_t)}{\partial \xi_t}e^{-\nu_t}dx\\
& = \int_{\mathfrak{a}^+}\frac{\partial \nu_t}{\partial \xi_t}e^{-\nu_t}dx
	-\int_{D}\frac{\partial \nu_t}{\partial \xi_t}e^{-\nu_t}dx
	-\int_{\mathfrak{a}^+\setminus D}\frac{\partial w_t}{\partial \xi_t}e^{-\nu_t}dx
\end{align*}
The first of these three integrals is zero by Proposition~\ref{zeroInt}.

For the third term, we have 
\begin{align*}
\int_{\mathfrak{a}^+\setminus D}\left|\frac{\partial w_t}{\partial \xi_t}\right|e^{-\nu_t}dx 
& \leq d \int_{\mathfrak{a}^+\setminus D}e^{-\nu_t}dx\\
& < 2d\epsilon
\end{align*}
by Lemma~\ref{Domain}.

It remains to deal with the second integral. We apply the divergence theorem on $D$ 
to the function $e^{-\nu_t}$:
\[
\int_{D}\frac{\partial \nu_t}{\partial \xi_t}e^{-\nu_t}dx = 
-\int_{D}\frac{\partial e^{-\nu_t}}{\partial \xi_t}dx
= \int_{\partial D} e^{-\nu_t} \left< \xi_t, n \right> d\sigma 
\]
where $n$ is the exterior normal to $D$ and $d\sigma$ the area measure.

We conclude using Lemma~\ref{Domain} that 
\[
\left| \int_{D}\frac{\partial \nu_t}{\partial \xi_t}e^{-\nu_t}dx\right| 
\leq \int_{\partial D} e^{-\nu_t} d\sigma
<\epsilon.
\]

Putting everything together, we see that 
\[
\left| \int_{\mathfrak{a}^+\setminus D}\frac{\partial j}{\partial \xi_t}e^{-\nu_t}dx \right| 
< (2d+1)\epsilon 
\qedhere
\]
\end{proof}

\begin{proof}[Proof of Proposition~\ref{limj}]
Let $\epsilon >0$. Set $\theta = \epsilon /(3(2d+1+8|\rho|))$ and let $D:=D(\theta)$.
Write
\begin{align*}
\left|  \int_{\mathfrak{a}^+} \frac{\partial j}{\partial \xi_t} e^{-\nu_t} +\left<4\rho,\xi_{\infty}\right>V   \right| 
&  \leq \left| \int_{\mathfrak{a}^+ \setminus D} \frac{\partial j}{\partial \xi_t} e^{-\nu_t}   \right| 
          + \left|  \int_{D} \frac{\partial j}{\partial \xi_t} e^{-\nu_t} +\left<4\rho,\xi_{\infty}\right>V    \right|  \\
& \leq (2d+1)\theta + \left|  \int_{D} \frac{\partial j}{\partial \xi_t} e^{-\nu_t} +\left<4\rho,\xi_{\infty}\right>V    \right|
\end{align*}
by Lemma~\ref{djsurDc}.

Then we have 
\begin{align*}
\left|  \int_{D} \frac{\partial j}{\partial \xi_t} e^{-\nu_t} +\left<4\rho,\xi_{\infty}\right>V    \right| 
& \leq  \left|  \int_{D} \frac{\partial j}{\partial \xi_t} e^{-\nu_t} +\left<4\rho,\xi_t\right>V    \right| 
		+\left|  \left<4\rho,\xi_{\infty}-\xi_t\right>V  \right|
\end{align*}
The second term tends to zero so there exists an $i_0$ such that for all $i\geq i_0$, 
\[
\left|  \left<4\rho,\xi_{\infty}-\xi_t\right>V  \right| \leq \frac{\epsilon}{3}.
\]

We now deal with the second term:
\begin{align*}
\left|  \int_{D} \frac{\partial j}{\partial \xi_t} e^{-\nu_t} +\left<4\rho,\xi_t\right>V    \right| & 
\leq \left| \int_{D} \left(\frac{\partial j}{\partial \xi_t} + \left<4\rho,\xi_t\right>\right)e^{-\nu_t} \right| 
+ \left| \int_{\mathfrak{a}^+ \setminus D} \left<4\rho,\xi_t\right>e^{-\nu_t} \right| \\
& \leq \left| \int_{D} \left(\frac{\partial j}{\partial \xi_t} + \left<4\rho,\xi_t\right>\right)e^{-\nu_t} \right| 
+ \left|  \left<4\rho,\xi_t\right> \int_{\mathfrak{a}^+ \setminus D}e^{-\nu_t} \right| \\
& \leq \left| \int_{D} \left(\frac{\partial j}{\partial \xi_t} + \left<4\rho,\xi_t\right>\right)e^{-\nu_t} \right| 
+ |4\rho| \left|  \int_{\mathfrak{a}^+ \setminus D}e^{-\nu_t} \right| \\
\intertext{and by construction of $D$ we deduce:}
& \leq \left| \int_{D} \left(\frac{\partial j}{\partial \xi_t} + \left<4\rho,\xi_t\right>\right)e^{-\nu_t} \right| 
+ |4\rho|\cdot 2\theta
\end{align*}

 We consider now the quantity
\[ 
\frac{\partial j}{\partial \xi_t}(x) + \left<4\rho,\xi_t\right>
\]
for $x\in D$.
It is negative by Lemma~\ref{j4rho} since $\xi_t \in \overline{\mathfrak{a}^+}$.

Recall that $D(\theta) \subset b(\theta)\rho + \mathfrak{a}^+$ for some $b(\theta) >0$, 
and more precisely that 
$D(\theta) = B(x_t, \delta(\theta))\cap  (b(\theta)\rho + \mathfrak{a}^+)$.
Choose $b_0>0$ such that 
$B(b_0\rho,   \delta(\theta))\subset  b(\theta)\rho + \mathfrak{a}^+$.

We can write 
\begin{align*}
\xi_t & = \frac{x_t}{|x_t|} \\
& = \frac{x_t-b_0\rho}{|x_t|} + \frac{b_0\rho}{|x_t|}\\
&= \frac{ |x_t-b_0\rho| }{|x_t|} \xi_1 +\frac{|b_0\rho|}{|x_t|}\xi_2
\end{align*}
where $\xi_1 = (x_t-b_0\rho) / |x_t-b_0\rho|$ and 
$\xi_2= b_0\rho / |b_0\rho|$.

It gives 
\[
\frac{\partial}{\partial \xi_t} = 
	\frac{ |x_t-b_0\rho| }{|x_t|} \frac{\partial}{\partial \xi_1} 
		+\frac{|b_0\rho|}{|x_t|}\frac{\partial}{\partial \xi_2}.
\]

Let $x= x_t+y \in D$, we consider the restriction of $j$ to the line starting 
from $b_0\rho +y$ and of direction $\xi_1$, which contains $x$. 
By convexity, we have 
\[
\frac{\partial j}{\partial \xi_1}(x) \geq \frac{j(x)-j(y+b_0\rho)}{|x_t-b_0\rho|}.
\]

Recall from Lemma~\ref{Dominate}
that 
\[
j(x)\geq -2\sum_{\alpha \in \Phi^+}\left<\alpha , x\right>+C= -\left<4\rho,x\right>+C
\]
on $\mathfrak{a}^+$, for some constant $C$. 

Applying this gives 
\[
\frac{\partial j}{\partial \xi_1}(x) 
	\geq \frac{-\left<4\rho,x\right>+C-j(y+b_0\rho)}{|x_t-b_0\rho|}.
\]

Now going back to $\frac{\partial j}{\partial \xi_t}(x)$, we have 
\begin{align*}
\frac{\partial j}{\partial \xi_t}(x) 
& = \frac{ |x_t-b_0\rho| }{|x_t|} \frac{\partial j}{\partial \xi_1}(x) 
		+\frac{|b_0\rho|}{|x_t|}\frac{\partial j}{\partial \xi_2}(x) \\
& \geq \frac{ |x_t-b_0\rho| }{|x_t|}\frac{-\left<4\rho,x\right>
		+C-j(y+b_0\rho)}{|x_t-b_0\rho|} 
		+\frac{|b_0\rho|}{|x_t|}\frac{\partial j}{\partial \xi_2}(x) \\
 & \geq \frac{-\left<4\rho, x_t+y\right>+C-j(y+b_0\rho)}{|x_t|} 
		+\frac{|b_0\rho|}{|x_t|}\frac{\partial j}{\partial \xi_2}(x)
\end{align*}

so 

\[
0\geq \frac{\partial j}{\partial \xi_t}(x)+ \left<4\rho,\frac{x_t}{|x_t|}\right>  
	\geq \frac{-\left<4\rho,y\right>+C-j(y+b_0\rho) 
		+  |b_0\rho| \frac{\partial j}{\partial \xi_2}(x)    }{|x_t|}
\]

Now $y\in B(b_0\rho,   \delta(\theta))$ is bounded, $j$ is bounded on 
$b(\theta)\rho+\mathfrak{a}^+$ by Lemma~\ref{Dominate}, 
and the derivatives of $j$ are bounded 
on $b(\theta)\rho+\mathfrak{a}^+$ by Lemma~\ref{jderivatives}, so there is a (negative) constant $C'$ such that 
\[
0\geq \frac{\partial j}{\partial \xi_t}(x)+ \left<4\rho,\frac{x_t}{|x_t|}\right>  
\geq \frac{C'}{|x_t|}.
\]

Applying this to the sequence $t_i$, we find that there exists a $i_1$ such that for 
$i>i_1$, and unifomrly for $x\in D$,
\[
\left| \frac{\partial j}{\partial \xi_t}(x)+ \left<4\rho,\xi_t \right> \right| 
	\leq \frac{\epsilon}{3V}.
\]

Then for $i>i_1$, 
\begin{align*}
\left| \int_{D} \left(\frac{\partial j}{\partial \xi_t} 
	+ \left<4\rho,\xi_t\right>\right)e^{-\nu_t} \right| 
& \leq  \int_{D} \left|\frac{\partial j}{\partial \xi_t} + \left<4\rho,\xi_t\right>\right|e^{-\nu_t} \\
& \leq \frac{\epsilon}{3V} \int_{D}e^{-\nu_t} \\
& \leq \frac{\epsilon}{3}
\end{align*}

Gathering everything gives, for $i>\mathrm{max}(i_0,i_1)$,
\[
\left|  \int_{\mathfrak{a}^+} \frac{\partial j}{\partial \xi_t} e^{-\nu_t} 
	+\left<4\rho,\xi_{\infty}\right>V   \right|
\leq (2d+1)\theta + 8|\rho|\theta + \frac{\epsilon}{3} + \frac{\epsilon}{3} 
=\epsilon.
\qedhere
\]
\end{proof}

\begin{proof}[Proof of Proposition~\ref{limu}]
Let $\epsilon>0$. 
Set $\theta := \epsilon / 6d$ and let $\delta=\delta(\theta)$.
First, by Lemma~\ref{Ball}, we have 
\[
\left|\int_{\mathfrak{a}^+\setminus B(x_t,\delta)}
\frac{\partial u_{\mathrm{ref}}}{\partial \xi_t} e^{-\nu_t} \right| < d\theta.
\]

On $B(x_t,\delta)$ , we always have $\frac{\partial u_{\mathrm{ref}}}{\partial \xi_t}\leq v(\xi_t)$.
Now consider the ray starting from $x-x_t$ and going to $x$. By convexity, we have 
\begin{align*}
\frac{\partial u_{\mathrm{ref}}}{\partial \xi_t}(x)  & \geq \frac{u_{\mathrm{ref}}(x)-u_{\mathrm{ref}}(x-x_t)}{|x_t|}\\
& \geq \frac{v(x)+C}{|x_t|}\\
\intertext{for some constant $C$ independent of $x$ in $B(x_t,\delta)$, by 
Proposition~\ref{Potentials} and because $u_{\mathrm{ref}}$ is bounded on $B(0,\delta)$.
Then we can write }
\frac{\partial u_{\mathrm{ref}}}{\partial \xi_t}(x) & \geq v(\xi_t+\frac{x-x_t}{|x_t|})+\frac{C}{|x_t|} \\
& \geq v(\xi_t)+\frac{C'}{|x_t|}
\end{align*}
The last step holds because $v$ is Lipschitz.

For $i>i_0$ for some $i_0$, we thus have, for $x\in B(x_t,\delta)$,
\[
\left|\frac{\partial u_{\mathrm{ref}}}{\partial \xi_t}(x) - 
v(\xi_t)\right|<\frac{\epsilon}{3}  V.
\]
Integrating on the ball gives 
\[
\left|\int_{B(x_t,\delta)}\frac{\partial u_{\mathrm{ref}}}{\partial \xi_t} e^{-\nu_t}
	-\int_{B(x_t,\delta)} v(\xi_t)e^{-\nu_t}\right|<\frac{\epsilon}{3}.
\]

Applying Lemma~\ref{Ball} again gives 
\[
\left|\int_{B(x_t,\delta)} v(\xi_t)e^{-\nu_t}-\int_{\mathfrak{a}^+} v(\xi_t)e^{-\nu_t}\right|< d \theta,
\]
with
\[
\int_{\mathfrak{a}^+} v(\xi_t)e^{-\nu_t}=v(\xi_t)V.
\]

Finally, since $\xi_t$ converges to $\xi_{\infty}$, there exists $i_1$ such that for $i>i_1$, 
\[
|v(\xi_t)V-v(\xi_{\infty})V| < \frac{\epsilon}{3}.
\]

We have proved that for $i>i_0,i_1$, we have 
\[
\left|\int_{\mathfrak{a}^+} 	\frac{\partial u_{\mathrm{ref}}}{\partial \xi_t} e^{-\nu_t} 
   -v(\xi_{\infty})V\right|<2\frac{\epsilon}{3} + 2d \theta = \epsilon
\qedhere
\]
\end{proof}

This concludes the proof of Proposition~\ref{limiteqn}, by considering the limit 
at $t_{\infty}$ and dividing by $V$.

\subsection{Sufficient condition}

We can now prove that our condition is sufficient for the existence of a Kähler-Einstein 
metric and thus conclude the proof of Theorem~A.

\begin{thm}
\label{thm_sufficient}
If $\mathrm{bar}_{DH}(2P^+) \in 4\rho +\Xi$, then $X$ admits a 
Kähler-Einstein metric.
\end{thm}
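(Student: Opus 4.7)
The plan is to argue by contradiction: assume $X$ admits no Kähler-Einstein metric. Then the setup at the beginning of Section~\ref{sec_absence} produces a sequence $t_i\to t_\infty=R(X)\leq 1$ along which $|x_{t_i}|\to\infty$ and $\xi_{t_i}\to\xi_\infty\in\overline{\mathfrak{a}^+}$ with $|\xi_\infty|=1$. Proposition~\ref{limiteqn} then gives
\[
t_\infty\left<\mathrm{bar}_{DH}(2P^+)-4\rho,\xi_\infty\right> = (t_\infty-1)\bigl(v(\xi_\infty)-\left<4\rho,\xi_\infty\right>\bigr),
\]
and my aim is to show that under the hypothesis $\mathrm{bar}_{DH}(2P^+)\in 4\rho+\Xi$ this identity is incompatible with the existence of such a sequence, splitting the analysis according to whether $\xi_\infty$ lies in the toric direction $\mathfrak{a}_t$ or not.

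When $\xi_\infty\notin\mathfrak{a}_t$, I write $\mathrm{bar}_{DH}(2P^+)-4\rho=\sum c_i\alpha_i$ as a strictly positive combination of the simple roots, which is possible because $\Xi$ is the relative interior of the cone generated by $\Phi^+$. At least one pairing $\left<\alpha_i,\xi_\infty\right>$ is then strictly positive while all are nonnegative thanks to $\xi_\infty\in\overline{\mathfrak{a}^+}$, so the left-hand side of the identity is strictly positive (using $t_\infty\geq t_0>0$). The right-hand side is nonpositive since $t_\infty\leq 1$ and $v(\xi_\infty)-\left<4\rho,\xi_\infty\right>>0$, the latter from $4\rho\in\mathrm{Int}(2P)$ (cf.\ \cite[Remark 4.10]{DelLCT} and the proof of the properness of $\nu_t$). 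This yields the desired contradiction.

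When $\xi_\infty\in\mathfrak{a}_t$ the identity degenerates to $0=0$, so I plan to refine the argument by invoking Proposition~\ref{zeroInt} (extended to $w_t$ as in Section~\ref{sec_absence}) with the \emph{fixed} test vector $\xi=\xi_\infty\in\mathfrak{a}_t\subset\overline{\mathfrak{a}^+}$. Since $\partial j/\partial\xi_\infty=0$ by Lemma~\ref{j4rho}, and Proposition~\ref{barDH} gives $\int(\partial u_t/\partial\xi_\infty)e^{-\nu_t}\,dx=\left<\xi_\infty,\mathrm{bar}_{DH}(2P^+)\right>V=0$ (because $\mathrm{bar}_{DH}(2P^+)-4\rho\in\Xi\subset\mathfrak{a}_{ss}^*$ and $\left<4\rho,\xi_\infty\right>=0$), I obtain for every $t<1$ the vanishing
\[
\int_{\mathfrak{a}^+}\frac{\partial u_{\mathrm{ref}}}{\partial\xi_\infty}\,e^{-\nu_t}\,dx=0.
\]
Letting $t=t_i\to t_\infty$, the reasoning of Proposition~\ref{limu} adapted to the fixed direction $\xi_\infty$ shows this integral converges to $v(\xi_\infty)V$, which is strictly positive since $v(\xi_\infty)>\left<4\rho,\xi_\infty\right>=0$ by the interior condition. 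The contradiction rules out Case~2, so no such sequence $(t_i)$ can exist, and Lemma~\ref{lem_est_m_x} combined with Proposition~\ref{linearGrowth} then closes the continuity method.

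The main obstacle, as the case split suggests, will be handling the purely toric blow-up direction $\xi_\infty\in\mathfrak{a}_t$, where Proposition~\ref{limiteqn} is silent. The key new input is the fixed-direction variant of Proposition~\ref{limu}, which requires exploiting the concentration of $e^{-\nu_t}$ near $x_t$ together with the convexity-based limit $\partial u_{\mathrm{ref}}/\partial\xi_\infty\to v(\xi_\infty)$ as one moves to infinity along $\xi_\infty$. Both cases ultimately depend on the strictness of the inclusion $\mathrm{bar}_{DH}(2P^+)-4\rho\in\Xi$ and of $4\rho\in\mathrm{Int}(2P)$.
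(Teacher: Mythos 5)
Your proposal is correct and takes essentially the same route as the paper's: the identity from Proposition~\ref{limiteqn} forces $\xi_\infty\in\mathfrak{a}_t$ (you phrase this as a case split, while the paper derives it directly from the sign of both sides), and the remaining toric direction is ruled out by pairing the vanishing integral of Proposition~\ref{zeroInt} with the fixed test vector $\xi_\infty$ and showing $\int(\partial u_{\mathrm{ref}}/\partial\xi_\infty)\,e^{-\nu_t}$ stays bounded away from zero. Your claim that this integral converges to $v(\xi_\infty)V$ is a mild strengthening of the paper's lower bound $\geq m/4$, but both rest on the same ingredients, namely the concentration of $e^{-\nu_t}$ near $x_t$ via Lemma~\ref{Ball} together with the asymptotics of $\nabla u_{\mathrm{ref}}$ in the direction $\xi_\infty$.
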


\begin{proof}
Assume that $X$ admits no Kähler-Einstein metric. 
Then Proposition~\ref{limiteqn} gives 
\[
t_{\infty} \left<\mathrm{bar}_{DH}(2P^+)-4 \rho ,\xi_{\infty}\right>
= (t_{\infty}-1)(v(\xi_{\infty})-\left<4\rho,\xi_{\infty}\right>).
\]
In particular, since $v$ is the support function of $2P$ and $2\rho \in \mathrm{Int}(P)$, and 
$0<t_{\infty} \leq 1$, we have 
\[
\left<\mathrm{bar}_{DH}(2P^+)-4 \rho ,\xi_{\infty}\right> \leq 0.
\]

Assume that $\mathrm{bar}_{DH}(2P^+) \in 4\rho +\Xi$.
Then by the definition of $\Xi$, the only possibility is that 
$\xi_{\infty} \in \mathfrak{a}_{t}$ and $\left<\mathrm{bar}_{DH}(2P^+)-4 \rho ,\xi_{\infty}\right>=0$.

To prove that this is impossible, consider the vanishing 
\[
\int_{\mathfrak{a}^+}\frac{\partial \nu_t}{\partial \xi_{\infty}}e^{-\nu_t}dx=0.
\]
The difference with what we have done before is that we fix $\xi_{\infty}$ instead 
of considering $\xi_t$. 

Since $\xi_{\infty} \in \mathfrak{a}_{t}$, we have 
$\frac{\partial j}{\partial \xi_{\infty}}=0$ and so we deduce from the vanishing 
of the integral the following equality, valid for $t<t_{\infty}$.
\[
t\int_{\mathfrak{a}^+}\frac{\partial u_t}{\partial \xi_{\infty}} e^{-\nu_t}dx
=(t-1)\int_{\mathfrak{a}^+}\frac{\partial u_{\mathrm{ref}}}{\partial \xi_{\infty}} e^{-\nu_t}dx
\]
The left hand side term is zero because we assumed 
\[
0=\left<\mathrm{bar}_{DH}(2P^+)-4 \rho ,\xi_{\infty}\right>=\left<\mathrm{bar}_{DH}(2P^+) ,\xi_{\infty}\right>.
\]

We thus have, for all $t<t_{\infty}$, 
\[
\int_{\mathfrak{a}^+}\frac{\partial u_{\mathrm{ref}}}{\partial \xi_{\infty}} e^{-\nu_t}dx=0.
\]
This is a contradiction: 
let $m:= \mathrm{min}\{v(\xi)~;~\xi \in \mathfrak{a}, |\xi|=1\}>0$. 
For any $\delta>0$ fixed, there exists an $\epsilon>0$ such that 
if $t_{\infty}-\epsilon<t<t_{\infty}$, 
$\frac{\partial u_{\mathrm{ref}}}{\partial \xi_{\infty}}\geq m/2$ on 
$B(x_t,\delta)$. This is because $|x_t|$ goes to $\infty$ and $u_{\mathrm{ref}}$ 
is asymptotic to $v$. 
Choose now $\delta=\delta(m/4)$ given by Lemma~\ref{Ball}, then for $t$ close 
to $t_{\infty}$, we obtain 
\[
\int_{\mathfrak{a}^+}\frac{\partial u_{\mathrm{ref}}}{\partial \xi_{\infty}} e^{-\nu_t}dx
\geq m/4>0.
\qedhere
\]
\end{proof}

Combined with the obstruction proved earlier, it ends the proof of Theorem~A.

\begin{exa}
Consider the example $X_1$, which we recall is the wonderful compactification of 
$\mathrm{PGL}_3(\mathbb{C})$. The corresponding root system is $A_2$.
We denote by $\alpha_1$ and $\alpha_2$ the simple roots.
The third positive root is then $\alpha_1 + \alpha_2$, and $2\rho=2(\alpha_1+\alpha_2)$.
For $p=x\alpha_1 + y\alpha_2$,
\[
\prod_{\alpha\in \Phi^+} \left<\alpha , p\right>^2 = (x-y/2)^2(-x/2+y)^2(x/2+y/2)^2.
\]

We computed the coordinates $x$ and $y$ of the barycenter $\mathrm{bar}_{DH}(P^+)$
and obtained 
\[
\mathrm{bar}_{DH}(P^+) = \frac{24641}{9888} (\alpha_1+\alpha_2).
\]
As a consequence, $X_1$ admits a Kähler-Einstein metric.
Figure~\ref{BarX1} gives a representation of $P^+$, where the cross is the barycenter, 
and the convex cone delimited by the dashed lines is $2\rho + \Xi$.

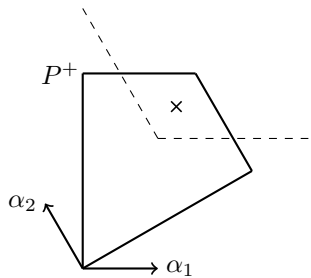
\begin{figure}
\centering
\begin{tikzpicture}[scale=1]
\draw[thick] (0,0) -- (9/4,3*1.73/4);
\draw[thick] (9/4,3*1.73/4) -- (3/2,3*1.73/2);
\draw[thick] (3/2,3*1.73/2) -- (0,3*1.73/2);
\draw[thick] (0,3*1.73/2) -- (0,0);
\draw (-0.3,3*1.73/2) node {$P^+$};
\draw[->, thick] (0,0) -- (1,0);
\draw (1.3,0) node {$\alpha_1$};
\draw[->, thick] (0,0) -- (-1/2,1.73/2);
\draw (-1/2-0.3,1.73/2) node {$\alpha_2$};
\draw[dashed] (1,1.73) -- (3,1.73);
\draw[dashed] (1,1.73) -- (0,2*1.73);
\draw[semithick] (2.49/2-0.07,1.73*2.49/2-0.07) -- (2.49/2+0.07,1.73*2.49/2+0.07);
\draw[semithick] (2.49/2+0.07,1.73*2.49/2-0.07) -- (2.49/2-0.07,1.73*2.49/2+0.07);
\end{tikzpicture}
\caption{Barycenter for $X_1$}
\label{BarX1}
\end{figure}

\end{exa}

\subsection{Lower bound on $R(X)$}
\label{sec_lower}

Assume first that $t_{\infty} <1$, then we can write 
\[
\frac{t_{\infty}}{t_{\infty}-1} \left<\mathrm{bar}_{DH}(2P^+)-4 \rho ,\xi_{\infty}\right> 
= v(\xi_{\infty})-\left<4\rho,\xi_{\infty}\right>,
\]
or 
\[
\left<4\rho + \frac{t_{\infty}}{1-t_{\infty}} (-\mathrm{bar}_{DH}(2P^+) 
				+4 \rho ),\xi_{\infty}\right>
= v(\xi_{\infty}).
\]

For $t=0$, we have $4\rho \in \mathrm{Int}(2P)$. 
The function $t\mapsto t/(1-t)$ is strictly increasing and its image is $[0,\infty[$.
Besides, since $v$ is the support function of $2P$, the value $v(\xi_{\infty})$ is attained 
by $\left< m , \xi_{\infty} \right>$ if and only if $m$ is in the supporting hyperplane of $2P$ 
defined by $\xi_{\infty}$.
We deduce that necessarily $t_{\infty}$ is the unique value of $t$ for which 
\[
4\rho + \frac{t}{1-t} (-\mathrm{bar}_{DH}(2P^+) +4 \rho) \in \partial(-\Xi + 2P^+),
\]
if it exists. If it doesn't exist, then $t_{\infty}=1$. 

Combining this with the upper bound on $R(X)$, we have proved Theorem~C.

\begin{exa}
This allows to compute exactly the greatest Ricci lower bound for $X_2$,
which is 
\[
R(X_2)=\frac{1046175339}{1236719713}.
\]
\end{exa}

\bibliographystyle{alpha}
\bibliography{biblio}

\end{document}